\documentclass[12pt,a4paper]{amsart}
\usepackage{amssymb}
\usepackage{amsfonts}
\usepackage{amsthm}
\usepackage{amsmath}
\usepackage{amscd}
\usepackage[latin2]{inputenc}
\usepackage{t1enc}
\usepackage[mathscr]{eucal}
\usepackage{indentfirst}
\usepackage{graphicx}
\usepackage{graphics}
\numberwithin{equation}{section}
\usepackage[margin=2.9cm]{geometry}
\usepackage{epstopdf} 
\usepackage{xcolor}
 
\usepackage{verbatim}
 \usepackage{enumitem}
\usepackage{tikz}
\usepackage[percent]{overpic}

\theoremstyle{plain}
\newtheorem{theorem}{Theorem}[section]
\newtheorem{lemma}[theorem]{Lemma}
\newtheorem{corollary}[theorem]{Corollary}
\newtheorem{proposition}[theorem]{Proposition}

 \theoremstyle{definition}
 \newtheorem{condition}[theorem]{Condition}

\newtheorem{remark}[theorem]{Remark}
\newtheorem{?}[theorem]{Problem}

\title[Inverse Problems for Single Photons]{Inverse Scattering for Single Photons in Quantum Optics}

\author{}
\address{}
\email{}

\date{}

\subjclass{Primary 35R30; Secondary 35Q40, 35P25, 81V80}

\keywords{inverse scattering, source-to-solution map, fractional Laplacian, scattering amplitude, X-ray transform, one-photon model, quantum optics}

\author[Covi]{Giovanni Covi}
\address{Department of Mathematics and Statistics, University of Jyv\"askyl\"a, Jyv\"askyl\"a, Finland}
\email{giovanni.g.covi@jyu.fi}

\author[Lassas]{Matti Lassas}
\address{Department of Mathematics and Statistics, University of Helsinki, Helsinki, Finland}
\email{matti.lassas@helsinki.fi}

\author[Nursultanov]{Medet Nursultanov}
\address{Qazaq AI Research University, Astana, Kazakhstan}
\email{medet.nursultanov@gmail.com}

\author[Oksanen]{Lauri Oksanen}
\address{Department of Mathematics and Statistics, University of Helsinki, Helsinki, Finland}
\email{lauri.oksanen@helsinki.fi}

\author[Salo]{Mikko Salo}
\address{Department of Mathematics and Statistics, University of Jyv\"askyl\"a, Jyv\"askyl\"a, Finland}
\email{mikko.j.salo@jyu.fi}

\author[Schotland]{John C. Schotland }
\address{Department of Mathematics and Department of Physics, Yale University, New Haven, CT, USA}
\email{john.schotland@yale.edu}

\begin{document}

\begin{abstract}
We study inverse problems for a time-harmonic one-photon model describing the interaction of a single photon with a medium of stationary two-level atoms.
After time-harmonic reduction, the unknown compactly supported atomic density
appears as a frequency-dependent potential in a scattering equation for the half Laplacian. We prove high-frequency uniqueness results for three types of intensity data: source-driven measurements, renormalized far-field
intensity measurements, and phaseless far-field measurements obtained from
coherent superpositions of incident plane waves. In each case, the
corresponding data, given at all sufficiently large frequencies, determine
the atomic density uniquely; the source-driven result requires a geometric
visibility condition on the source and observation sets.
\end{abstract}

\maketitle
\tableofcontents

\section{Introduction}
\subsection{Physical motivation and the model}

This paper is concerned with inverse problems for a quantum model of
light--matter interactions. The physical setting is a single photon propagating
through a medium of stationary two-level atoms, described by an atomic number
density $\rho(x)$ supported in a bounded region of $\mathbb R^n$. The photon
may be absorbed, exciting an atom, and may subsequently be re-emitted. The
resulting cooperative dynamics is a basic many-body process in quantum optics
and is closely related to phenomena such as single-photon superradiance and
subradiance~\cite{Dicke_1954,Gross_1982,Kraisler_Schotland}.

A continuum real-space formulation of this problem was introduced in~\cite{Kraisler_Schotland}. In the one-photon sector, the photon and atomic amplitudes satisfy the coupled system
\begin{equation}\label{eq:dynamical_intro}
    \begin{cases}
        i\partial_t \psi = c(-\Delta)^{1/2}\psi + g\,\rho\, a, \\[2pt]
        i\partial_t a = g\,\psi + \Omega\, a.
    \end{cases}
\end{equation}
Here $\psi(x,t)$ is the photon amplitude, $a(x,t)$ is the atomic excitation
amplitude, $\Omega>0$ is the atomic resonance frequency, $g>0$ is the
atom--field coupling constant, and $c>0$ is the speed of light. The operator
$(-\Delta)^{1/2}$ reflects the linear photon dispersion relation
$\omega_{\mathbf k}=c|\mathbf k|$. The atomic density $\rho$ enters the
system through the coupling term and is the unknown coefficient in the inverse problems considered below. The present work complements our earlier study of a time-dependent inverse problem for entangled two-photon states~\cite{LNOS_two_photon}.

\subsection{Inverse problems and main results}

The purpose of the present paper is to initiate the study of inverse problems
for the time-harmonic form of \eqref{eq:dynamical_intro}. We set
$c=1$, which amounts to a rescaling of time, and look for monochromatic
solutions of the form
\begin{equation*}
    \psi(x,t)=e^{-ikt}\psi(x),
    \qquad
    a(x,t)=e^{-ikt}a(x),
\end{equation*}
where $k>0$ is the frequency. This gives the stationary system
\begin{equation}\label{eq:intro_stationary_system}
    \begin{cases}
        k\psi = (-\Delta)^{1/2}\psi + g\rho a,\\
        ka = g\psi+\Omega a.
    \end{cases}
\end{equation}
For $k\neq\Omega$, the second equation determines $a$ algebraically in
terms of $\psi$, and substitution into the first one yields the equation
\begin{equation}\label{eq:intro_Pk}
    P_k\psi :=
    \left(
        (-\Delta)^{1/2}-k+\frac{g^2}{k-\Omega}\rho
    \right)\psi = 0.
\end{equation}
Thus the unknown atomic density $\rho$ enters as an effective potential in a
fractional equation. We assume throughout that
$\rho\in C_c^\infty(\mathbb R^n)$ is non-negative and not identically zero,
and we write $\Sigma=\operatorname{supp}(\rho)$.

The inverse problem we consider is to determine $\rho$ from measurements of the
photon field $\psi$. A distinctive feature of the present model is that
the physically natural data are of \emph{intensity-type}. Photodetection
gives access to probability densities such as $|\psi|^2$, but the complex
phase of $\psi$ is not directly measurable. The inverse problems we
consider are therefore phaseless: the unknown $\rho$ must be reconstructed
from squared moduli of the photon amplitude.

We study three types of intensity measurements, described informally in the following. The first is a near-field measurement generated by localized sources. Sources are placed outside the
support of $\rho$, and the resulting intensity is observed in another
exterior region. The question is whether these source-to-intensity data
determine the atomic density.

The second and third measurements are taken in the far-field. In the second case, for each illumination the medium is probed by a single incident
plane wave, with the incident direction varying over $\mathbb{S}^{n-1}$, and one measures a suitably renormalized intensity of the total field at infinity. Although the total intensity tends to the background value at large distance,
its next-order asymptotic term contains information about the scattering
amplitude.

In the third case, the incident field is a coherent superposition of two plane
waves with several prescribed relative phases. The corresponding phaseless
far-field measurements allow us to recover certain mixed products of scattering
amplitudes, which are sufficient for uniqueness.

The precise definitions of these three data sets are given in
Section~\ref{sec:formulation}. Our main results, stated there as
Theorems~\ref{thm:source_recovery},
\ref{thm:intensity_determines_rho}, and
\ref{thm:phaseless_superposed}, show that each data set determines
$\rho$ uniquely at all sufficiently large frequencies. The source-driven
result requires a geometric visibility condition on the source and
observation sets; the two far-field results do not.

\subsection{Far-field measurements}
A common feature of the three inverse problems is that the data are
intensities: photodetection measures $|\psi|^2$, and not the phase of
$\psi$ directly. It is therefore natural, in a far-field scattering setting,
to compare the present data with the usual phaseless far-field pattern
$|A_k(\theta,\alpha)|^2$. In that formulation the phase of the scattering
amplitude is lost, and uniqueness typically requires additional structure,
such as reference scatterers, superpositions of incident waves, or
measurements at many frequencies.
The second inverse problem in this paper is based on a different observation.
The measured quantity is the intensity of the total field
\begin{equation*}
    \psi^{\mathrm{in}}_{k,\alpha}
    +
    \psi^{\mathrm{sc}}_{k,\alpha},
\end{equation*}
where the incident plane wave $\psi^{\mathrm{in}}_{k,\alpha}(x)=e^{ik\alpha\cdot x}$ with $\alpha \in \mathbb{S}^{n-1}$ is known. Thus
\begin{equation*}
    \left|
        \psi^{\mathrm{in}}_{k,\alpha}
        +
        \psi^{\mathrm{sc}}_{k,\alpha}
    \right|^2
    =
    1
    +
    2\,\Re\left(
        \overline{\psi^{\mathrm{in}}_{k,\alpha}}\,
        \psi^{\mathrm{sc}}_{k,\alpha}
    \right)
    +
    \left|\psi^{\mathrm{sc}}_{k,\alpha}\right|^2 .
\end{equation*}
The cross term is an interference term between the known incident wave and
the outgoing scattered wave. After subtracting the background intensity and
applying the natural far-field renormalization, which we encode in a
renormalized total-field intensity $\mathcal I_\rho$
(defined in Section~\ref{sec:formulation}), this interference term produces an
oscillatory quantity, oscillating in the radial variable at a frequency fixed
by the incident and observation directions. More precisely, for non-forward
directions $\theta\neq\alpha$, we prove that
\begin{equation*}
    A_k(\theta,\alpha)
    =
    \lim_{R\to\infty}
    \frac{1}{R}
    \int_R^{2R}
    \mathcal I_\rho(\alpha,k,\theta,r)\,
    e^{-ikr(1-\alpha\cdot\theta)}\,dr .
\end{equation*}
Thus, the asymptotic behaviour of the renormalized total-field intensity
determines the full complex scattering amplitude $A_k(\theta,\alpha)$.
In this sense, the second inverse problem is closer to an interferometric
measurement model than to the standard phaseless far-field pattern problem:
the known incident wave serves as a reference field, and no separate
phase-retrieval step is needed once the renormalized asymptotic intensity is
known. The third inverse problem, by contrast, uses genuinely phaseless far-field
data. There the measured quantities are squared moduli
$|A_k^{(a)}|^2$ of far-field amplitudes corresponding to superpositions of
two incident plane waves. Combining the four relative phases
$a\in\{1,-1,i,-i\}$ recovers the mixed products
\begin{equation*}
    A_k(\theta,\alpha_2)\overline{A_k(\theta,\alpha_1)},
\end{equation*}
which are then used, through their high-frequency asymptotics, to determine
$\rho$.

\subsection{Related work}

While inverse problems for the one-photon model~\eqref{eq:dynamical_intro}
have not been considered before (to the best of our knowledge), the half-Laplacian and, more
generally, the fractional Laplacians $(-\Delta)^s$ with $s\in(0,1)$ have
received considerable attention in the inverse problems literature.

In the Calder\'on-type setting, Ghosh, Salo,
and Uhlmann~\cite{Ghosh_Salo_Uhlmann_2020} proved that exterior
Dirichlet-to-Neumann data determine the potential in
\begin{equation*}
    (-\Delta)^s u+q u=0,\qquad 0<s<1.
\end{equation*}
This line of work relies on the unique continuation and Runge approximation
properties of the fractional Laplacian, and has since been developed in many
directions; see, for example,
\cite{Ruland_Salo_2020,Cekic_Lin_Ruland_2020,Covi_2020,
Ghosh_Ruland_Salo_Uhlmann_2021}. The present paper is different both in
setting and in data: we work on all of $\mathbb R^n$ in a scattering
framework, and the data are high-frequency intensity measurements rather
than exterior Dirichlet-to-Neumann data.

A closer scattering-theoretic direction concerns the half Laplacian and its
fractional generalizations. For the time-dependent problem, scattering and
inverse scattering have been studied, for example,
in~\cite{Kitada2010,Ishida2020,IshidaWada2020}; see
also~\cite{Jung1997} for the related Dirac equation. For further recent results on fractional Helmholtz
equations, we refer
to~\cite{CovideHoopSalo2025,ZilberbergCakoniVogelius}. In the
stationary setting, Uhlmann and Wang~\cite{Uhlmann_Wang_2025} recover the
asymptotic behaviour of polyhomogeneous potentials at infinity from
fixed-energy scattering data. In contrast, we recover the
compactly supported coefficient $\rho$, which enters through the
frequency-dependent  potential $V_k = g^2\rho/(k-\Omega)$, from intensity data at all sufficiently large frequencies.

Classical inverse scattering for Schr\"odinger operators provides another
relevant context. For $-\Delta+V$, high-frequency Born approximation methods
show that the leading term of the scattering amplitude determines the
Fourier transform of the potential; see, for instance,
Saito~\cite{Saito_1982}. In phaseless inverse scattering, uniqueness from
phaseless data often requires additional information such as reference
sources, background scatterers, superpositions of incident waves, or
varying-frequency measurements; see, for example,
\cite{Klibanov_2014,Klibanov_Romanov_2016,Novikov_2015,Novikov_2016,
Ivanyshyn_Kress_2010}. The superposition mechanism is particularly close to
our third inverse problem: by using two incident plane waves with four
prescribed relative phases, we recover mixed products of scattering
amplitudes, which suffice to determine $\rho$.

\subsection{Organization of the paper}

The paper is organized as follows. In Section~\ref{sec:formulation} we give
the precise formulation of the three inverse problems and state the main
uniqueness results. Section~\ref{sec:direct} develops the direct theory for
$P_k$, including the uniform free resolvent estimate, the limiting absorption
principle for the perturbed operator, and the far-field expansion. In
Section~\ref{sec:source} we prove the source-driven uniqueness result by
reducing the problem to the recovery of line integrals of $\rho$ and then to
the injectivity of the X-ray transform. Section~\ref{sec:farfield} proves
the two far-field uniqueness results, first for renormalized far-field
intensity data and then for phaseless superposition data. The appendix contains the technical estimates used in the geometric-optics approximation.

\subsection*{Acknowledgements}
G.C.\ and M.S.~were partly supported by Research Council of Finland (Centre of Excellence in Inverse Modelling and Imaging and FAME Flagship, grants 353091 and 359208). M.L.\ was partially supported by the Advanced Grant project 101097198 of the European Research Council, Centre of Excellence of Research Council of Finland (grant 336786) and the FAME flagship of the Research Council of Finland (grant 359186). L.O. and M.N. were supported by the European Research Council of the European Union, grant 101086697 (LoCal),
and the Research Council of Finland, grants 347715,
353096 (Centre of Excellence of Inverse Modelling and Imaging)
and 359182 (Flagship of Advanced Mathematics for Sensing Imaging and Modelling).
M.L. and L.O  were partially supported by Finnish Quantum Flagship. J.C.S was partially supported by Simons Foundation grant MPS-TSRG-00023182-04.
The views and opinions expressed are those of the authors only and do not necessarily reflect those of the funding agencies or the EU.

\section{Formulation of the inverse problems and main results}\label{sec:formulation}

Let $n\geq 2$ and let $\rho\in C_c^\infty(\mathbb{R}^n)$ be a non-negative,  non-zero function with
\begin{equation*}
    \Sigma=\operatorname{supp}(\rho)\subset \mathbb{R}^n.
\end{equation*}
Let $g,\Omega>0$ be fixed constants and let $k>0$ satisfy $k\neq \Omega$.
Eliminating $a$ from the stationary one-photon system
\begin{equation*}
    \begin{cases}
        k\psi = (-\Delta)^{1/2}\psi + g\rho a,\\
        k a = g\psi + \Omega a,
    \end{cases}
\end{equation*}
yields a single equation for $\psi$:
\begin{equation}\label{eq:Pk}
    P_k\psi :=
    \left((-\Delta)^{1/2}-k+\frac{g^2}{k-\Omega}\rho\right)\psi = 0.
\end{equation}
We also use the notation
\begin{equation*}
    P_k = P_k^0 + V_k,
    \qquad
    P_k^0 = (-\Delta)^{1/2}-k,
    \qquad
    V_k = \frac{g^2}{k-\Omega}\rho.
\end{equation*}

In this paper we consider three inverse problems associated with \eqref{eq:Pk}.
The first one is a source-driven near-field problem. The second and third ones
are far-field problems formulated in terms of intensity-type data.

\subsection{Inverse Problem I: Source-driven near-field data}
Let $S\subset \mathbb{R}^n$ be a non-empty open set such that $S\cap \Sigma=\varnothing$.
Given $f\in C_c^\infty(S)$, we consider the forced problem
\begin{equation}
\label{eq:forced}
P_k\psi = f \quad \text{in }\mathbb{R}^n.
\end{equation}
By Lemma~\ref{lem_LAP}, there exists $k_\rho>0$ such that, for each
$k>k_\rho$, problem \eqref{eq:forced} has a unique outgoing solution in
the sense of the limiting absorption principle. We denote this solution
by $\psi^f$.

Let $W\subset \mathbb{R}^n$ be a non-empty open set with $W\cap \Sigma=\varnothing$.
For each $k>k_\rho$, the corresponding measurement map is
\begin{align*}
\Lambda_k^{\mathrm{src}}: C_c^\infty(S) &\to C^0(W),\\
\Lambda_k^{\mathrm{src}}(f) &= \left|\psi^f\big|_{W}\right|^2.
\end{align*}
The first inverse problem is to determine the density $\rho$ from the knowledge of the family of measurement maps $\{\Lambda_k^{\mathrm{src}}\}$ for all sufficiently large $k$.

\begin{remark}\label{rem:disjointness}
The assumptions $S\cap\Sigma=\varnothing$ and $W\cap\Sigma=\varnothing$ are
made only for convenience: since $\Sigma$ is closed, one may replace $S$ and
$W$ by the open sets $S\setminus\Sigma$ and $W\setminus\Sigma$, and all the
results below remain valid provided the latter sets satisfy the conditions
imposed on $S$ and $W$.
\end{remark}

\subsection{Inverse Problem II: Renormalized far-field intensity data}

For $\alpha\in\mathbb{S}^{n-1}$, we consider the incident plane wave
\begin{equation*}
    \psi^{\mathrm{in}}_{k,\alpha}(x)=e^{ik\alpha\cdot x}.
\end{equation*}
The corresponding total field is written as
\begin{equation}\label{eq:total_field}
    \psi_{k,\alpha}
    =
    \psi^{\mathrm{in}}_{k,\alpha}
    +
    \psi^{\mathrm{sc}}_{k,\alpha},
\end{equation}
where the scattered field $\psi^{\mathrm{sc}}_{k,\alpha}$ is the unique outgoing
solution of
\begin{equation}\label{eq:sc_equation}
    P_k\psi^{\mathrm{sc}}_{k,\alpha}
    =
    -V_k\psi^{\mathrm{in}}_{k,\alpha}
    \quad \text{in }\mathbb{R}^n.
\end{equation}
The right-hand side belongs to $C_c^\infty(\mathbb{R}^n)$. By Lemma~\ref{lem_LAP}, for every admissible density $\rho$ there exists
$k_\rho>0$ such that, for all $k>k_\rho$, equation
\eqref{eq:sc_equation} has a unique outgoing solution in the sense of the
limiting absorption principle. Consequently, the total field
$\psi_{k,\alpha}$ satisfies
\begin{equation*}
    P_k\psi_{k,\alpha}=0
    \quad \text{in }\mathbb{R}^n.
\end{equation*}

As shown later in Lemma~\ref{lem:farfield_expansion}, the total field admits
the far-field expansion
\begin{equation}\label{eq:farfield}
    \psi_{k,\alpha}(r\theta)
    =
    e^{ik\alpha\cdot(r\theta)}
    +
    r^{-(n-1)/2}e^{ikr}A_k(\theta,\alpha)
    +
    o(r^{-(n-1)/2}),
    \qquad r\to\infty,
\end{equation}
where $\theta\in\mathbb{S}^{n-1}$. The coefficient $A_k(\theta,\alpha)$ is the scattering amplitude.

In the far-field regime, the directly measurable quantity is the intensity $|\psi_{k,\alpha}(r\theta)|^2$. Using \eqref{eq:farfield}, we obtain
\begin{equation}\label{eq:intensity_expansion_justification}
    |\psi_{k,\alpha}(r\theta)|^2
=
1
+
2r^{-(n-1)/2}
\Re\Bigl(
e^{ikr(1-\alpha\cdot\theta)}A_k(\theta,\alpha)
\Bigr)
+
o\bigl(r^{-(n-1)/2}\bigr),
\qquad r\to\infty.
\end{equation}
In particular, $|\psi_{k,\alpha}(r\theta)|^2 \to 1$ as $r\to\infty$, so the limit itself carries no information about $\rho$. The relevant object is the asymptotic behaviour at the next order, namely
\begin{equation*}
    2r^{-(n-1)/2}
    \Re\Bigl(
        e^{ikr(1-\alpha\cdot\theta)}A_k(\theta,\alpha)
    \Bigr).
\end{equation*}
Accordingly, we introduce the renormalized intensity
\begin{equation}\label{eq:renormalized_intensity}
    \mathcal I_\rho(\alpha,k,\theta,r)
    :=
   r^{(n-1)/2}
    \Bigl(
        |\psi_{k,\alpha}(r\theta)|^2-1
    \Bigr),
\end{equation}
which by \eqref{eq:intensity_expansion_justification} satisfies
\begin{equation}\label{eq:Irho_asymptotics}
    \mathcal I_\rho(\alpha,k,\theta,r)
    =
    2\Re\Bigl(
        e^{ikr(1-\alpha\cdot\theta)}A_k(\theta,\alpha)
    \Bigr)
    +
    o(1),
    \qquad r\to\infty.
\end{equation}

The data in the second inverse problem are not the values of
$\mathcal I_\rho$ at finite radii. Rather, they are its asymptotic behaviour as
$r\to\infty$, modulo terms that vanish at infinity.

We say that two renormalized far-field intensities $\mathcal I_{\rho_1}$ and
$\mathcal I_{\rho_2}$ are equivalent at infinity, and write
\begin{equation*}
\mathcal I_{\rho_1}\sim_\infty \mathcal I_{\rho_2},
\end{equation*}
if there exists $K>0$ such that
\begin{equation}\label{eq:equivalence_relation}
\mathcal I_{\rho_1}(\alpha,k,\theta,r)
-
\mathcal I_{\rho_2}(\alpha,k,\theta,r)
\to 0
\qquad \text{as } r\to\infty
\end{equation}
for every $k>K$ and every $\alpha,\theta\in\mathbb{S}^{n-1}$ with
$\theta\neq\alpha$. We denote the corresponding asymptotic equivalence class by $[\mathcal I_\rho]_\infty$.

The second inverse problem is to determine the density $\rho$ from the knowledge of the asymptotic equivalence
class $[\mathcal I_\rho]_\infty$ for all sufficiently large $k$.

\subsection{Inverse Problem III: Phaseless far-field data from superposed waves}

In the third inverse problem, we consider phaseless far-field measurements arising from coherent superpositions of two plane waves.

Let $k>k_\rho$, $\alpha_1,\alpha_2,\theta\in\mathbb{S}^{n-1}$, and $a\in\mathbb{C}$. We consider the incident field
\begin{equation*}
    \psi^{\mathrm{in},a}_{k,\alpha_1,\alpha_2}(x)
    =
    e^{ik\alpha_1\cdot x}
    +
    a\,e^{ik\alpha_2\cdot x}.
\end{equation*}
Let $\psi^a_{k,\alpha_1,\alpha_2}$ denote the corresponding total field, and let $A_k^{(a)}(\theta,\alpha_1,\alpha_2)$ denote the corresponding scattering amplitude defined via the analogue of \eqref{eq:farfield}. Since the scattering problem is linear in the incident field,
\begin{equation}\label{eq:superposition_amplitude}
    A_k^{(a)}(\theta,\alpha_1,\alpha_2)
    =
    A_k(\theta,\alpha_1)
    +
    a\,A_k(\theta,\alpha_2).
\end{equation}

The measurement maps are
\begin{align*}
    \mathcal M_k:
    \mathbb S^{n-1}\times\mathbb S^{n-1}
    \times \{1,-1,i,-i\}
    \times \mathbb S^{n-1}
    &\to \mathbb R,\\
    \mathcal M_k(\alpha_1,\alpha_2,a,\theta)
    &=
    \left|A_k^{(a)}(\theta,\alpha_1,\alpha_2)\right|^2.
\end{align*}

The third inverse problem is to determine the density $\rho$ from the knowledge
of the measurement maps $\mathcal M_k$ for all sufficiently large $k$.

\subsection{Main results}
We now state the main uniqueness results proved in this paper. The first result
concerns the source-driven near-field data. It requires the following geometric
condition on the source set, the observation set, and the support of the density.

For $z,p\in\mathbb{R}^n$, we denote by $[z,p]$ the closed line segment
joining $z$ to $p$. For $z\in\mathbb{R}^n$ and a set
$A\subset\mathbb{R}^n$, we define
\begin{equation*}
    (z;A] := \bigcup_{p\in A}[z,p]\setminus\{z\}.
\end{equation*}

\begin{condition}\label{cond:geometric}
There exist points $x,y\in S$ and a non-empty open set $W'\subset W$ such that
\begin{equation*}
    \Sigma \subset (x;W']
    \qquad
    \text{and}
    \qquad
    \Sigma \cap (y;W'] = \varnothing.
\end{equation*}
\end{condition}

Geometrically, $W'$ acts as a screen behind $\Sigma$ as viewed from $x$, while
from the reference source $y$ the same screen is reached without crossing
$\Sigma$.

\begin{theorem}[Determination from source-to-solution data]
\label{thm:source_recovery}
Assume that $S$ is open, $\Sigma\cap(S\cup W)=\varnothing$, and that
Condition~\ref{cond:geometric} holds. Then the source-to-solution intensity
data $\{\Lambda_k^{\mathrm{src}}\}$ for all sufficiently large $k$ uniquely determine $\rho$.
\end{theorem}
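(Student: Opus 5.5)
The proof reduces the intensity data to line integrals of $\rho$ via a high-frequency geometric-optics expansion, and then invokes the injectivity of the X-ray transform. Fix the point $x\in S$ from Condition~\ref{cond:geometric} and take sources $f_{h}\in C_c^\infty(S)$ concentrating at $x$; in the limit these give point sources. Since the free outgoing resolvent of $(-\Delta)^{1/2}-k$ has a kernel whose leading part at high frequency behaves like the Helmholtz outgoing kernel, we have $G_k^0(z,p)\sim c_n k^{(n-1)/2}|z-p|^{-(n-1)/2}e^{ik|z-p|}$ for $|z-p|\gtrsim 1/k$. The key steps, in order, are as follows.

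\emph{Step 1: geometric-optics ansatz.} For a source near $x$ and an observation point $p\in W'$, construct an approximate solution of $P_k\psi=f$ of the form
\[
\psi\approx G^0_k(\cdot,x)\exp\Bigl(-\tfrac{i g^2}{k-\Omega}\int_{[x,\cdot]}\rho\Bigr).
\]
This form comes from the eikonal/transport equations for the half Laplacian: along rays, $(-\Delta)^{1/2}$ acts on $e^{ik\phi}b$ to leading order as $k|\nabla\phi|b - i\,\nabla\phi\cdot\nabla b+\dots$. The phase correction $\frac{g^2}{k-\Omega}\int\rho$ is therefore $O(1/k)$. I would instead rescale: we need the amplitude $\int_{[x,p]}\rho$ at order $1/k$, i.e. $\psi^f=G^0_k(\cdot,x)\bigl(1-\tfrac{ig^2}{k}\int_{[x,p]}\rho+O(k^{-1-\delta})\bigr)$ relative to the leading term. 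Because the potential is real, the modulus alone does not see the first-order phase. This is exactly why the reference source $y$ is needed.

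\emph{Step 2: interference with the reference source.} Use sources $f=f_x+a f_y$ with $a\in\{1,-1,i,-i\}$, both concentrating at points. Polarization of $|\psi^{f_x}+a\psi^{f_y}|^2$ on $W'$ recovers $\psi^{f_x}\overline{\psi^{f_y}}$ on $W'$. Since $\Sigma\cap(y;W']=\varnothing$, the field from $y$ at $p\in W'$ equals the free field up to an error $O(k^{-\infty})$: the multiple-scattering contribution is $R_k V_k G_k^0(\cdot,y)$, which is lower order because no ray from $y$ to $p$ meets $\Sigma$, and I would invoke the appendix estimates here. The free field is known, so we obtain $\psi^{f_x}(p)$ itself, with phase, for $p\in W'$. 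Comparing with the known free field $G^0_k(p,x)$ and extracting the coefficient of $k^{-1}$ in the ratio, with the limit taken over all large $k$, yields $\int_{[x,p]}\rho$ for every $p\in W'$. By openness we may also perturb $x$ slightly within $S$, since Condition~\ref{cond:geometric} is stable for nearby source points after shrinking $W'$ if necessary.

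\emph{Step 3: X-ray transform.} Since $\Sigma\subset(x;W']$, every line through $\Sigma$ that passes through a point near $x$ hits $W'$ beyond $\Sigma$, so segment integrals equal full line integrals. We therefore know the X-ray transform of $\rho$ on an open set of lines through points near $x$, including all lines through $x$ meeting $\Sigma$. Lines from an open set of vertices $x'$ near $x$ form an open set of lines covering the relevant directions. Since $\rho$ is compactly supported, uniqueness follows from the injectivity of the X-ray transform for such a family; I would argue via Helgason's support theorem or the local injectivity argument of Boman–Quinto, or, most simply, via the fan-beam data from an open set of points, which determines $\rho$ by the standard argument in $n=2$ applied to planes through $x$ when $n\ge3$.

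\emph{Main obstacle.} The hardest step is the rigorous justification of Step 1: a remainder estimate showing that $\psi^f-\psi_{\mathrm{GO}}=o(k^{-1})$ relative to the leading term, uniformly on $W'$. This requires the uniform resolvent bound and limiting absorption principle of Lemma~\ref{lem_LAP}, with $k$-dependence of order $\|R_k\|_{L^2_\delta\to L^2_{-\delta}}\lesssim 1$, together with a symbol calculus for $(-\Delta)^{1/2}$ applied to WKB phases. That calculus is nonlocal, so I would split the operator as $|\xi|=$ a smooth symbol away from $\xi=0$ plus a smoothing remainder, then apply the usual pseudodifferential expansion on the support of $\rho$, which lies away from the source singularity.
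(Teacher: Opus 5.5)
Your overall architecture matches the paper's. The $k^{-1}$ effect of $\rho$ is the pure phase factor $1-\frac{ig^2}{k-\Omega}\int\rho$, which is invisible in $|\psi|^2$. So you polarize against a reference source whose ray to the observation point misses $\Sigma$, read off the $k^{-1}$ coefficient, and conclude with X-ray uniqueness for lines through an open set of vertices.

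The genuine gap is Step~1. You flag it but do not resolve it, and with point sources it does not follow routinely from the estimates you cite:
\begin{itemize}
\item Pointwise control via Sobolev embedding needs $H^l$ with $l>n/2$, and the $H^l$ norms of waves oscillating at frequency $k$ grow like $k^l$.
\item If you instead bound the second Born term at $p$ by duality against $G^0_k(p,\cdot)$, which has size $k^{(n-1)/2}$ on $\Sigma$, you lose the transverse stationary-phase gain $k^{-(n-1)/2}$. You get only a relative bound $O(k^{(n-5)/2})$, which is not $o(k^{-1})$ once $n\ge 3$.
\end{itemize}
So the argument rests entirely on a semiclassical parametrix for the nonlocal operator $(-\Delta)^{1/2}-k+V_k$ with the spherical phase $|z-x|$, and that parametrix is only gestured at.

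There are also two smaller issues. First, a fixed non-oscillating bump $f_h$ does not radiate coherently at high frequency: off its support, the Helmholtz part $2k(-\Delta-k^2-i0)^{-1}f_h$ of $R_k^0f_h$ is $O((kh)^{-\infty})$ by non-stationary phase. So $h$ must be tied to $k$ with $kh\to0$ fast enough, or sent to $0$ at fixed $k$ after extending Lemma~\ref{lem_LAP} to negative Sobolev orders. Second, your claim in Step~2 that the field from $y$ is free up to $O(k^{-\infty})$ is false. $R_k^0$ contains the non-oscillatory piece $((-\Delta)^{1/2}+k)^{-1}$, whose kernel at fixed distance has size $k^{-2}$ (see the Poisson-kernel computation in the proof of Lemma~\ref{lem:farfield_expansion}). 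This is harmless, since only $o(k^{-1})$ is needed.

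The idea that removes the obstacle in the paper is to use modulated sources $f=e^{ik\alpha\cdot x}\chi_{z,\alpha}$ near $z\in S$, aimed along $\alpha=(x^*-z)/|x^*-z|$, and to conjugate everything by $e^{ik\alpha\cdot x}$. This buys three things:
\begin{itemize}
\item The conjugated free resolvent $\widetilde R_k$ is a Fourier multiplier bounded $H^l_\delta\to H^l_{-\delta}$ uniformly in $k$ and $\alpha$.
\item The conjugated fields are non-oscillatory, so Sobolev embedding costs nothing.
\item Proposition~\ref{lem:GO_ptwise} shows by direct Fourier analysis that $\widetilde R_k h=T_\alpha h+O(k^{-1})$ on compact sets, where $T_\alpha$ is the one-dimensional transport integral.
\end{itemize}
This yields Lemma~\ref{lem:Born_GO}. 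The first Born term equals $-e^{ik\alpha\cdot x}\frac{g^2}{k-\Omega}T_\alpha(\rho b_0)$ up to $O(k^{-2})$, and the rest of the Neumann series is $O(k^{-2})$ by the conjugated limiting absorption principle. No stationary phase or pseudodifferential calculus is needed. For the reference beam the $k^{-1}$ term vanishes exactly, because its ray misses $\Sigma$.

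For Step~3, segments from vertices in a small open set disjoint from $\Sigma$ give limited-view data, so no ``standard fan-beam argument'' applies. You need a Boman--Quinto-type support theorem or unique continuation for the X-ray normal operator. The paper cites such a uniqueness theorem, after Lemma~\ref{stab} shows that $\Sigma\subset(q;W']$ persists for all $q$ near $x$; no shrinking of $W'$ is required.
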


The second result concerns the renormalized far-field intensity data. Here the
data are the asymptotic equivalence classes $[\mathcal I_\rho]_\infty$
introduced above.

\begin{theorem}[Determination from renormalized far-field intensity data]
\label{thm:intensity_determines_rho}
The asymptotic equivalence class $[\mathcal I_\rho]_\infty$ uniquely
determines $\rho$.
\end{theorem}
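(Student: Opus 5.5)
The plan has two parts. First, show that the equivalence class $[\mathcal I_\rho]_\infty$ determines the scattering amplitude $A_k(\theta,\alpha)$ for all $k>K$ and all $\theta\neq\alpha$. Second, recover $\rho$ from the high-frequency asymptotics of $A_k$ through a Born-type approximation.

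\emph{Step 1: the data determine $A_k$.} Suppose $\mathcal I_{\rho_1}\sim_\infty\mathcal I_{\rho_2}$. Fix $k>\max(K,k_{\rho_1},k_{\rho_2})$ and $\theta\neq\alpha$, and set $\omega=k(1-\alpha\cdot\theta)>0$. By \eqref{eq:Irho_asymptotics}, for each $j$,
\begin{equation*}
\mathcal I_{\rho_j}(\alpha,k,\theta,r)=e^{i\omega r}A^{(j)}_k+e^{-i\omega r}\overline{A^{(j)}_k}+o(1).
\end{equation*}
Multiply by $e^{-i\omega r}$ and average over $[R,2R]$. The averages of $1$ and $e^{-2i\omega r}$ tend to $1$ and $0$ respectively, since $\omega\neq0$. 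This yields the limit formula stated in the introduction. The difference $\mathcal I_{\rho_1}-\mathcal I_{\rho_2}$ tends to $0$, so its averages also tend to $0$. Hence $A^{(1)}_k(\theta,\alpha)=A^{(2)}_k(\theta,\alpha)$ for $\theta\neq\alpha$, and the data determine $A_k$ off the diagonal.

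\emph{Step 2: a Born approximation for $A_k$.} Use the far-field expansion of Lemma~\ref{lem:farfield_expansion}, which I expect to express $A_k$ through the outgoing resolvent as
\begin{equation*}
A_k(\theta,\alpha)=-c_{n,k}\int e^{-ik\theta\cdot y}\,V_k(y)\,\psi_{k,\alpha}(y)\,dy,
\end{equation*}
with an explicit constant $c_{n,k}\neq0$ coming from the kernel of $((-\Delta)^{1/2}-k-i0)^{-1}$. This kernel behaves like $k$ times the Helmholtz kernel near the sphere $|\xi|=k$, so $|c_{n,k}|\sim k^{(n-1)/2}$ up to the symbol factor. Write $\psi_{k,\alpha}=e^{ik\alpha\cdot x}+\psi^{\rm sc}_{k,\alpha}$. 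The first term gives
\begin{equation*}
-c_{n,k}\frac{g^2}{k-\Omega}\,\hat\rho\bigl(k(\theta-\alpha)\bigr).
\end{equation*}
For the remainder, use the uniform free resolvent estimate and the limiting absorption principle (Lemma~\ref{lem_LAP}) in weighted $L^2$ spaces. Since $\|V_k\|_{L^\infty}=O(k^{-1})$, the scattered field satisfies $\|\psi^{\rm sc}_{k,\alpha}\|_{L^2(\Sigma)}=O(k^{-1}\cdot\|R_0\|)$. The correction is then $O(k^{-1})$ relative to the main term, provided the free resolvent bound between compactly supported weights is $O(1)$ for the half Laplacian. That bound is plausible, because the symbol $|\xi|-k$ has gradient of size $1$.

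\emph{Step 3: conclude.} Fix $\xi\in\mathbb R^n\setminus\{0\}$. For large $k$, choose $\alpha,\theta\in\mathbb S^{n-1}$ with $k(\theta-\alpha)=\xi$, which is possible whenever $|\xi|<2k$; then $\theta\neq\alpha$ automatically. Divide $A_k$ by the known prefactor $-c_{n,k}g^2/(k-\Omega)$ and let $k\to\infty$. This gives $\hat\rho(\xi)=\lim_{k\to\infty}(\text{normalized }A_k)$. The Fourier transform $\hat\rho$ is then determined on $\mathbb R^n\setminus\{0\}$, hence everywhere by continuity, and so $\rho$ is determined.

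\emph{Main obstacle.} The delicate point is Step 2: showing that the multiple-scattering remainder is $o(1)$ after normalization, uniformly for the fixed $\xi$ as $k\to\infty$. I would first try the weighted estimate $\|\chi R_0(k\pm i0)\chi\|_{L^2\to L^2}\le C$, with $\chi$ a cutoff equal to $1$ on $\Sigma$. Combined with the $O(k^{-1})$ size of $V_k$, a Neumann series then gives an $O(k^{-1})$ bound for the remainder.
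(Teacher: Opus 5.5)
Your proposal is correct and follows the paper's proof: Step~1 is Proposition~\ref{prop:justification_Ak} together with Corollary~\ref{cor:equivalence_class_determines_Ak}, Steps~2--3 are Lemma~\ref{lem:amplitude_determines_rho}, and the uniform free resolvent bound you flag as the main obstacle is already Lemma~\ref{lem:free_frac}. The one small difference is how you bound the multiple-scattering remainder: your weighted-$L^2$ Cauchy--Schwarz estimate is automatically uniform in $\theta,\alpha$ (the phases have modulus one) and is slightly more elementary than the paper's route, which bounds $\psi^{\mathrm{sc}}_{k,\alpha}$ in $L^\infty(\Sigma)$ via $H^l_{-\delta}$ estimates, Sobolev embedding, and the conjugated resolvent of Lemma~\ref{lem:conj_resolvent}.
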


Finally, we consider the phaseless far-field data obtained from coherent
superpositions of two incident plane waves.

\begin{theorem}[Determination from phaseless superposition data]
\label{thm:phaseless_superposed}
The phaseless superposition data $\{\mathcal M_k\}$ for all sufficiently large $k$ uniquely determine $\rho$.
\end{theorem}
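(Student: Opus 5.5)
From the data $\{\mathcal M_k\}$ we first recover the mixed products of scattering amplitudes, then pass to the high-frequency Born approximation, which identifies the Fourier transform of $\rho$ up to a phase that we remove using the fact that $\rho$ is real.

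\textbf{Recovering the mixed products.} By \eqref{eq:superposition_amplitude}, for $a\in\{1,-1,i,-i\}$ we have
\begin{equation*}
\mathcal M_k(\alpha_1,\alpha_2,a,\theta)=|A_k(\theta,\alpha_1)|^2+|A_k(\theta,\alpha_2)|^2+2\Re\bigl(a A_k(\theta,\alpha_2)\overline{A_k(\theta,\alpha_1)}\bigr).
\end{equation*}
Polarization then gives
\begin{equation*}
A_k(\theta,\alpha_2)\overline{A_k(\theta,\alpha_1)}=\tfrac14\sum_{a}\bar a\,\mathcal M_k(\alpha_1,\alpha_2,a,\theta).
\end{equation*}
So the data determine $B_k(\theta,\alpha_1,\alpha_2):=A_k(\theta,\alpha_2)\overline{A_k(\theta,\alpha_1)}$ for all directions and all large $k$.

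\textbf{Born approximation.} Next I would use the direct theory of Section~\ref{sec:direct}: the limiting absorption principle (Lemma~\ref{lem_LAP}) with a uniform free resolvent bound, together with the far-field expansion (Lemma~\ref{lem:farfield_expansion}). The outgoing free resolvent of $(-\Delta)^{1/2}-k$ has far-field kernel $c_{n}(k)\,r^{-(n-1)/2}e^{ikr}e^{-ik\theta\cdot y}$, where $c_n(k)$ is an explicit nonzero constant of size roughly $k^{(n-1)/2}$ coming from stationary phase at $|\xi|=k$. Hence
\begin{equation*}
A_k(\theta,\alpha)=-c_n(k)\frac{g^2}{k-\Omega}\Bigl(\hat\rho\bigl(k(\theta-\alpha)\bigr)+E_k(\theta,\alpha)\Bigr).
\end{equation*}
The Born remainder $E_k$ should be $O(k^{-1})$ uniformly, because $V_k=O(k^{-1})$ and the cutoff resolvent $\chi R^0_k\chi$ is bounded on $L^2$. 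In fact $(-\Delta)^{1/2}-k$ has symbol gradient of size $1$, so this resolvent norm is $O(1)$ rather than $O(k^{-1/2})$ as for Helmholtz. Multiplying by the known factor $|c_n(k)|^{-2}(k-\Omega)^2g^{-4}$, the data therefore give
\begin{equation*}
\hat\rho(k(\theta-\alpha_2))\overline{\hat\rho(k(\theta-\alpha_1))}+o(1),\qquad k\to\infty .
\end{equation*}

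\textbf{Fixing the frequencies.} Now fix $\xi,\eta\in\mathbb R^n$ and choose, for large $k$, unit vectors $\theta,\alpha_1,\alpha_2$ with $k(\theta-\alpha_2)=\xi$ and $k(\theta-\alpha_1)=\eta$. This is possible since $|\xi|,|\eta|<2k$ and $n\ge2$: given $\theta$, set $\alpha_2=\theta-\xi/k$, which requires $\theta\cdot\xi=|\xi|^2/(2k)$; similarly $\alpha_1=\theta-\eta/k$ requires $\theta\cdot\eta=|\eta|^2/(2k)$. For $n\ge3$ two such linear conditions on $\theta\in\mathbb S^{n-1}$ are solvable. For $n=2$ they are solvable only for special $(\xi,\eta)$. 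In that case it suffices to take $\eta=0$, where the conditions reduce to $\alpha_1=\theta$ (forward scattering, which is allowed in $\mathcal M_k$), or to take $\eta$ parallel to $\xi$.

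\textbf{Recovering $\rho$.} Letting $k\to\infty$ with the remainder uniformly $o(1)$, the data determine $\hat\rho(\xi)\overline{\hat\rho(0)}$ for every $\xi$. Since $\rho\ge0$ and $\rho\not\equiv0$, we have $\hat\rho(0)=\int\rho>0$. Taking $\xi=0$ gives $\hat\rho(0)^2$, hence $\hat\rho(0)$, hence $\hat\rho(\xi)$ for all $\xi$, and therefore $\rho$. No phase ambiguity remains. For completeness one also notes that $k_\rho$ depends on $\rho$, but for two densities one simply takes $k>\max(k_{\rho_1},k_{\rho_2})$.

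\textbf{Main obstacle.} The hardest step is the uniform $O(k^{-1})$ (or merely $o(1)$) control of the Born remainder $E_k$, uniformly in $\theta,\alpha$. This requires the high-frequency cutoff resolvent estimate for the half-Laplacian and the precise far-field asymptotics of its outgoing kernel, including the asymptotics of $c_n(k)$. I would obtain the latter by writing $R^0_k=((-\Delta)^{1/2}+k)\,(-\Delta-k^2-i0)^{-1}$ up to a smoothing, non-radiating part, and reading the far field off the Helmholtz resolvent.
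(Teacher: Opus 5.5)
Your proposal is correct and follows the paper's proof essentially step for step. Polarization over $a\in\{1,-1,i,-i\}$ recovers $A_k(\theta,\alpha_2)\overline{A_k(\theta,\alpha_1)}$. The Born splitting with the $\theta,\alpha$-uniform $O(k^{-1})$ remainder then gives $\hat\rho(k(\theta-\alpha_2))\overline{\hat\rho(k(\theta-\alpha_1))}$ in the limit. Finally, taking one incident direction equal to $\theta$ lets $\hat\rho(0)=\int\rho>0$ serve as the phase reference; the paper uses $\alpha_2=\theta$, $\alpha_1=\theta-\xi/k$, and you simply swap the roles.

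There are two minor slips, neither of which affects the argument. First, in your aside on general pairs $(\xi,\eta)$, which you never use: for $\eta=t\xi$ with $\xi\neq0$ and $t\notin\{0,1\}$, the two conditions on $\theta$ are inconsistent in every dimension, so ``$\eta$ parallel to $\xi$'' does not work. Second, what removes the phase is positivity of $\rho$, not mere realness as your opening sentence says.
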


\section{Direct problem}\label{sec:direct}

In this section, we study the well-posedness of the direct problem associated with the operator $P_k$ and establish the corresponding limiting absorption principle.

We work with weighted Sobolev spaces. For $l, \delta \in \mathbb{R}$, 
the space $H^l_\delta(\mathbb{R}^n)$ is defined as the completion of 
$C_c^\infty(\mathbb{R}^n)$ with respect to the norm
\begin{equation*}
    \|u\|_{H^l_\delta(\mathbb{R}^n)}
    =
    \left(
        \int_{\mathbb{R}^n}
        \langle x \rangle^{2\delta}
        |(1-\Delta)^{l/2} u(x)|^2
         dx
    \right)^{1/2}.
\end{equation*}
Here $\langle x \rangle = (1+|x|^2)^{1/2}$, and the operator $(1-\Delta)^{l/2}$ is defined via the Fourier transform by
\begin{equation*}
    \mathcal{F}[(1-\Delta)^{l/2} u](\xi)
    =
    (1+|\xi|^2)^{l/2} \hat u(\xi).
\end{equation*}
We define $L^2_\delta(\mathbb{R}^n) := H^0_\delta(\mathbb{R}^n)$.

\begin{remark}\label{notations}
Throughout the paper the dimension $n\ge 2$, the atomic resonance frequency
$\Omega>0$, and the atom-field coupling constant $g>0$ are fixed.
Accordingly, the dependence of constants on $n$, $\Omega$, and $g$ is
suppressed unless explicitly indicated otherwise, and we write $C$ for a
positive constant that may change from line to line. Dependence on all other
parameters is retained explicitly. For $x\in\mathbb{R}$, we denote by $[x]$
the integer part of $x$.
\end{remark}

\subsection{Free fractional resolvent estimate}
We first establish a uniform resolvent bound for
$$
P_k^0 = (-\Delta)^{1/2} - k
$$
by reducing the problem to classical high-frequency resolvent estimates for the Laplacian.

Recall the standard limiting absorption estimate for the Laplacian in
$\mathbb{R}^n$ (see \cite{Agmon1975}, \cite[Section~7.1]{Yafaev}):
if $\delta > 1/2$ and $\lambda \geq 1$, then the limit
\begin{equation*}
    (-\Delta - \lambda \pm i0)^{-1}
    :=
    \lim_{\varepsilon\to 0^+} (-\Delta - \lambda \pm i\varepsilon)^{-1}
\end{equation*}
is well defined as a bounded operator from $L^2_\delta(\mathbb{R}^n)$ to
$L^2_{-\delta}(\mathbb{R}^n)$, and for any multi-index $\alpha$ with
$|\alpha| \leq 1$,
\begin{equation}\label{eq:laplacian_resolvent}
\|\partial^\alpha (-\Delta - \lambda \pm i0)^{-1} f\|_{L^2_{-\delta}}
\leq C \lambda^{\frac{|\alpha|-1}{2}} \|f\|_{L^2_\delta},
\end{equation}
where $C$ is independent of $\lambda \geq 1$ and
$f\in L^2_\delta(\mathbb{R}^n)$.

On the other hand, by the limiting absorption principle for the free
fractional Laplacian established in \cite{Ben-Artzi_Nemirovsky}, for every fixed $k>0$ the boundary values
\begin{equation}\label{frac_L_epsilon_eq_0}
    ((-\Delta)^{1/2}-k\pm i0)^{-1}
    :=
    \lim_{\varepsilon\to0^+}
    ((-\Delta)^{1/2}-k\pm i\varepsilon)^{-1}
\end{equation}
exist as bounded operators from $L^2_\delta(\mathbb{R}^n)$ to
$L^2_{-\delta}(\mathbb{R}^n)$, provided $\delta>1/2$.
The next lemma shows that the corresponding operator norms can be chosen uniformly for $k\geq 1$.

\begin{lemma}\label{lem:free_frac}
Let $n \geq 2$ and $\delta>\frac{1}{2}$. Then there exists $C_{\delta}>0$
such that for all $k\geq 1$, $\varepsilon\geq 0$, and
$f\in L^2_\delta(\mathbb R^n)$,
\begin{equation}\label{eq:frac_est}
\|((-\Delta)^{1/2} - k - i\varepsilon)^{-1}f\|_{L^2_{-\delta}} \leq C_\delta\|f\|_{L^2_\delta}.
\end{equation}
For $\varepsilon=0$, the operator in \eqref{eq:frac_est} is understood as the
outgoing boundary value defined in \eqref{frac_L_epsilon_eq_0}.
\end{lemma}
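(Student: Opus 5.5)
The idea is to factor the half-Laplacian resolvent through the Laplacian resolvent. For $z=k+i\varepsilon$ with $k\ge1$, $\varepsilon>0$, we have the Fourier multiplier identity
\begin{equation*}
\frac{1}{|\xi|-z}=\frac{|\xi|+z}{|\xi|^2-z^2},
\end{equation*}
so that
\begin{equation*}
((-\Delta)^{1/2}-z)^{-1}=\bigl((-\Delta)^{1/2}+z\bigr)(-\Delta-z^2)^{-1},
\end{equation*}
where $z^2=k^2-\varepsilon^2+2ik\varepsilon$ has positive imaginary part. Thus the fractional outgoing resolvent equals $(-\Delta)^{1/2}(-\Delta-z^2)^{-1}+z(-\Delta-z^2)^{-1}$. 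The second term is controlled by \eqref{eq:laplacian_resolvent} with $|\alpha|=0$ and $\lambda\approx k^2$. This gives a bound $|z|\cdot C k^{-1}$, which is uniform for $\varepsilon\lesssim k$. The first term requires $\||\nabla| (-\Delta-z^2)^{-1}f\|_{L^2_{-\delta}}\lesssim \|f\|_{L^2_\delta}$. This is the $|\alpha|=1$ case of \eqref{eq:laplacian_resolvent}, except that $|\nabla|=(-\Delta)^{1/2}$ is a nonlocal operator rather than a derivative.

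The plan is as follows.

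\emph{Step 1: large $\varepsilon$.} When $\varepsilon\ge k/2$ (or simply $\varepsilon\ge1$), avoid the factorization. I would not use the crude bound $\|((-\Delta)^{1/2}-z)^{-1}\|_{L^2\to L^2}\le\varepsilon^{-1}$ together with $L^2_\delta\subset L^2\subset L^2_{-\delta}$, since $\varepsilon\ge1$ alone only gives a bound $1$, which is fine but does not cover small $\varepsilon$. The useful cutoff is $\varepsilon\ge k/2$: then $|z|\lesssim\varepsilon$ and the trivial $L^2$ bound suffices. So only $0<\varepsilon<k/2$ needs the resolvent estimates, and there $|z|\le 2k$ and $\operatorname{Re} z^2\ge \tfrac34k^2$.

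\emph{Step 2: uniform Laplacian estimates off the real axis.} \eqref{eq:laplacian_resolvent} is stated for boundary values $\lambda\pm i0$. I would use its standard extension to $\lambda+i\mu$, $\mu>0$, uniformly in $\mu$ and $\lambda\ge1$, which is the usual form of Agmon's estimate in \cite{Agmon1975,Yafaev}. Alternatively, obtain it by a Phragm\'en--Lindel\"of/maximum principle argument on the strip of analytic operator-valued functions.

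\emph{Step 3: replacing $|\nabla|$ by derivatives in weighted spaces.} This is the main obstacle. I expect to need that $(-\Delta)^{1/2}$ is bounded from $H^1_{-\delta}$ to $L^2_{-\delta}$. Concretely, I would bound $\||\nabla|u\|_{L^2_{-\delta}}\le C(\|u\|_{L^2_{-\delta}}+\|\nabla u\|_{L^2_{-\delta}})$. To do this, split $|\xi|=\sum_j \frac{\xi_j}{|\xi|}\xi_j$ using Riesz transforms $R_j$. Since $\langle x\rangle^{-2\delta}$ is an $A_2$ weight for $2\delta<n$, Riesz transforms are bounded on $L^2_{-\delta}$ in that range. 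For $\delta\ge n/2$, reduce to a $\delta\in(1/2,n/2)$, which is allowed since $n\ge2$: the estimate for smaller $\delta$ implies it for larger $\delta$ by the inclusions $L^2_{\delta'}\subset L^2_\delta$, $L^2_{-\delta}\subset L^2_{-\delta'}$. Alternatively, I would commute the weight with the pseudodifferential operator $|\nabla|(1-\Delta)^{-1/2}\in S^0$ modulo a smoothing error near $\xi=0$. Combining this with \eqref{eq:laplacian_resolvent} for $|\alpha|\le1$ gives
\begin{equation*}
\||\nabla|(-\Delta-z^2)^{-1}f\|\lesssim (k^{-1}+1)\|f\|.
\end{equation*}

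\emph{Step 4: the case $\varepsilon=0$.} Pass to $\varepsilon\to0^+$ using the existence of the limit \eqref{frac_L_epsilon_eq_0}: the bound $C_\delta$ is preserved under weak limits.
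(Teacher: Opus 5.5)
Your proof is correct, but it goes a harder way than the paper.

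\textbf{The paper's route.} The paper uses the partial-fraction form of your identity instead of the product form:
\[
\frac{1}{t-w}=\frac{1}{t+w}+\frac{2w}{t^2-w^2},\qquad w=k+i\varepsilon,\ t\ge0 .
\]
This gives $((-\Delta)^{1/2}-w)^{-1}=((-\Delta)^{1/2}+w)^{-1}+2w(-\Delta-w^2)^{-1}$.
\begin{enumerate}
\item[(i)] The first multiplier satisfies $|t+w|^{-1}\le 1/k\le 1$. So it is bounded on $L^2$, and hence from $L^2_\delta$ to $L^2_{-\delta}$.
\item[(ii)] The second term is handled by the zeroth-order Agmon estimate for a complex spectral parameter, $\|(-\Delta-\zeta)^{-1}\|_{L^2_\delta\to L^2_{-\delta}}\le C|\zeta|^{-1/2}$ for $|\zeta|\ge1$. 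Since $|w|\,|w^2|^{-1/2}=1$ exactly, no case split in $\varepsilon$ is needed.
\end{enumerate}

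\textbf{Your Step 3 is avoidable.} The term you call the main obstacle is harmless. From $|\xi|/(|\xi|^2-z^2)=1/(|\xi|+z)+z/(|\xi|^2-z^2)$ you get
\[
(-\Delta)^{1/2}(-\Delta-z^2)^{-1}=((-\Delta)^{1/2}+z)^{-1}+z(-\Delta-z^2)^{-1}.
\]
So you need neither the gradient case $|\alpha|=1$ of the Laplacian resolvent estimate nor any weighted bound for the nonlocal operator $|\nabla|$.

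\textbf{Validity of your steps.} As written, your argument still goes through.
\begin{enumerate}
\item[(a)] The weight $\langle x\rangle^{-2\delta}$ is in $A_2$ exactly when $\delta<n/2$, so the Riesz transforms are bounded on $L^2_{-\delta}$. The identity $|\nabla|=\sum_j R_j\partial_j$ then gives $\||\nabla|u\|_{L^2_{-\delta}}\lesssim\|\nabla u\|_{L^2_{-\delta}}$.
\item[(b)] Reducing larger $\delta$ to some $\delta\in(1/2,n/2)$ is legitimate, because the norms are monotone: $\|f\|_{L^2_\delta}\le\|f\|_{L^2_{\delta'}}$ and $\|u\|_{L^2_{-\delta'}}\le\|u\|_{L^2_{-\delta}}$ for $\delta'>\delta$. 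The interval is non-empty because $n\ge2$.
\item[(c)] Restricting to $\varepsilon<k/2$ keeps $\operatorname{Re} z^2\ge\tfrac34k^2$, so a real-part form of the Agmon bound applies. The range $\varepsilon\ge k/2$ is covered by the trivial bound $1/\varepsilon\le 2/k$.
\end{enumerate}

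\textbf{Trade-off.} Your route yields a weighted bound on $|\nabla|$ (for $\delta<n/2$) that is stronger than this lemma needs. The paper's route gives a few-line proof whose only non-trivial input is the scalar Agmon estimate. Both proofs pass to $\varepsilon=0$ the same way, using the existence of the outgoing limit.
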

\begin{proof}
We first prove the estimate for $\varepsilon>0$. Set $w=k+i\varepsilon$, and
note that $|t-w|\geq\varepsilon$ and $|t+w|\geq k$ for $t\geq 0$, while
$\Im w^2=2k\varepsilon>0$, so that $w^2$ lies outside the spectrum
$[0,\infty)$ of $-\Delta$. As Fourier multipliers, the elementary identities
\[
    \frac{1}{t-w}
    =
    \frac{t+w}{t^2-w^2}
    =
    \frac{1}{t+w}+\frac{2w}{t^2-w^2},
    \qquad t\geq 0,
\]
give
\begin{equation}\label{eq:factorization_eps}
    \left((-\Delta)^{1/2}-k-i\varepsilon\right)^{-1}
    =
    \bigl((-\Delta)^{1/2}+w\bigr)
    \bigl(-\Delta-w^2\bigr)^{-1}
\end{equation}
and
\begin{equation}\label{eq:partial_fractions}
    \left((-\Delta)^{1/2}-k-i\varepsilon\right)^{-1}
    =
    \bigl((-\Delta)^{1/2}+w\bigr)^{-1}
    +
    2w\bigl(-\Delta-w^2\bigr)^{-1}.
\end{equation}
The factorization \eqref{eq:factorization_eps} will be needed later; here we
use \eqref{eq:partial_fractions}. The multiplier of the first operator in
\eqref{eq:partial_fractions} is bounded by $1/k\leq 1$, so
\[
    \|((-\Delta)^{1/2}+w)^{-1}f\|_{L^2_{-\delta}}
    \leq
    \|((-\Delta)^{1/2}+w)^{-1}f\|_{L^2}
    \leq
    \|f\|_{L^2}
    \leq
    \|f\|_{L^2_\delta}.
\]
For the second operator, since $w^2\in\mathbb{C}\setminus[0,\infty)$ and
$|w^2|=k^2+\varepsilon^2\geq 1$, the Agmon estimate for complex spectral
parameter (see \cite[Theorem~7.1.1]{FeldmanSaloUhlmann}, whose constant
depends only on $n$, $\delta$, and a lower bound for the modulus of the
spectral parameter, here equal to $1$) gives
\[
    2|w|\,\|(-\Delta-w^2)^{-1}f\|_{L^2_{-\delta}}
    \leq
    2C_\delta\,|w|\,|w^2|^{-1/2}\,\|f\|_{L^2_\delta}
    =
    2C_\delta\,\|f\|_{L^2_\delta}.
\]
Combining the two bounds with \eqref{eq:partial_fractions} proves
\eqref{eq:frac_est} for $\varepsilon>0$.

Let $f\in L^2_\delta(\mathbb R^n)$. By the triangle inequality,
\begin{multline*}
   \|\left((-\Delta)^{1/2}-k-i0\right)^{-1}f\|_{L^2_{-\delta}}
    \leq \|\left((-\Delta)^{1/2}-k-i\varepsilon\right)^{-1}f\|_{L^2_{-\delta}}\\
    + \|\left((-\Delta)^{1/2}-k-i0\right)^{-1}f - \left((-\Delta)^{1/2}-k-i\varepsilon \right)^{-1}f\|_{L^2_{-\delta}}.
\end{multline*}
By \eqref{eq:frac_est} for $\varepsilon>0$, the first term on the right hand
side is bounded from above by $C\|f\|_{L^2_\delta}$. Then, by taking the
limit $\varepsilon\to 0$ and using \eqref{frac_L_epsilon_eq_0}, we obtain
\eqref{eq:frac_est} for $\varepsilon=0$.
\end{proof}

For $\varepsilon>0$ we set 
\begin{equation*}
    R_k^\varepsilon := ((-\Delta)^{1/2} - k - i\varepsilon)^{-1} = (P_k^0 - i\varepsilon)^{-1},
    \qquad
    R_k^0 := ((-\Delta)^{1/2} - k - i0)^{-1}.
\end{equation*}

As an immediate consequence, we obtain the corresponding estimate in weighted Sobolev spaces.

\begin{corollary}\label{cor:free_frac_Hl}
Let $n\geq 2$, let $\delta> 1/2$, and let $l\geq 0$ be an integer.
Then there exists $C=C_{\delta,l}>0$ such that, for all
$k\geq 1$, $\varepsilon\geq 0$, and
$f\in H^l_\delta(\mathbb R^n)$,
\begin{equation}\label{eq:frac_Hl_est}
    \|R_k^\varepsilon f\|_{H^l_{-\delta}}
    \leq
    C_{\delta,l}\|f\|_{H^l_\delta}.
\end{equation}
For $\varepsilon=0$, the operator $R_k^\varepsilon$ in
\eqref{eq:frac_Hl_est} is understood as the outgoing boundary value $R_k^0$.
Moreover, for every fixed $k\geq 1$ and every
$f\in H^l_\delta(\mathbb R^n)$,
\begin{equation}\label{eq:free_limit_Hl}
    R_k^\varepsilon f \to R_k^0 f
    \quad \text{in } H^l_{-\delta}(\mathbb R^n)
    \quad \text{as } \varepsilon\to0^+.
\end{equation}
\end{corollary}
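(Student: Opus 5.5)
The plan is to reduce the $H^l$ statement to Lemma~\ref{lem:free_frac} by commuting derivatives past the resolvent. Both $R_k^\varepsilon$ and $(1-\Delta)^{l/2}$ are Fourier multipliers, so they commute on $C_c^\infty(\mathbb R^n)$ for $\varepsilon>0$. For $\varepsilon=0$ the operator $R_k^0$ is the strong limit in $\mathcal L(L^2_\delta,L^2_{-\delta})$, and the same commutation passes to the limit in the sense of distributions.

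To avoid fractional powers with weights, I would use the fact that $l$ is an integer and the standard equivalence of norms
\begin{equation*}
    \|u\|_{H^l_{\pm\delta}}\simeq\sum_{|\beta|\le l}\|\partial^\beta u\|_{L^2_{\pm\delta}}.
\end{equation*}
This equivalence holds because commutators of $\partial^\beta$ with the weight $\langle x\rangle^{\delta}$ produce lower-order terms whose coefficients are bounded symbols. For $f\in C_c^\infty$ and $|\beta|\le l$, we have $\partial^\beta R_k^\varepsilon f=R_k^\varepsilon\partial^\beta f$. Applying \eqref{eq:frac_est} to $\partial^\beta f\in L^2_\delta$ gives
\begin{equation*}
    \|\partial^\beta R_k^\varepsilon f\|_{L^2_{-\delta}}\le C_\delta\|\partial^\beta f\|_{L^2_\delta}.
\end{equation*}
Summing over $|\beta|\le l$ yields \eqref{eq:frac_Hl_est} on $C_c^\infty$, with a constant independent of $k\ge1$ and $\varepsilon\ge0$. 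Density of $C_c^\infty$ in $H^l_\delta$ then extends the bound to all of $H^l_\delta$.

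For the convergence \eqref{eq:free_limit_Hl}, I would fix $k$. For $f\in C_c^\infty$, each $\partial^\beta R_k^\varepsilon f=R_k^\varepsilon\partial^\beta f$ converges in $L^2_{-\delta}$ to $R_k^0\partial^\beta f$ by \eqref{frac_L_epsilon_eq_0}. The distributional identity gives $R_k^0\partial^\beta f=\partial^\beta R_k^0 f$, so $R_k^\varepsilon f\to R_k^0 f$ in $H^l_{-\delta}$. For general $f\in H^l_\delta$, a standard $\varepsilon/3$ argument applies. One approximates $f$ by some $f_j\in C_c^\infty$ and uses the uniform bound \eqref{eq:frac_Hl_est} to control $R_k^\varepsilon(f-f_j)$ and $R_k^0(f-f_j)$ uniformly in $\varepsilon$.

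The main obstacle is the equivalence between the $(1-\Delta)^{l/2}$-weighted norm and the derivative-sum norm. This is standard, since $\langle x\rangle^{\delta}$ is a symbol of order $\delta$ and commutators with it lower the order. If I wanted to avoid that equivalence, I would instead establish the commutation with $(1-\Delta)^{l/2}$ directly. I would also need a short justification that $R_k^0$ commutes with $\partial^\beta$. This follows by pairing with test functions and letting $\varepsilon\to0$.
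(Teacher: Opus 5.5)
Your argument is correct, and its core is the paper's: commute a Fourier multiplier past $R_k^\varepsilon$, apply Lemma~\ref{lem:free_frac}, and pass to $\varepsilon=0$ by a limit/density argument. The difference is in which multiplier you commute with.

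You use $\partial^\beta$. That forces you to invoke the weighted equivalence $\|u\|_{H^l_{\pm\delta}}\simeq\sum_{|\beta|\le l}\|\partial^\beta u\|_{L^2_{\pm\delta}}$. This equivalence is true, via $L^2_{\pm\delta}$-boundedness of smooth order-zero multipliers such as $\xi^\beta\langle\xi\rangle^{-l}$, but it is not free.

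The paper's norm is by definition $\|u\|_{H^l_\delta}=\|(1-\Delta)^{l/2}u\|_{L^2_\delta}$. So commuting directly with $(1-\Delta)^{l/2}$ gives \eqref{eq:frac_Hl_est} in one line, with the same constant $C_\delta$ as in Lemma~\ref{lem:free_frac} and without using that $l$ is an integer. The ``main obstacle'' you flag therefore never arises; the alternative you mention at the end is exactly the paper's proof, and it is immediate.

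Your explicit check that $R_k^0$ commutes with derivatives (pairing with test functions and letting $\varepsilon\to0^+$) is a genuine plus. The paper uses this step tacitly when it writes $\|R_k^\varepsilon f-R_k^0 f\|_{H^l_{-\delta}}=\|R_k^\varepsilon(1-\Delta)^{l/2}f-R_k^0(1-\Delta)^{l/2}f\|_{L^2_{-\delta}}$ and when it passes to $\varepsilon=0$ in \eqref{eq:frac_Hl_est}.
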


\begin{proof}
We first consider $\varepsilon >0$. Since $R_k^\varepsilon$ and
$(1-\Delta)^{l/2}$ are Fourier multipliers, they commute. Therefore, by Lemma~\ref{lem:free_frac},
\begin{equation*}
    \|R_k^\varepsilon f\|_{H^l_{-\delta}}
    =
    \|R_k^\varepsilon (1-\Delta)^{l/2}f\|_{L^2_{-\delta}} 
    \leq
    C\|(1-\Delta)^{l/2}f\|_{L^2_\delta}  
    =
    C\|f\|_{H^l_\delta}.
\end{equation*}
This proves \eqref{eq:frac_Hl_est} for $\varepsilon >0$.

For $\varepsilon=0$, the same triangle-inequality argument as in the proof of
Lemma~\ref{lem:free_frac}, together with \eqref{frac_L_epsilon_eq_0}, gives
the estimate for the outgoing boundary value. Thus \eqref{eq:frac_Hl_est}
also holds for $\varepsilon=0$.

It remains to prove \eqref{eq:free_limit_Hl}. Since $R_k^\varepsilon$ and
$(1-\Delta)^{l/2}$ commute,
\[
    \|R_k^\varepsilon f-R_k^0 f\|_{H^l_{-\delta}}
    =
    \|R_k^\varepsilon (1-\Delta)^{l/2}f
      -R_k^0 (1-\Delta)^{l/2}f\|_{L^2_{-\delta}}.
\]
Since $(1-\Delta)^{l/2}f\in L^2_\delta(\mathbb R^n)$, the convergence
\eqref{frac_L_epsilon_eq_0} implies that the right-hand side tends to zero as
$\varepsilon\to0^+$. This proves \eqref{eq:free_limit_Hl}.
\end{proof}

\begin{corollary}\label{cor:r_smooth}
    If $f\in C_c^\infty(\mathbb{R}^n)$, then $R_k^\varepsilon f \in C^\infty(\mathbb{R}^n)$ for every
$k\geq 1$ and $\varepsilon \geq 0$.
\end{corollary}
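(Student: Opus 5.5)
Since $f\in C_c^\infty(\mathbb{R}^n)$, we have $f\in H^l_\delta(\mathbb{R}^n)$ for every integer $l\geq 0$ and every $\delta$; fix some $\delta>1/2$. Corollary~\ref{cor:free_frac_Hl} then gives $R_k^\varepsilon f\in H^l_{-\delta}(\mathbb{R}^n)$ for all $l\ge 0$, both for $\varepsilon>0$ and for the outgoing boundary value $\varepsilon=0$. The plan is to deduce smoothness from this family of weighted Sobolev memberships by a local Sobolev embedding argument. The argument is the same for every $k\geq 1$ and $\varepsilon\geq0$.

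\textbf{Localization.} Fix a ball $B$ and a cutoff $\chi\in C_c^\infty(\mathbb{R}^n)$ with $\chi=1$ on $B$, and set $u=R_k^\varepsilon f$. Multiplication by $\chi$ maps $H^l_{-\delta}$ boundedly into $H^l_{0}=H^l(\mathbb{R}^n)$ for integer $l$. To see this, note that on the support of $\chi$ the weight $\langle x\rangle^{-\delta}$ is bounded above and below. Equivalently, $H^l_{-\delta}$ coincides, with equivalent norms, with the space of $u$ such that $\langle x\rangle^{-\delta}\partial^\beta u\in L^2$ for $|\beta|\le l$. This identification is the standard fact that the weight $\langle x\rangle^{s}$ commutes with $(1-\Delta)^{l/2}$ modulo lower-order terms with bounded symbols. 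Leibniz' rule then shows $\chi u\in H^l(\mathbb{R}^n)$ for all $l$.

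\textbf{Embedding.} By the Sobolev embedding $H^l(\mathbb{R}^n)\hookrightarrow C^m(\mathbb{R}^n)$ for $l>m+n/2$, we get $\chi u\in C^m$ for every $m$. Hence $u\in C^\infty(B)$, and since $B$ is arbitrary, $u\in C^\infty(\mathbb{R}^n)$.

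\textbf{Main obstacle.} The only nonroutine point is the equivalence of the weighted norm, which is defined through $(1-\Delta)^{l/2}$ acting before multiplication by the weight, with the "derivatives in weighted $L^2$" description. I expect to quote this as standard, e.g.\ from the weighted-space literature such as \cite{Yafaev} or \cite{Agmon1975}.

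A lighter alternative avoids it. For $\varepsilon>0$ the multiplier $(|\xi|-k-i\varepsilon)^{-1}$ is bounded, so $\widehat{u}=\widehat f/(|\xi|-k-i\varepsilon)$ decays rapidly. Then $\widehat u\,\xi^\beta\in L^1$ for every $\beta$, and $u\in C^\infty$ directly. For $\varepsilon=0$, one can instead apply distributional derivatives: $\partial^\beta u=R_k^0\partial^\beta f\in L^2_{-\delta}$, since Fourier multipliers commute and the limit \eqref{frac_L_epsilon_eq_0} commutes with $\partial^\beta$ in distributions. Local $H^l$ regularity for all $l$ follows, and Sobolev embedding concludes as before.
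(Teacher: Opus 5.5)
Your proposal is correct and takes the same route as the paper. The paper's entire proof is ``Follows from Corollary~\ref{cor:free_frac_Hl}'': $R_k^\varepsilon f\in H^l_{-\delta}$ for every $l$, and then local Sobolev embedding gives smoothness, which is exactly the localization-and-embedding argument you spell out (your Fourier-side alternative for $\varepsilon>0$ and the commutation $\partial^\beta R_k^0 f=R_k^0\partial^\beta f$ for $\varepsilon=0$ are also valid).
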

\begin{proof}
    Follows from Corollary \ref{cor:free_frac_Hl}.
\end{proof}

\subsection{Limiting absorption principle for the perturbed operator}
We now establish the limiting absorption principle for the perturbed operator $P_k$.
The argument is based on the resolvent identity and a Neumann series expansion, using the uniform bounds for the free resolvent obtained above.
\begin{lemma}\label{lem_LAP}
Let $n \geq 2$. Fix $\delta >1/2$ and an integer $l \geq 0$. There exists $k_\rho = k_{\rho, l, \delta}>0$ and a constant $C_{ \rho,l, \delta}>0$ such that for every $k > k_\rho$ and every $f \in H^l_\delta(\mathbb{R}^n)$, the limit
\begin{equation}\label{eq:LAP_def}
\psi^f := \lim_{\varepsilon \to 0^+}(P_k - i\varepsilon)^{-1}f
\end{equation}
exists in $H^l_{-\delta}(\mathbb{R}^n)$, and 
\begin{equation}\label{eq:weighted_stability}
\|\psi^f\|_{H^l_{-\delta}} \leq C_{ \rho,l, \delta}\|f\|_{H^l_\delta}.
\end{equation}
Moreover, $\psi^f$ admits the Neumann series representation
\begin{equation}\label{eq:psi_equals_Nseries}
\psi^f
=
\sum_{j=0}^\infty
(-R_k^0 V_k)^j R_k^0 f
\quad\text{in } H^l_{-\delta}(\mathbb{R}^n).
\end{equation}

\end{lemma}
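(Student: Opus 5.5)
The plan is a Neumann series argument for the perturbed resolvent, using two facts: $V_k=\frac{g^2}{k-\Omega}\rho$ has size $O(1/k)$, and by Corollary~\ref{cor:free_frac_Hl} the free resolvents $R_k^\varepsilon$ are bounded $H^l_\delta\to H^l_{-\delta}$ uniformly in $k\ge1$ and $\varepsilon\ge0$.

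\emph{Step 1.} Since $\rho\in C_c^\infty$, multiplication by $\rho$ is bounded $H^l_{-\delta}\to H^l_\delta$. To see this, choose $\chi\in C_c^\infty$ with $\chi=1$ on $\Sigma$. Multiplication by the smooth compactly supported functions $\langle x\rangle^{2\delta}\rho$ (and similar) is bounded on $H^l$. The weights are comparable to constants on $\operatorname{supp}\chi$, and for integer $l$ one can commute $\langle x\rangle^{\pm\delta}$ through $(1-\Delta)^{l/2}$ with harmless lower-order terms. Combining with Corollary~\ref{cor:free_frac_Hl} gives
\[
\|V_kR_k^\varepsilon\|_{H^l_\delta\to H^l_\delta}\le \frac{g^2}{|k-\Omega|}\,C_{\rho,l,\delta}\,C_{\delta,l}\le \tfrac12
\]
for all $k>k_\rho$ with $k_\rho$ large, uniformly in $\varepsilon\ge0$.

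\emph{Step 2.} For $\varepsilon>0$, $P_k-i\varepsilon=(I+V_kR_k^\varepsilon)(P_k^0-i\varepsilon)$ as operators. Here $P_k-i\varepsilon$ is invertible on $L^2$, since $P_k$ is self-adjoint ($\rho$ is real and bounded). This gives
\[
(P_k-i\varepsilon)^{-1}f=R_k^\varepsilon(I+V_kR_k^\varepsilon)^{-1}f=\sum_{j\ge0}R_k^\varepsilon(-V_kR_k^\varepsilon)^jf=\sum_{j\ge0}(-R_k^\varepsilon V_k)^jR_k^\varepsilon f,
\]
with the series converging in $H^l_{-\delta}$. This identity is valid first for $f\in H^l_\delta\cap L^2$ and then by density. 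Each term is bounded by $C_{\delta,l}2^{-j}\|f\|_{H^l_\delta}$.

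\emph{Step 3 (limit $\varepsilon\to0^+$).} By \eqref{eq:free_limit_Hl}, $R_k^\varepsilon u\to R_k^0u$ in $H^l_{-\delta}$ for each fixed $u\in H^l_\delta$. An induction on $j$ shows that each term converges. The induction uses the uniform bounds from Step 1 to handle products: one writes $R^\varepsilon V R^\varepsilon u-R^0VR^0u=R^\varepsilon V(R^\varepsilon-R^0)u+(R^\varepsilon-R^0)VR^0u$. The tails are uniformly summable by the geometric bound $2^{-j}$, so dominated convergence for series yields the existence of the limit \eqref{eq:LAP_def}, the representation \eqref{eq:psi_equals_Nseries}, and the estimate \eqref{eq:weighted_stability} with constant $2C_{\delta,l}$.

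The main technical point is Step 1: checking carefully that multiplication by $\rho$ maps $H^l_{-\delta}\to H^l_\delta$ boundedly, which requires handling the weight commutators in $H^l$. Everything else is a standard geometric series argument.
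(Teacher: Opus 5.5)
Your proposal is correct and takes essentially the same route as the paper. It builds a Neumann series from the uniform bounds of Corollary~\ref{cor:free_frac_Hl} and the $O(1/k)$ size of $V_k$, then passes to $\varepsilon\to0^+$ term by term by induction with uniformly geometric tails. The only cosmetic difference is that you invert $I+V_kR_k^\varepsilon$ on $H^l_\delta$, whereas the paper inverts $I+R_k^\varepsilon V_k$ on $H^l_{-\delta}$; the two give the same series via $R_k^\varepsilon(I+V_kR_k^\varepsilon)^{-1}=(I+R_k^\varepsilon V_k)^{-1}R_k^\varepsilon$, and your identification of $(P_k-i\varepsilon)^{-1}f$ with the series via self-adjointness is slightly more explicit than the paper's.
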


\begin{proof}
    For $\varepsilon >0$, set $\psi_\varepsilon = (P_k - i\varepsilon)^{-1}f$. Applying $R_k^\varepsilon$ to the identity $(P_k^0 - i\varepsilon)\psi_\varepsilon = f - V_k\psi_\varepsilon$ gives
    \begin{equation*}
        \psi_\varepsilon = R_k^\varepsilon f - R_k^\varepsilon(V_k\psi_\varepsilon).
    \end{equation*}
    Next, we estimate the potential term. Recall that
    \begin{equation*}
        V_k = \frac{g^2}{k - \Omega}\rho,
    \end{equation*}
    where $\rho \in C_c^\infty(\mathbb{R}^n)$. Note that multiplication by $\rho$ is a bounded operator from $H^l_{-\delta}(\mathbb{R}^n)$ to $H^l_\delta(\mathbb{R}^n)$. Indeed, $\rho$ is smooth with compact support, so it preserves the Sobolev order, and the weight $\langle x\rangle^{\pm\delta}$ is bounded on the support of $\rho$. Consequently, for $k>\Omega$,
    \begin{equation}\label{eq:Vk_bound}
    \|V_k u\|_{H^l_\delta}
    \leq
    C_{\rho,l,\delta} \frac{g^2 }{k - \Omega}
    \|u\|_{H^l_{-\delta}}.
    \end{equation}
    Hence, by Corollary \ref{cor:free_frac_Hl}, for any $u\in H^l_{-\delta}(\mathbb{R}^n)$
    \begin{equation*}
         \|R_k^\varepsilon V_k u \|_{H^l_{-\delta}}\leq C_{l,\delta} \|V_k u\|_{H^l_\delta} \leq C_{\rho,l,\delta} \frac{g^2 }{k - \Omega}
    \|u\|_{H^l_{-\delta}},
    \end{equation*}
    uniformly in $\varepsilon \geq 0$. Therefore, there exists $k_\rho = k_{\rho,l,\delta} >0$ such that for any $k>k_{\rho}$ the following estimates hold
    \begin{equation}\label{eq:contraction}
        \|R_k^\varepsilon V_k\|_{H^l_{-\delta} \to H^l_{-\delta}}
        \leq
        \frac{C_{\rho,l,\delta} }{k} < 1,
    \end{equation}
    uniformly in $\varepsilon\geq 0$. Therefore, the Neumann series
    \begin{equation}\label{eq:Neumann}
    \psi_\varepsilon
    =
    \sum_{j=0}^\infty (-R_k^\varepsilon V_k)^j R_k^\varepsilon f
    \end{equation}
    converges in $H^l_{-\delta}(\mathbb{R}^n)$ for all $\varepsilon>0$ and sufficiently large $k>k_\rho$. Since \eqref{eq:contraction} holds also for $\varepsilon=0$, we can set 
    \begin{equation*}
    \psi_0
    =
    \sum_{j=0}^\infty (-R_k^0 V_k)^j R_k^0 f.
    \end{equation*}
    Using \eqref{eq:contraction}, we estimate
    \begin{equation}\label{eq:uniform_est}
        \|\psi_\varepsilon\|_{H^l_{-\delta}}
        \leq
        \frac{C_{\delta,l}}{1 - C_{\rho,l,\delta}/k}
        \|f\|_{H^l_\delta}
        \leq
        C_{\rho,l,\delta} \|f\|_{H^l_\delta},
    \end{equation}
    where $C_{\rho,l,\delta}>0$ is independent of $k > k_\rho$ and $\varepsilon\geq 0$.

    It remains to show that $\psi_\varepsilon$ converges to $\psi_0$ in $H^l_{-\delta}(\mathbb{R}^n)$ as $\varepsilon \to 0^+$. Let $\tau > 0$. Since \eqref{eq:contraction} holds uniformly in $\varepsilon \geq 0$, the Neumann series converges in operator norm uniformly with respect to $\varepsilon$. Hence we may choose $N$ such that
    \begin{equation}\label{eq:tail}
        \sum_{j > N} \|(-R_k^\varepsilon V_k)^j R_k^\varepsilon f\|_{H^l_{-\delta}}
        < \frac{\tau}{3}
    \end{equation}
    uniformly for $\varepsilon\geq 0$. Therefore, for $\varepsilon  >0$, using the Neumann series
    representation we obtain
    \begin{equation}\label{psiepsilon_minus_psi0}
        \|\psi_\varepsilon - \psi_0\|_{H^l_{-\delta}}
        \leq
        \sum_{j=0}^N
        \left\| (-R_k^\varepsilon V_k)^j R_k^\varepsilon f
        -
        (-R_k^0 V_k)^j R_k^0 f \right\|_{H^l_{-\delta}} + \frac{2\tau}{3}.
    \end{equation}
    We show, by induction, that for each fixed $j \geq 0$,
    \begin{equation}\label{ind}
        (-R_k^\varepsilon V_k)^j R_k^\varepsilon f
        \to
        (-R_k^0 V_k)^j R_k^0 f
        \quad \text{in } H^l_{-\delta}(\mathbb{R}^n)
        \quad \text{as } \varepsilon \to 0^+.
    \end{equation}
    The case $j=0$ follows from Corollary~\ref{cor:free_frac_Hl}. Assume that the claim holds for $j-1$ and set
    \begin{equation*}
        u_0
        =
        (-R_k^0 V_k)^{j-1} R_k^0 f,
        \qquad
        u_\varepsilon
        =
        (-R_k^\varepsilon V_k)^{j-1} R_k^\varepsilon f.
    \end{equation*}
    By the induction hypothesis, $u_\varepsilon \to u_0$ in $H^l_{-\delta}(\mathbb{R}^n)$ as $\varepsilon\to 0^+$. Next, we estimate
    \begin{multline*}
        \left\| (-R_k^\varepsilon V_k)^j R_k^\varepsilon f
        -
        (-R_k^0 V_k)^j R_k^0 f \right\|_{H^l_{-\delta}} = \left\| [R_k^\varepsilon V_k] u_\varepsilon
        -
        [R_k^0 V_k]u_0 \right\|_{H^l_{-\delta}}\\
        \leq \|R_k^\varepsilon V_k\|_{H^l_{-\delta} \to H^l_{-\delta}} \|u_\varepsilon - u_0\|_{H^l_{-\delta}} + \|[R_k^\varepsilon - R_k^0](V_k u_0)\|_{H^l_{-\delta}}
    \end{multline*}
    By \eqref{eq:contraction} and the induction hypothesis, the first term tends to zero as $\varepsilon\to 0^+$. By Corollary \ref{cor:free_frac_Hl}, the second term vanishes as $\varepsilon\to 0^+$. Therefore, \eqref{ind} holds, and consequently, for sufficiently small $\varepsilon>0$, the first term on the right-hand side of \eqref{psiepsilon_minus_psi0} is bounded from above by $\tau/3$. This means that $\psi_\varepsilon$ converges to $\psi_0$ in
    $H^l_{-\delta}(\mathbb{R}^n)$ as $\varepsilon \to 0^+$. Hence the limit in \eqref{eq:LAP_def} exists and
    \begin{equation*}
        \psi^f=\psi_0.
    \end{equation*}
    By the definition of $\psi_0$, we therefore obtain
    \begin{equation*}
        \psi^f
        =
        \sum_{j=0}^\infty
        (-R_k^0 V_k)^j R_k^0 f
        \quad\text{in } H^l_{-\delta}(\mathbb{R}^n),
    \end{equation*}
    which is precisely \eqref{eq:psi_equals_Nseries}. Moreover, since
    \eqref{eq:uniform_est} holds for $\varepsilon=0$, we get
    \eqref{eq:weighted_stability}. This completes the proof.
\end{proof}

\subsection{Far-field asymptotics and the scattering amplitude}

We begin with the far-field expansion of the outgoing Laplacian resolvent applied to compactly supported data. It follows by integrating the asymptotic expansion of the outgoing Green's function of the Helmholtz operator over the support of $h$; see \cite[Eq. (7.3.2)]{FeldmanSaloUhlmann}.

\begin{lemma}\label{lem:laplacian_farfield}
Let $n \geq 2$, $k > 0$, and $h \in L^2(\mathbb{R}^n)$ with compact support.
Then
\begin{equation}\label{eq:laplacian_farfield}
(-\Delta - k^2 - i0)^{-1}h\,(r\theta)
=
d_n\, k^{(n-3)/2}\,
r^{-(n-1)/2}\, e^{ikr}\,\hat{h}(k\theta)
+ O(r^{-(n+1)/2}),
\end{equation}
as $r\to\infty$, where $\theta \in \mathbb{S}^{n-1}$ and $d_n\in \mathbb{C}$ is a non-zero constant depending only on $n$.
The remainder is uniform for $\theta\in\mathbb S^{n-1}$, and the implicit
constant may depend on $n$, $k$, and $h$.
\end{lemma}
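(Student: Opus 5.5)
The plan is to write the outgoing resolvent as convolution against the outgoing Helmholtz Green's function $G_k$, which is the kernel of $(-\Delta-k^2-i0)^{-1}$. Then
\[
(-\Delta-k^2-i0)^{-1}h\,(x)=\int_{K} G_k(x-y)\,h(y)\,dy,\qquad K=\operatorname{supp}h,
\]
and we insert the large-argument asymptotics of $G_k$ for $|x|\to\infty$ with $y$ ranging over the compact set $K$. In every dimension $G_k(z)=c_n k^{n-2}(k|z|)^{-(n-2)/2}H^{(1)}_{(n-2)/2}(k|z|)$. The classical Hankel asymptotics give
\[
G_k(z)=c_n' k^{(n-3)/2}|z|^{-(n-1)/2}e^{ik|z|}\bigl(1+O(|z|^{-1})\bigr),\qquad |z|\to\infty,
\]
with $c_n'\neq0$, as recorded in \cite[Eq. (7.3.2)]{FeldmanSaloUhlmann}. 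Since $K$ is compact, $h\in L^1$, so all integrals converge absolutely.

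The key steps, in order, are as follows.

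First, for $x=r\theta$ and $y\in K\subset B(0,R_0)$, expand
\[
|r\theta-y|=r-\theta\cdot y+O(r^{-1})
\qquad\text{and}\qquad
|r\theta-y|^{-(n-1)/2}=r^{-(n-1)/2}\bigl(1+O(r^{-1})\bigr),
\]
uniformly in $\theta\in\mathbb S^{n-1}$ and $y\in K$.

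Second, use these to write
\[
e^{ik|r\theta-y|}=e^{ikr}e^{-ik\theta\cdot y}\bigl(1+O(k r^{-1})\bigr),
\]
where we use $|e^{is}-1|\le|s|$ to bound the phase error.

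Third, multiply the three factors, integrate against $h(y)$, and recognize the main term as
\[
\int e^{-ik\theta\cdot y}h(y)\,dy=\hat h(k\theta),
\]
with the paper's normalization of the Fourier transform absorbed into the constant $d_n$. The remainder is bounded by $C_{n,k,R_0}\,r^{-(n+1)/2}\|h\|_{L^1}$, and $\|h\|_{L^1}\le|K|^{1/2}\|h\|_{L^2}$.

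The one point needing care, and the main (mild) obstacle, is justifying that the $L^2_\delta\to L^2_{-\delta}$ outgoing resolvent from the limiting absorption principle, used earlier in the paper, actually coincides with convolution by the outgoing kernel $G_k$. I would handle this by density. For $\varepsilon>0$, the resolvent $(-\Delta-k^2-i\varepsilon)^{-1}$ is convolution with the exponentially decaying kernel $G_{\sqrt{k^2+i\varepsilon}}$. This kernel converges to $G_k$ locally uniformly away from $0$, and its local singularity $|z|^{2-n}$ (or $\log|z|$ when $n=2$) is locally integrable with a uniform bound. Consequently, for compactly supported $h\in L^2$ the convolutions converge pointwise and in $L^2_{\mathrm{loc}}$. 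This identifies the LAP limit with $G_k*h$ almost everywhere.

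A second subtlety is that for pointwise values at large $r$ no singularity of $G_k$ is encountered: once $r>2R_0$ we have $|r\theta-y|\ge r/2$, so only the smooth asymptotic regime of the Hankel function is used. This is also what makes the remainder uniform in $\theta$.
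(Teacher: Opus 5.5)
Your proposal is correct and is essentially the paper's argument: the paper's proof is the one-line remark that the lemma follows by integrating the large-argument expansion of the outgoing Helmholtz Green's function \cite[Eq. (7.3.2)]{FeldmanSaloUhlmann} over $\operatorname{supp}(h)$, and your expansions of $|r\theta-y|$ and of the Hankel factor for $r>2R_0$, uniform in $\theta$ and $y$, carry out exactly that computation. Your identification of the limiting-absorption resolvent with convolution by $G_k$ is a detail the paper leaves implicit, and it is sound; the one imprecision is that for $n\ge 4$ and general $h\in L^2$ the convolution integrals need not converge absolutely at every point of $\operatorname{supp}(h)$, because $|z|^{2-n}\notin L^2_{\mathrm{loc}}$, but the $L^2_{\mathrm{loc}}$ convergence you also establish is all that the identification requires.
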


We now derive the far-field expansion and amplitude formula for the
fractional operator.

\begin{lemma}\label{lem:farfield_expansion}
Let $n \geq 2$ and $k_\rho$ be the threshold from Lemma \ref{lem_LAP} with $l=[n/2]+1$ and $\delta=3/4$. Then, for $k > k_\rho$ and
$\alpha \in \mathbb{S}^{n-1}$,
the total field $\psi_{k,\alpha}$ defined by \eqref{eq:total_field}
satisfies
\begin{equation}\label{eq:farfield_lemma}
\psi_{k,\alpha}(r\theta)
=
e^{ik\alpha \cdot (r\theta)}
+
r^{-(n-1)/2}\, e^{ikr}\, A_k(\theta,\alpha)
+
o(r^{-(n-1)/2}),
\qquad r\to\infty,
\end{equation}
where the scattering amplitude is given by
\begin{equation}\label{eq:amplitude}
A_k(\theta,\alpha)
=
-2d_n k^{(n-1)/2}
\int_{\mathbb{R}^n}
e^{-ik\theta\cdot y}\,
V_k(y)\,
\psi_{k,\alpha}(y)\,dy.
\end{equation}
\end{lemma}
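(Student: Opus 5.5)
The plan is to write the scattered field as a free outgoing resolvent applied to a compactly supported function, and then read off its far field from Lemma~\ref{lem:laplacian_farfield} via the factorization \eqref{eq:factorization_eps}.

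\textbf{Step 1: resolvent representation.} Set $h := -V_k\psi_{k,\alpha}$. Then $\psi^{\mathrm{sc}}_{k,\alpha}=R_k^0 h$ with $\operatorname{supp} h\subset\Sigma$. This follows from the Neumann series \eqref{eq:psi_equals_Nseries} applied to $f=-V_k\psi^{\mathrm{in}}_{k,\alpha}$: summing the series gives $\psi^{\mathrm{sc}} = R_k^0 f - R_k^0V_k\psi^{\mathrm{sc}} = -R_k^0 V_k(\psi^{\mathrm{in}}+\psi^{\mathrm{sc}})$. Since $l=[n/2]+1$, we have $\psi^{\mathrm{sc}}\in H^l_{-3/4}$, so $h\in H^l$ with compact support. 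By Sobolev embedding $h$ is continuous, and pointwise evaluation of $\psi_{k,\alpha}$ makes sense.

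\textbf{Step 2: factorization.} Pass \eqref{eq:factorization_eps} to the limit $\varepsilon\to0^+$ to get
\[
R_k^0 h=\bigl((-\Delta)^{1/2}+k\bigr)(-\Delta-k^2-i0)^{-1}h .
\]
To justify this, I would instead use \eqref{eq:partial_fractions} in the limit:
\[
R_k^0 h=((-\Delta)^{1/2}+k)^{-1}h+2k(-\Delta-k^2-i0)^{-1}h .
\]
The convergence of each term holds in $L^2_{-\delta}$ by Corollary~\ref{cor:free_frac_Hl} and the Laplacian limiting absorption principle.

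\textbf{Step 3: the main term.} Lemma~\ref{lem:laplacian_farfield} gives the second term exactly:
\[
2d_nk^{(n-1)/2}r^{-(n-1)/2}e^{ikr}\hat h(k\theta)+O(r^{-(n+1)/2}).
\]
Since $\hat h(k\theta)=-\int e^{-ik\theta\cdot y}V_k\psi_{k,\alpha}\,dy$, this yields precisely $A_k$ as in \eqref{eq:amplitude}.

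\textbf{Step 4: the first term (main obstacle).} It remains to show that $((-\Delta)^{1/2}+k)^{-1}h(r\theta)=o(r^{-(n-1)/2})$, uniformly pointwise. Its kernel is the inverse Fourier transform of $(|\xi|+k)^{-1}$. This symbol is smooth away from $\xi=0$ and has a conic singularity at the origin, where it equals $k^{-1}-|\xi|/k^2+\dots$. Hence the kernel $G(z)$ is smooth for $z\neq0$ and decays like $|z|^{-n-1}$ at infinity, since the nonsmooth term $|\xi|$ contributes $c|z|^{-n-1}$. At the origin $G$ is locally integrable, behaving like $|z|^{1-n}$.

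To make this rigorous I would split $G$ with a cutoff $\chi(\xi)$. The low-frequency part $\chi(\xi)/(|\xi|+k)$ has symbol in the homogeneous class, and repeated integration by parts shows it yields decay $O(|z|^{-n-1})$. The high-frequency part $(1-\chi)/(|\xi|+k)$ is a classical symbol of order $-1$, whose kernel is rapidly decreasing away from $0$.

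Then, for $h$ compactly supported and integrable, and $r$ large, we get $|G*h(r\theta)|\le C\|h\|_{L^1}r^{-n-1}=o(r^{-(n-1)/2})$, using that $n+1>(n-1)/2$. An alternative, if the kernel estimate is awkward, is to write $((-\Delta)^{1/2}+k)^{-1}=\int_0^\infty e^{-t((-\Delta)^{1/2}+k)}\,dt$ and use the explicit Poisson kernel $c_n t/(t^2+|z|^2)^{(n+1)/2}$. Integrating in $t$ against $e^{-tk}$ gives the decay $|z|^{-n-1}$ directly for $|z|\ge1$; this is the route I would try first.

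Combining Steps 3 and 4 with $\psi_{k,\alpha}=\psi^{\mathrm{in}}_{k,\alpha}+\psi^{\mathrm{sc}}_{k,\alpha}$ gives \eqref{eq:farfield_lemma}.
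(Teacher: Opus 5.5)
Your proposal is correct and follows essentially the same route as the paper. Both arguments write $\psi^{\mathrm{sc}}_{k,\alpha}$ as the outgoing free resolvent applied to $-V_k\psi_{k,\alpha}$ and split $R_k^0$ into $2k(-\Delta-k^2-i0)^{-1}$ plus $((-\Delta)^{1/2}+k)^{-1}$. The paper reaches this splitting via the factorization \eqref{eq:factorization_eps} and the identity $((-\Delta)^{1/2}-k)u=((-\Delta)^{1/2}+k)^{-1}h$, which is exactly your direct use of \eqref{eq:partial_fractions}. Both then read off the main term from Lemma~\ref{lem:laplacian_farfield} and bound the remainder by $O(r^{-(n+1)})$ using the semigroup formula and the Poisson kernel; your Neumann-series justification of the resolvent representation is, if anything, slightly more explicit than the paper's.
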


\begin{proof}
By \eqref{eq:sc_equation} and \eqref{eq:total_field}, the scattered field
satisfies
\begin{equation*}
    P_k^0\,\psi^{\mathrm{sc}}_{k,\alpha}
    =
    -V_k\psi^{\mathrm{in}}_{k,\alpha}-V_k\psi^{\mathrm{sc}}_{k,\alpha}
    =
    -V_k\,\psi_{k,\alpha}.
\end{equation*}
Since $\psi^{\mathrm{sc}}_{k,\alpha}$ is outgoing, we have
\begin{equation*}
    \psi^{\mathrm{sc}}_{k,\alpha}
    =
    -R_k^0 h,
\end{equation*}
where $h = V_k\,\psi_{k,\alpha}$ is a compactly supported function.

By letting $\varepsilon\to0^+$ in \eqref{eq:factorization_eps}, in the sense
of outgoing boundary values, and using \eqref{frac_L_epsilon_eq_0} together
with \eqref{eq:laplacian_resolvent}, we obtain
\begin{equation*}
    R_k^0 = ((-\Delta)^{1/2}+k)\,(-\Delta - k^2 - i0)^{-1}.
\end{equation*}
Set 
\begin{equation*}
    u = (-\Delta - k^2 - i0)^{-1}h,
\end{equation*}
so that
\begin{equation}\label{eq:sc_factor}
\psi^{\mathrm{sc}}_{k,\alpha} = -((-\Delta)^{1/2}+k)\,u
=
 - 2ku - ((-\Delta)^{1/2}-k)u.
\end{equation}
The first term has known asymptotics from Lemma~\ref{lem:laplacian_farfield}:
\begin{equation}\label{eq:u_farfield}
u(r\theta) = d_n\, k^{(n-3)/2}\, r^{-(n-1)/2}\, e^{ikr}\,\hat{h}(k\theta)
+ O(r^{-(n+1)/2}).
\end{equation}
It remains to determine the far-field behaviour of $w:=((-\Delta)^{1/2}-k)u$. Since 
\begin{equation*}
    ((-\Delta) - k^2)u = h,
\end{equation*}
we obtain 
\begin{equation*}
    w=((-\Delta)^{1/2}+k)^{-1}h.
\end{equation*}
Since $-(-\Delta)^{1/2}$ is a non-positive operator, Corollary 3.6 and Theorem 1.10 in \cite{EngelNagel} imply that 
\begin{equation*}
    w
    =
    \int_0^\infty e^{-kt} e^{-t(-\Delta)^{1/2}}h\,dt.
\end{equation*}
Now the semigroup $e^{-t(-\Delta)^{1/2}}$ is given by convolution with the Poisson kernel,
\begin{equation*}
    e^{-t(-\Delta)^{1/2}}h(x)
    =
    c_n\int_{\mathbb{R}^n}
    \frac{t}{(t^2+|x-y|^2)^{(n+1)/2}}\,h(y)\,dy,
    \qquad
    c_n=\frac{\Gamma((n+1)/2)}{\pi^{(n+1)/2}},
\end{equation*}
see \cite[Chapter III, \S2, Proposition 5]{Stein1970} for the Poisson kernel and \cite[Proposition 16.4]{Garofalo2019} for its identification with the semigroup $e^{-t(-\Delta)^{1/2}}$. Therefore,
\begin{equation*}
    w(x)
    =
    c_n\int_0^\infty e^{-kt}
    \int_{\mathbb{R}^n}
    \frac{t}{(t^2+|x-y|^2)^{(n+1)/2}}\,h(y)\,dy\,dt.
\end{equation*}

Since $h\in L_c^2(\mathbb{R}^n)$, its support is contained in some ball $B_R$.
If $|x|>2R$ and $y\in\operatorname{supp}(h)$, then
\begin{equation*}
    |x-y|\ge |x|-|y|\ge \frac{|x|}{2}.
\end{equation*}
Hence
\begin{equation*}
    \frac{t}{(t^2+|x-y|^2)^{(n+1)/2}}
    \le
    \frac{t}{|x-y|^{n+1}}
    \le
    \frac{C\,t}{|x|^{n+1}}.
\end{equation*}
It follows that
\begin{equation*}
    |w(x)|
    \le
    \frac{C}{|x|^{n+1}}
    \int_0^\infty e^{-kt}t\,dt
    \int_{\mathbb{R}^n}|h(y)|\,dy
    \le
    \frac{C_k}{|x|^{n+1}}.
\end{equation*}
Thus
\begin{equation*}
    ((-\Delta)^{1/2}-k)u(x)=O(|x|^{-(n+1)}),
    \qquad |x|\to\infty.
\end{equation*}
Substituting into \eqref{eq:sc_factor} and using \eqref{eq:u_farfield}:
\begin{equation*}
\psi^{\mathrm{sc}}_{k,\alpha} (r\theta)
= - 2k\, d_n\, k^{(n-3)/2}\, r^{-(n-1)/2}\, e^{ikr}\, \hat{h}(k\theta)
+ O(r^{-(n+1)/2}).
\end{equation*}
Recalling that $h=V_k\psi_{k,\alpha}$, we obtain \eqref{eq:amplitude}.
\end{proof}

\section{The source-driven inverse problem}\label{sec:source}

\subsection{Geometric optics approximation}

Let $\delta>\frac12$ and let $l\geq0$ be an integer. For $k\geq1$, $\varepsilon\geq0$, $\alpha\in\mathbb{S}^{n-1}$, and
$h\in H^l_\delta(\mathbb{R}^n)$, we define
\begin{equation}\label{eq:conj_identity}
    \widetilde{R}_{k,\varepsilon}h
    :=
    e^{-ik\alpha\cdot(\cdot)}
    R_k^\varepsilon
    \bigl[e^{ik\alpha\cdot(\cdot)}h\bigr].
\end{equation}
We set $\widetilde{R}_k:=\widetilde{R}_{k,0}$. For fixed $k$ and
$\alpha$, multiplication by $e^{\pm ik\alpha\cdot x}$ is bounded on the
corresponding weighted Sobolev spaces. Hence, by
Corollary~\ref{cor:free_frac_Hl},
$\widetilde{R}_{k,\varepsilon}$ is a bounded operator from
$H^l_\delta(\mathbb{R}^n)$ to $H^l_{-\delta}(\mathbb{R}^n)$.
Moreover, Corollary~\ref{cor:free_frac_Hl} implies
\begin{equation}\label{eq:amp_res_convergence}
    \widetilde{R}_{k,\varepsilon}h
    \longrightarrow
    \widetilde{R}_k h
    \quad\text{in }H^l_{-\delta}(\mathbb{R}^n)
    \quad\text{as }\varepsilon\to0^+.
\end{equation}
In particular, by Corollary~\ref{cor:r_smooth},
\begin{equation}\label{eq:r_smooth}
    \widetilde{R}_k :
    C_c^\infty(\mathbb{R}^n)
    \longrightarrow
    C^\infty(\mathbb{R}^n).
\end{equation}

For $\varepsilon>0$ and $h\in C_c^\infty(\mathbb{R}^n)$, writing
$R_k^\varepsilon$ in Fourier variables gives
\begin{equation}\label{eq:amp_res}
    \widetilde{R}_{k,\varepsilon} h(x)
    =
    \frac{1}{(2\pi)^n}
    \int_{\mathbb{R}^n}
    e^{ix\cdot\eta}
    \frac{\widehat{h}(\eta)}
    {|k\alpha+\eta|-k-i\varepsilon}
    \,d\eta.
\end{equation}

We also define the transport operator: for $h\in C_c^\infty(\mathbb{R}^n)$,
\begin{equation}\label{eq:transport_def}
  T_\alpha h(x)
  := i\int_{-\infty}^{x\cdot\alpha} h(\tau\alpha+x_\perp)\,d\tau,
  \qquad x_\perp = x-(x\cdot\alpha)\,\alpha.
\end{equation}
As shown in Lemma~\ref{lem:transport}, see Appendix~\ref{sec:appendix},
\begin{equation*}
    T_\alpha h(x)=\lim_{\varepsilon\to 0^+}
\mathcal{F}^{-1}\left[\frac{\widehat{h}(\cdot)}{\alpha\cdot(\cdot)-i\varepsilon}\right](x).
\end{equation*}

For $M\in\mathbb{N}$ and $\varphi$ in the Schwartz space
$\mathscr{S}(\mathbb{R}^n)$, we write
\begin{equation*}
    \|\varphi\|_{\mathscr{S}_M}
    :=
    \max_{|\beta|\leq M}\,
    \sup_{y\in\mathbb{R}^n}\,
    \langle y\rangle^{M}\,
    |\partial^\beta\varphi(y)|.
\end{equation*}

The following result, proved in Appendix~\ref{sec:appendix}, provides
a pointwise geometric-optics approximation for $\widetilde{R}_k$.

\begin{proposition}\label{lem:GO_ptwise}
For every compact set $K\subset\mathbb{R}^n$ and multi-index $\beta$ there exist $C_{K,\beta}>0$ and $M_{\beta}\in\mathbb{N}$ such that, for every $\alpha\in\mathbb{S}^{n-1}$, $h\in C_c^\infty(\mathbb{R}^n)$, and $k> 1$,
\begin{equation}\label{eq:GO_main}
  \sup_{x\in K}\left|
    \partial_x^\beta\!\left(\widetilde{R}_k\, h(x) - T_\alpha h(x)\right)
  \right|
  \leq \frac{C_{K,\beta}}{k}\|\widehat{h}\|_{\mathscr{S}_{M_{\beta}}}.
\end{equation}
\end{proposition}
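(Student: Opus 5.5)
We work on the Fourier side, starting from the representation \eqref{eq:amp_res} of $\widetilde{R}_{k,\varepsilon}h$ and the representation of $T_\alpha h$ from Lemma~\ref{lem:transport} as the $\varepsilon\to0^+$ limit of $\mathcal F^{-1}[\widehat h/(\alpha\cdot\eta-i\varepsilon)]$. The key observation is that the symbol
\[
    p_k(\eta):=|k\alpha+\eta|-k
    =\frac{2k\alpha\cdot\eta+|\eta|^2}{|k\alpha+\eta|+k}
\]
is close to $\alpha\cdot\eta$ when $|\eta|\ll k$. We would take derivatives $\partial^\beta_x$ under the integral, which simply multiplies $\widehat h$ by $(i\eta)^\beta$. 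So it suffices to treat $\beta=0$, with $\widehat h$ replaced by $\eta^\beta\widehat h$; this costs $|\beta|$ extra powers in the $\mathscr S_M$ seminorm. Fix a cutoff $\chi\in C_c^\infty$ with $\chi=1$ on $B_{1/4}$ and $\operatorname{supp}\chi\subset B_{1/2}$. We then split
\[
    \widehat h=\chi(\eta/k)\widehat h+(1-\chi(\eta/k))\widehat h .
\]

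\emph{High frequencies} $|\eta|\gtrsim k$. Here $\widehat h$ is $O(k^{-M}\langle\eta\rangle^{-M+n+1})$ in the $\mathscr S_M$ norm. However, both symbols $1/(p_k-i\varepsilon)$ and $1/(\alpha\cdot\eta-i\varepsilon)$ are singular: the first on the sphere $|k\alpha+\eta|=k$, the second on the hyperplane $\alpha\cdot\eta=0$. We would not bound these pointwise. Instead, we use the one-dimensional Sokhotski--Plemelj estimate: in coordinates adapted to the singular hypersurface, $\lim_\varepsilon\int \varphi(s)/(s-i\varepsilon)\,ds$ is bounded by $C\|\varphi\|_{C^1}$ times a weight. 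For the sphere, use polar coordinates centred at $-k\alpha$ with radial variable $t=|k\alpha+\eta|$. The limit is then bounded by $C\sup(|\varphi|+|\partial_t\varphi|)\,k^{n-1}$, which is harmless against $k^{-M}$ for large $M$. Since $x\in K$ is compact, the factor $e^{ix\cdot\eta}$ contributes derivatives bounded by $C_K\langle\eta\rangle$, and the whole contribution is $O(k^{-1})\|\widehat h\|_{\mathscr S_M}$. For $T_\alpha$, the same argument runs in the coordinate $s=\alpha\cdot\eta$.

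\emph{Low frequencies} $|\eta|\le k/2$. Here $|k\alpha+\eta|+k\ge k$, and we write
\[
    p_k(\eta)=(\alpha\cdot\eta)\,a_k(\eta)+b_k(\eta),
    \qquad a_k=\frac{2k}{|k\alpha+\eta|+k},
    \qquad b_k=\frac{|\eta|^2}{|k\alpha+\eta|+k}=O(|\eta|^2/k).
\]
The zero set of $p_k$ in this region is a smooth graph $\alpha\cdot\eta=\phi_k(\eta_\perp)$ with $\phi_k=-|\eta_\perp|^2/(2k)+O(|\eta_\perp|^4/k^3)$, which is an $O(|\eta|^2/k)$ perturbation of the hyperplane. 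In the variable $s=\alpha\cdot\eta$, for fixed $\eta_\perp$, we compare
\[
    \lim_{\varepsilon\to0^+}\int\frac{\varphi(s,\eta_\perp)}{p_k-i\varepsilon}\,ds
    \qquad\text{with}\qquad
    \lim_{\varepsilon\to0^+}\int\frac{\varphi(s,\eta_\perp)}{s-i\varepsilon}\,ds,
\]
where $\varphi=e^{ix\cdot\eta}\chi(\eta/k)\widehat h$. Both limits equal a principal value plus $i\pi$ times a residue term, since $\partial_s p_k>0$ near the zero, as $a_k\approx 1$. The difference therefore consists of three pieces. The first is $i\pi\bigl[\varphi(\phi_k)/\partial_sp_k-\varphi(0)\bigr]$, which is $O(\|\varphi\|_{C^1}\langle\eta_\perp\rangle^2/k)$. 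The second is the principal value of $\varphi/p_k-\varphi/s$, which we handle by subtracting the Taylor term at the respective zeros and Taylor-expanding; this gives a bound by $\langle\eta\rangle^{N}/k$ times $C^2$ norms of $\varphi$. The third is a boundary contribution from the cutoff region, where $\widehat h$ is already $O(k^{-M})$. Integrating over $\eta_\perp$ against the decay of $\widehat h$ yields $C_K k^{-1}\|\widehat h\|_{\mathscr S_M}$.

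The main obstacle is this low-frequency comparison: making the principal-value and residue estimates uniform in $\varepsilon\to0^+$, $\alpha$, and $k$, with only polynomial losses in $\langle\eta\rangle$. The first thing to try is to change variables $s\mapsto p_k(s,\eta_\perp)$, which is a diffeomorphism on the low-frequency region with Jacobian $1+O(|\eta|/k)$. This converts the first integral into $\int\tilde\varphi(\sigma)/(\sigma-i\varepsilon)\,d\sigma$ with $\tilde\varphi-\varphi=O(\langle\eta\rangle^2/k)$ in $C^1$, and the standard bound $|\lim_\varepsilon\int\psi/(\sigma-i\varepsilon)|\le C\int(|\psi|+|\psi'|)\langle\sigma\rangle\,d\sigma$ then closes the estimate. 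Rotation invariance makes the constants independent of $\alpha$.
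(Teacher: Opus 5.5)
Your argument is correct, but it takes a different route from the paper in both frequency regimes.

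\textbf{High frequencies.} For $|\eta|\gtrsim k$ the paper never works with the singular symbol on the Fourier side. It bounds $\widetilde R_k h^{\mathrm{out}}$ by the weighted resolvent estimate of Corollary~\ref{cor:free_frac_Hl} plus Sobolev embedding. This costs a factor $k^{2l}$ from conjugation by $e^{ik\alpha\cdot x}$, which is absorbed by $\|h^{\mathrm{out}}\|_{H^l_\delta}=O(k^{-L})$. Your Sokhotski--Plemelj estimate in polar coordinates about $-k\alpha$, with loss $k^{n-1}$, is a self-contained replacement.

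\textbf{Low frequencies.} For $|\eta|\le k/2$ the paper writes $(p_k-i\varepsilon)^{-1}=a_k/(\sigma_k-i\varepsilon a_k)$. It then deforms this to $(\eta_1-i\varepsilon)^{-1}$ through the family $X_{\theta,\vartheta}$ and controls the three error terms with the logarithmic integration-by-parts Lemma~\ref{lem:aux_sing}. Your substitution $\sigma=p_k(s,\eta_\perp)$ instead straightens the sphere $|k\alpha+\eta|=k$ exactly. The denominator becomes $\sigma-i\varepsilon$ for every $\varepsilon>0$, and all the error moves into the amplitude.

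This is sound. On $|\eta|\le k/2$ one has:
\begin{itemize}
\item $\partial_s p_k=(k+s)/|k\alpha+\eta|\ge 1/3$;
\item $p_k-s=|\eta_\perp|^2/(|k\alpha+\eta|+k+s)\in[0,|\eta_\perp|^2/k]$;
\item $\partial_s^2 p_k=|\eta_\perp|^2/|k\alpha+\eta|^3$.
\end{itemize}
Together these give $\tilde\varphi-\varphi=O(\langle\eta\rangle^2/k)$ in $C^1$, as you claim.

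\textbf{Comparison.} Your route is more elementary. It avoids both the $\theta$-interpolation (the term $E_2$, which arises only from the factor $a_k$ multiplying $i\varepsilon$) and the bounds on $\log X$. The paper's route works with the $n$-dimensional integral directly, never inverts $s\mapsto p_k$, and reuses resolvent bounds it has already established.

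\textbf{Two small repairs.}
\begin{itemize}
\item The one-dimensional bound you quote, $C\int(|\psi|+|\psi'|)\langle\sigma\rangle\,d\sigma$, is false. Take $\psi_\delta(\sigma)=\tanh(\sigma/\delta)\,b(\sigma)$, with $b$ a fixed bump equal to $1$ on $[-1,1]$. The right-hand side stays bounded while the left-hand side grows like $\log(1/\delta)$. Use instead $2\sup_{|\sigma|\le 1}|\psi'|+\pi|\psi(0)|+\int_{|\sigma|>1}|\psi|\,d\sigma$, which is exactly what your $C^1$ bound on $\tilde\varphi-\varphi$ controls.
\item The inverse $s(\sigma)$ is defined only on the $p_k$-image of the low-frequency interval. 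For $|\eta_\perp|$ comparable to $k$, this image need not contain the $\sigma$-support of $\varphi(\cdot,\eta_\perp)$. For such $\eta_\perp$, bound the two one-dimensional integrals separately; each is $O(k^{-M})$ with integrable decay in $\eta_\perp$.
\end{itemize}
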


Let $\alpha\in\mathbb{S}^{n-1}$ and $\chi\in C_c^\infty(\mathbb{R}^n)$. Define
\begin{equation}\label{eq:b0_def}
    b_0(x) := T_\alpha \chi(x)
    = i \int_{-\infty}^{x \cdot \alpha}
    \chi(\tau\alpha + x_\perp)\, d\tau,
    \qquad x_\perp = x - (x \cdot \alpha)\alpha.
\end{equation}
Note that $b_0 \in C^\infty(\mathbb{R}^n)$.

The uniform resolvent estimates of Section~\ref{sec:direct} transfer to the
conjugated operators. Let $\widetilde{P}^0_k$ denote the Fourier multiplier
with symbol $|k\alpha+\eta|-k$, so that
\begin{equation}\label{eq:tildR_tildP0}
    \widetilde{P}^0_k = e^{-ik\alpha\cdot (\cdot)}\,P^0_k\, e^{ik\alpha\cdot(\cdot)}
    \qquad
    \text{and}
    \qquad
    \widetilde R_{k,\varepsilon}
    =
    (\widetilde P_k^0-i\varepsilon)^{-1}
\end{equation}
for $\varepsilon>0$. We also set $\widetilde{P}_k := \widetilde{P}^0_k + V_k$.

\begin{lemma}\label{lem:conj_resolvent}
Let $n\geq2$, $\delta>\frac12$, and let $l\geq0$ be an integer.
\begin{enumerate}
\item[(i)] For all $k\geq1$, $\varepsilon\geq0$, $\alpha\in\mathbb{S}^{n-1}$,
and $f\in H^l_\delta(\mathbb{R}^n)$,
\begin{equation}\label{eq:conj_Hl_est}
\|\widetilde{R}_{k,\varepsilon}f\|_{H^l_{-\delta}}
\leq C_{\delta,l}\,\|f\|_{H^l_\delta},
\end{equation}
where $\widetilde{R}_{k,0}:=\widetilde{R}_k$ and $C_{\delta,l}$ is the
constant of Corollary~\ref{cor:free_frac_Hl}; in particular, it does not
depend on $k$, $\varepsilon$, or $\alpha$.
\item[(ii)] There exists $k_{\rho,\delta,l}>0$ such that for
$k>k_{\rho,\delta,l}$ and $f\in H^l_\delta(\mathbb{R}^n)$, the limit
\begin{equation*}
\widetilde{G}_k f := \lim_{\varepsilon\to0^+}(\widetilde{P}_k - i\varepsilon)^{-1}f
\end{equation*}
exists in $H^l_{-\delta}(\mathbb{R}^n)$ and satisfies
\begin{equation}\label{eq:conj_LAP_est}
\|\widetilde{G}_k f\|_{H^l_{-\delta}} \leq C_{\rho,l,\delta}\,\|f\|_{H^l_\delta},
\end{equation}
with $C_{\rho,l,\delta}$ and $k_{\rho,\delta,l}$ independent of $\alpha$. Moreover,
\begin{equation}\label{eq:conj_LAP_identity}
e^{ik\alpha\cdot(\cdot)}\,\widetilde{G}_k f
= (P_k - i0)^{-1}\bigl(e^{ik\alpha\cdot(\cdot)}f\bigr).
\end{equation}
\end{enumerate}
\end{lemma}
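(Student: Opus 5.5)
Part (i) comes down to the fact that conjugation by the modulation $e^{ik\alpha\cdot x}$ is harmless. The plan is to avoid going through $H^l_{\pm\delta}$ estimates for the multiplication operators, since those would produce $k$-dependent constants. Instead we use that $\widetilde R_{k,\varepsilon}$ is a Fourier multiplier with symbol $m(\eta)=(|k\alpha+\eta|-k-i\varepsilon)^{-1}$, which commutes with $(1-\Delta)^{l/2}$. This reduces everything to $l=0$.

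For $l=0$, multiplication by $e^{\pm ik\alpha\cdot x}$ is an isometry of $L^2_{\pm\delta}$, because the weight $\langle x\rangle^{\pm\delta}$ is unaffected by a unimodular factor. Hence
\[
\|\widetilde R_{k,\varepsilon}f\|_{L^2_{-\delta}}=\|R_k^\varepsilon(e^{ik\alpha\cdot(\cdot)}f)\|_{L^2_{-\delta}}\le C_\delta\|e^{ik\alpha\cdot(\cdot)}f\|_{L^2_\delta}=C_\delta\|f\|_{L^2_\delta}
\]
by Lemma~\ref{lem:free_frac}, uniformly in $k,\varepsilon,\alpha$. For general $l$ we write $\|\widetilde R_{k,\varepsilon}f\|_{H^l_{-\delta}}=\|\widetilde R_{k,\varepsilon}(1-\Delta)^{l/2}f\|_{L^2_{-\delta}}$. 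The commutation holds for $\varepsilon>0$ since both operators are multipliers. For $\varepsilon=0$ we define $\widetilde R_k$ by the conjugation formula \eqref{eq:conj_identity}; the $l=0$ estimate then passes to the limit via \eqref{eq:amp_res_convergence}, and the commutation persists in the limit (test against $C_c^\infty$ functions). The constant is exactly $C_{\delta,l}$ of Corollary~\ref{cor:free_frac_Hl}. This ordering matters: going through $l=0$ first is what keeps the constant free of $k$.

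Part (ii) repeats the Neumann-series argument of Lemma~\ref{lem_LAP} with $R_k^\varepsilon$ replaced by $\widetilde R_{k,\varepsilon}$. Potential multiplication commutes with the modulation, so $\widetilde P_k-i\varepsilon=e^{-ik\alpha\cdot}(P_k-i\varepsilon)e^{ik\alpha\cdot}$ and $(\widetilde P_k-i\varepsilon)^{-1}=e^{-ik\alpha\cdot}(P_k-i\varepsilon)^{-1}e^{ik\alpha\cdot}$ for $\varepsilon>0$.

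From $\psi_\varepsilon=\widetilde R_{k,\varepsilon}f-\widetilde R_{k,\varepsilon}V_k\psi_\varepsilon$, the bound \eqref{eq:Vk_bound} together with (i) gives
\[
\|\widetilde R_{k,\varepsilon}V_k\|_{H^l_{-\delta}\to H^l_{-\delta}}\le C_{\rho,l,\delta}/k<1
\]
for $k$ large. Here the threshold depends only on $C_{\delta,l}$ and $\rho$, not on $\alpha$, because $V_k$ does not involve $\alpha$. The Neumann series then converges uniformly in $\varepsilon\ge0$. The convergence as $\varepsilon\to0^+$ follows by the same tail-plus-induction argument as before, using \eqref{eq:amp_res_convergence} in place of \eqref{eq:free_limit_Hl}. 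This yields existence of $\widetilde G_kf$ and the estimate \eqref{eq:conj_LAP_est} with $\alpha$-independent constants.

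For \eqref{eq:conj_LAP_identity}, we multiply $(\widetilde P_k-i\varepsilon)^{-1}f$ by $e^{ik\alpha\cdot x}$ to get $(P_k-i\varepsilon)^{-1}(e^{ik\alpha\cdot}f)$, and let $\varepsilon\to0^+$. Multiplication by $e^{ik\alpha\cdot x}$ is continuous on $H^l_{-\delta}$ for fixed $k$, and $e^{ik\alpha\cdot}f\in H^l_\delta$. The right-hand side therefore converges by Lemma~\ref{lem_LAP}, possibly after enlarging the threshold to exceed $k_\rho$ of that lemma. The limits must agree, which gives the identity.

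The main obstacle is only the bookkeeping in (i): keeping the constant independent of $k$ and $\alpha$ requires using the multiplier commutation rather than crude Sobolev bounds for the modulation.
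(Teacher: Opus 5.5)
Your proposal is correct and follows essentially the same route as the paper. For (i) you use the $L^2_{\pm\delta}$-isometry of the modulation together with the commutation of the multiplier $\widetilde R_{k,\varepsilon}$ with $(1-\Delta)^{l/2}$. For (ii) you use the Neumann series for $I+\widetilde R_{k,\varepsilon}V_k$ with the tail-plus-induction limit, and the conjugation identity $\widetilde P_k=e^{-ik\alpha\cdot(\cdot)}P_k e^{ik\alpha\cdot(\cdot)}$.

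The only differences are cosmetic. At $\varepsilon=0$ you pass the $l=0$ bound to the limit and then commute, whereas the paper proves the $H^l$ bound for $\varepsilon>0$ and passes it to the limit directly via \eqref{eq:amp_res_convergence}. Your explicit remark that the threshold must exceed $k_\rho$ of Lemma~\ref{lem_LAP} is a detail the paper leaves implicit.
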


\begin{proof}
(i) Let $\varepsilon>0$. Since $|e^{\pm ik\alpha\cdot x}|=1$, multiplication by $e^{\pm ik\alpha\cdot x}$
is an isometry of $L^2_{\pm\delta}(\mathbb{R}^n)$. Therefore, by \eqref{eq:conj_identity}, we conclude that
\begin{equation*}
    \|\widetilde{R}_{k,\varepsilon}\|_{L^2_\delta\to L^2_{-\delta}} = \|R^\varepsilon_k\|_{L^2_\delta\to L^2_{-\delta}}.
\end{equation*}
Thus, Lemma~\ref{lem:free_frac}
holds for $\widetilde{R}_{k,\varepsilon}$. Since
$\widetilde{R}_{k,\varepsilon}$ is a Fourier multiplier, it commutes with
$(1-\Delta)^{l/2}$. Hence, by repeating steps of the proof of Corollary~\ref{cor:free_frac_Hl}, we obtain \eqref{eq:conj_Hl_est} for $\varepsilon>0$. 
Letting $\varepsilon\to0^+$ in \eqref{eq:conj_Hl_est} and using
\eqref{eq:amp_res_convergence}, we obtain \eqref{eq:conj_Hl_est}
for $\varepsilon=0$.

\emph{(ii)} Due to \eqref{eq:tildR_tildP0}, we have
\begin{equation}\label{eq:tPk_tPk0_tRk}
    \widetilde P_k-i\varepsilon
    =
    (\widetilde P_k^0-i\varepsilon)
    \bigl(I+\widetilde R_{k,\varepsilon}V_k\bigr).
\end{equation}
By part (i) and \eqref{eq:Vk_bound}, for sufficiently large $k>0$,
\begin{equation*}
    \|\widetilde R_{k,\varepsilon}V_k\|_{H^l_{-\delta}\to H^l_{-\delta}}
    \leq
    \frac{C_{\rho,l,\delta}}{k}<1,
\end{equation*}
uniformly in $\varepsilon\geq0$ and
$\alpha\in\mathbb S^{n-1}$.

Therefore, for $\varepsilon>0$ and sufficiently large $k>0$,
the operator $I+\widetilde R_{k,\varepsilon}V_k$ is invertible on
$H^l_{-\delta}(\mathbb R^n)$, with
\[
\bigl(I+\widetilde R_{k,\varepsilon}V_k\bigr)^{-1}
=
\sum_{j=0}^\infty
(-\widetilde R_{k,\varepsilon}V_k)^j.
\]
Consequently, by \eqref{eq:tPk_tPk0_tRk} and \eqref{eq:tildR_tildP0},
\begin{equation}\label{eq:conj_neumann}
(\widetilde P_k-i\varepsilon)^{-1}f
=
\sum_{j=0}^\infty
(-\widetilde R_{k,\varepsilon}V_k)^j
\widetilde R_{k,\varepsilon}f.
\end{equation}
The series converges in $H^l_{-\delta}(\mathbb R^n)$, and part (i) gives
\[
\|(\widetilde P_k-i\varepsilon)^{-1}f\|_{H^l_{-\delta}}
\leq
\frac{C_{\delta,l}}
{1-C_{\rho,l,\delta}/k}
\|f\|_{H^l_\delta}
\leq
C_{\rho,l,\delta}\|f\|_{H^l_\delta},
\]
for sufficiently large $k>0$.

By \eqref{eq:amp_res_convergence}, one can prove
inductively that, for every fixed $j\geq0$,
\begin{equation*}
    (-\widetilde R_{k,\varepsilon}V_k)^j
    \widetilde R_{k,\varepsilon}f
    \longrightarrow
    (-\widetilde R_kV_k)^j\widetilde R_kf
\end{equation*}
in $H^l_{-\delta}(\mathbb R^n)$ as $\varepsilon\rightarrow 0^+$.
Since the Neumann series in \eqref{eq:conj_neumann} has a geometric tail
bounded uniformly in $\varepsilon$, we may pass to the limit and obtain
\[
\widetilde G_k f
:=
\lim_{\varepsilon\to0^+}
(\widetilde P_k-i\varepsilon)^{-1}f
=
\sum_{j=0}^\infty
(-\widetilde R_kV_k)^j\widetilde R_kf
\]
in $H^l_{-\delta}(\mathbb R^n)$. The preceding uniform estimate also gives \eqref{eq:conj_LAP_est}.

Finally, by recalling the definitions of $\widetilde{P}_k$ and $\widetilde{P}^0_k$ , we write
\begin{equation*}
    \widetilde{P}_k = e^{-ik\alpha\cdot (\cdot)}\,P^0_k\, e^{ik\alpha\cdot(\cdot)} + e^{-ik\alpha\cdot (\cdot)}\,V_k\, e^{ik\alpha\cdot(\cdot)} = e^{-ik\alpha\cdot (\cdot)}\,P_k\, e^{ik\alpha\cdot(\cdot)}.
\end{equation*}
Thus, for every $\varepsilon>0$,
\[
e^{ik\alpha\cdot(\cdot)}
(\widetilde P_k-i\varepsilon)^{-1}
=
(P_k-i\varepsilon)^{-1}e^{ik\alpha\cdot(\cdot)}.
\]
Applying this identity to $f$ and letting $\varepsilon\to0^+$ gives \eqref{eq:conj_LAP_identity}.
\end{proof}

\begin{lemma}\label{lem:Born_GO}
    Let $\alpha\in\mathbb{S}^{n-1}$, $\chi\in C_c^\infty(\mathbb{R}^n)$, $f=e^{ik\alpha\cdot(\cdot)}\chi$, and $x\in \mathbb{R}^n$. Then, there exists $k_{\rho}>0$ such that for $k>k_{\rho}$, the outgoing solution
    $\psi^{f}$ satisfies
    \begin{equation}\label{eq:Born_GO}
        \psi^{f}(x)
        =  \psi_0(x) - e^{ik\alpha\cdot x}\,
        \frac{g^2}{k - \Omega}\, T_\alpha(\rho\, b_0)(x)
        + O_{\rho,\chi,x}(k^{-2}),
    \end{equation}
    where $\psi_0 = R_k^0 f$ and
    $b_0$ is defined by \eqref{eq:b0_def}.
\end{lemma}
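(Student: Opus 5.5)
Conjugating by the plane wave reduces everything to Lemma~\ref{lem:conj_resolvent}(ii), and the expansion then comes from the Neumann series there, with Proposition~\ref{lem:GO_ptwise} controlling each term. By \eqref{eq:conj_LAP_identity},
\[
\psi^f = e^{ik\alpha\cdot(\cdot)}\,\widetilde G_k\chi,
\qquad
\widetilde G_k\chi=\sum_{j\ge0}(-\widetilde R_kV_k)^j\widetilde R_k\chi .
\]
I would split this as
\[
\widetilde G_k\chi=\widetilde R_k\chi-\widetilde R_kV_k\widetilde R_k\chi+\widetilde R_kV_k\widetilde R_kV_k\,\widetilde G_k\chi .
\]
The first term gives exactly $\psi_0$, since $e^{ik\alpha\cdot x}\widetilde R_k\chi=R_k^0 f$ by \eqref{eq:conj_identity}.

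\textbf{Step 1: the remainder $\widetilde R_kV_k\widetilde R_kV_k\widetilde G_k\chi$ is $O(k^{-2})$ pointwise at $x$.} Take $l>n/2$ and $\delta=3/4$. By Lemma~\ref{lem:conj_resolvent}(ii), $\|\widetilde G_k\chi\|_{H^l_{-\delta}}\le C\|\chi\|_{H^l_\delta}$. Each factor $V_k$ contributes a factor $C/k$ via \eqref{eq:Vk_bound}, and each $\widetilde R_k$ is bounded from $H^l_\delta$ to $H^l_{-\delta}$ uniformly in $k$ by Lemma~\ref{lem:conj_resolvent}(i). Hence the remainder is $O(k^{-2})$ in $H^l_{-\delta}$. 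Sobolev embedding on a neighbourhood of $x$, where the weight is harmless, turns this into a pointwise $O_{\rho,\chi,x}(k^{-2})$ bound.

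\textbf{Step 2: the first Born term.} We have $\widetilde R_kV_k\widetilde R_k\chi=\frac{g^2}{k-\Omega}\widetilde R_k(\rho\,\widetilde R_k\chi)$. I would replace it by $\frac{g^2}{k-\Omega}T_\alpha(\rho b_0)$ at cost $O(k^{-2})$, in two sub-steps.
\begin{enumerate}
\item[(a)] Write $\rho\widetilde R_k\chi=\rho b_0+\rho(\widetilde R_k\chi-b_0)$. Apply Proposition~\ref{lem:GO_ptwise} to $h=\rho b_0\in C_c^\infty$, a fixed function independent of $k$. This gives $\widetilde R_k(\rho b_0)(x)=T_\alpha(\rho b_0)(x)+O(k^{-1})$, and after the prefactor $g^2/(k-\Omega)$ this is $O(k^{-2})$.
\item[(b)] Control $e:=\rho(\widetilde R_k\chi-b_0)$. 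Applying Proposition~\ref{lem:GO_ptwise} with $K=\operatorname{supp}\rho$ and all $|\beta|\le l$ gives $\|e\|_{C^l}\le C/k$, with $C$ depending on $\chi$. Since $\operatorname{supp}e\subset\operatorname{supp}\rho$, this yields $\|e\|_{H^l_\delta}\le C/k$. Then Lemma~\ref{lem:conj_resolvent}(i) and Sobolev embedding give $|\widetilde R_k e(x)|\le C/k$, which is again $O(k^{-2})$ after the prefactor.
\end{enumerate}
Multiplying back by $e^{ik\alpha\cdot x}$ yields \eqref{eq:Born_GO}.

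\textbf{Expected obstacle.} The delicate point is sub-step (b): applying Proposition~\ref{lem:GO_ptwise} uniformly with enough derivatives on the compact set $\operatorname{supp}\rho$ to control the $H^l_\delta$ norm of the error. Here $\|\widehat\chi\|_{\mathscr S_M}$ is finite and independent of $k$, so the bound goes through. A secondary point is passing from weighted-Sobolev bounds to pointwise values at $x$. For this I would localize with a cutoff equal to $1$ near $x$ and use that $H^l(B)\hookrightarrow C^0(B)$ for $l>n/2$. Finally, the outgoing solution obtained from Lemma~\ref{lem_LAP} coincides with the conjugated one by \eqref{eq:conj_LAP_identity}, which justifies the representation used at the start.
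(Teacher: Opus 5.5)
Your proposal is correct and follows the paper's proof. The Neumann series is split into $\psi_0$, the first Born term, and a remainder; the remainder is shown to be $O(k^{-2})$ using the uniform conjugated resolvent bounds of Lemma~\ref{lem:conj_resolvent} together with Sobolev embedding, and the Born term is handled by Proposition~\ref{lem:GO_ptwise}.

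The only difference is in the Born term. You apply the proposition to the fixed function $\rho b_0$ and absorb $\rho(\widetilde R_k\chi-b_0)$ through the uniform resolvent bound, which requires the geometric-optics estimate with up to $l$ derivatives on $\operatorname{supp}\rho$. The paper instead applies the proposition to the $k$-dependent function $\rho\widetilde R_k\chi$, using uniform $C^M$ bounds on $\widetilde R_k\chi$. It then compares $T_\alpha(\rho\widetilde R_k\chi)$ with $T_\alpha(\rho b_0)$ directly from the line-integral formula, which needs only the $\beta=0$ estimate.
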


\begin{proof}
Fix $x\in \mathbb{R}^n$. We set $\delta = 3/4$ and $l = [n/2]+1$. Let $k_\rho\geq\max\{1,2\Omega\}$ be sufficiently large so that the conclusions of
Lemmas~\ref{lem_LAP} and \ref{lem:conj_resolvent} hold for $k>k_\rho$.

Now, we assume that $k>k_{\rho}$. By Lemma~\ref{lem_LAP},
\begin{equation}\label{eq:Neumann_psi}
    \psi^{f}
    =
    \sum_{m=0}^\infty (-R_k^0 V_k)^m \psi_0
    \quad \text{in } H^l_{-\delta}(\mathbb{R}^n).
\end{equation}
Fix a closed ball $B$ with $x\in B$. Since $l>n/2$, the Sobolev embedding
$H^l_{-\delta}(\mathbb{R}^n)\hookrightarrow C^0(B)$ holds, so the series
\eqref{eq:Neumann_psi} converges in $C^0(B)$. As each term is continuous,
the identity below may be evaluated pointwise at $x$, and any tail is
controlled in $C^0(B)$ by its $H^l_{-\delta}$-norm.

Split the series
\begin{equation}\label{eq:split}
    \psi^{f}
    =
    \psi_0
    - R_k^0 V_k \psi_0
    + \sum_{m=2}^{\infty} (-1)^m (R_k^0 V_k)^m \psi_0.
\end{equation}
We recall \eqref{eq:conj_identity} and the definition of $\psi_0$ to conclude that
\begin{equation}\label{eq:sec_term0}
    R_k^0 V_k \psi_0 = e^{ik\alpha\cdot (\cdot)} \frac{g^2}{k - \Omega} \, \widetilde{R}_k \, \rho \,\widetilde{R}_k \, \chi
\end{equation}
By Proposition~\ref{lem:GO_ptwise} applied
with $h = \rho\widetilde{R}_k \chi$ and $\beta=0$, we obtain
\begin{equation*}
\widetilde{R}_k \, \left(\rho \,\widetilde{R}_k \, \chi\right) (x)
= T_\alpha\left(\rho \,\widetilde{R}_k\chi\right)(x) + O\left( k^{-1} \|\widehat{\rho\widetilde{R}_k \chi}\|_{\mathscr{S}_M}\right).
\end{equation*}
for some $M\in \mathbb{N}$. Since $\rho\widetilde{R}_k\chi$ is smooth and supported in $\Sigma$,
\[
\|\widehat{\rho\widetilde{R}_k\chi}\|_{\mathscr S_M}
\leq
C_{\rho,M}
\|\rho\widetilde{R}_k\chi\|_{C^M(\Sigma)}
\leq
C_{\rho,M}
\|\widetilde{R}_k\chi\|_{C^M(\Sigma)}.
\]
By Sobolev embedding and Lemma~\ref{lem:conj_resolvent}(i), applied with
a sufficiently large Sobolev order, we obtain $\|\widetilde{R}_k\chi\|_{C^M(\Sigma)}
\leq C_{\chi,M}$, uniformly in $k$ and $\alpha$. Therefore,
\begin{equation*}
\widetilde{R}_k \, \left(\rho \,\widetilde{R}_k \, \chi\right) (x)
= T_\alpha\left(\rho \,\widetilde{R}_k\chi\right)(x) + O_{\rho,\chi,x}(k^{-1}).
\end{equation*}
Moreover, since $\rho$ is supported in $\Sigma$,
\begin{multline*}
   \left|T_\alpha\left(\rho \,\widetilde{R}_k\chi - \rho b_0\right)(x) \right| = \left|T_\alpha\left(\rho \,\widetilde{R}_k\chi - \rho \, T_\alpha\chi\right)(x) \right|\\
   \leq \int_{\mathbb{R}}\bigl|\rho(x+t\alpha)\bigr|\,dt\;
\sup_{\Sigma}\left|\widetilde{R}_k\chi - T_\alpha\chi\right| = O_{\rho,\chi}(k^{-1}).
\end{multline*}
Combining the last two estimates and \eqref{eq:sec_term0} gives
\begin{equation}\label{eq:second_term}
    R_k^0 V_k \psi_0(x) = e^{ik\alpha\cdot x}\,
        \frac{g^2}{k - \Omega}\, T_\alpha(\rho\, b_0)(x)
        + O_{\rho,\chi,x}(k^{-2}).
\end{equation}

Next, we estimate the remainder. We write
\begin{equation*}
    r := \sum_{m=2}^{\infty} (-1)^m (R_k^0 V_k)^m \psi_0 = \sum_{m=0}^{\infty} (-1)^m (R_k^0 V_k)^m R_k^0 \left(V_k R_k^0 V_k R_k^0 f\right).
\end{equation*}
We denote $\psi_1 = R_k^0 V_k R_k^0 f$. Applying \eqref{eq:LAP_def} and
\eqref{eq:psi_equals_Nseries} with $f$ replaced by $V_k\psi_1$ gives
\begin{equation}\label{eq:r_formula}
r = (P_k - i0)^{-1}(V_k\psi_1).
\end{equation}
Therefore, by \eqref{eq:conj_LAP_identity},
\begin{equation*}
    r = e^{ik\alpha\cdot x}\,\widetilde{G}_k(V_k\, e^{-ik\alpha\cdot (\cdot)} \, \psi_1).
\end{equation*}
By \eqref{eq:conj_LAP_est} and \eqref{eq:Vk_bound}, we estimate
\begin{equation*}
\|\widetilde{G}_k(V_k\, e^{-ik\alpha\cdot (\cdot)} \, \psi_1)\|_{H^l_{-\delta}}
\leq C_\rho \,\|V_k\, e^{-ik\alpha\cdot (\cdot)} \, \psi_1\|_{H^l_\delta}
\leq \frac{C_\rho}{k}\,\|e^{-ik\alpha\cdot (\cdot)} \, \psi_1\|_{H^l_{-\delta}},
\end{equation*}
By \eqref{eq:conj_Hl_est} and \eqref{eq:Vk_bound},
\begin{multline*}
    \|e^{-ik\alpha\cdot (\cdot)} \, \psi_1\|_{H^l_{-\delta}}  
    = 
    \|e^{-ik\alpha\cdot (\cdot)} \, R_k^0 \, V_k \, R_k^0 \, e^{ik\alpha\cdot (\cdot)} \, \chi\|_{H^l_{-\delta}} 
    = 
    \|\widetilde{R}_k \, V_k \, \widetilde{R}_k \, \chi\|_{H^l_{-\delta}} \leq \frac{C_\rho}{k} \, \|\chi\|_{H^l_{\delta}} 
\end{multline*}
Since $l>n/2$, the Sobolev embedding gives
\begin{equation*}
|r(x)| = |\widetilde{G}_k(V_k\, e^{-ik\alpha\cdot (\cdot)} \, \psi_1)(x)|
\leq C_x\,\|\widetilde{G}_k(V_k\, e^{-ik\alpha\cdot (\cdot)} \, \psi_1)\|_{H^l_{-\delta}}
= O_{\rho,\chi,x}(k^{-2}).
\end{equation*}
Substituting this and \eqref{eq:second_term} into \eqref{eq:split} completes the proof.
\end{proof}

\subsection{Recovery of the X-ray transform}
Let us fix $z\in S$ and $\alpha \in \mathbb{S}^{n-1}$.
For $x\in \mathbb{R}^n$, we write
\begin{equation*}
    x = s\alpha + x_\perp,
\end{equation*}
where
\begin{equation*}
    s = x\cdot\alpha \in \mathbb{R},
    \qquad
    x_\perp = x-(x\cdot\alpha)\alpha.
\end{equation*}
Since $z\in S$ and $S$ is open, there exist
$\varepsilon_0>0$ and $\delta_0>0$ such that
\begin{equation}\label{def_tube}
    \mathcal{T}_{\mathrm{src}}
    = \{ z + s\alpha + x_\perp : |s| < \varepsilon_0,\ |x_\perp| < 2\delta_0 \}
    \subset S.
\end{equation}
Let $\chi_\parallel \in C_c^\infty(\mathbb{R})$ and $\chi_\perp \in C_c^\infty(\alpha^\perp)$ be non-negative functions such that $\chi_\parallel(0) = \chi_\perp(0) = 1$,  and
\begin{equation*}
    \operatorname{supp}(\chi_\parallel) \subset (-\varepsilon_0,\varepsilon_0),
    \qquad
    \operatorname{supp}(\chi_\perp) \subset \{ |y| < 2\delta_0 \}.
\end{equation*}
Define
\begin{equation}\label{def_chi}
    \chi(x)
    = \chi_\parallel(x\cdot\alpha)\,\chi_\perp(x_\perp),
    \qquad
    \chi_{z,\alpha}(x) = \chi(x - z),
\end{equation}
and set
\begin{equation}\label{def_source}
    f_{k,z}^\alpha(x) = e^{ik\alpha\cdot x}\, \chi_{z,\alpha}(x).
\end{equation}
Note that $f_{k,z}^\alpha \in C_c^\infty(S)$.
\begin{theorem}\label{thm:determination}
    Let $n \geq 2$ and $k_\rho$ be the threshold from Lemma \ref{lem_LAP} with $l=[n/2]+1$ and $\delta=3/4$. Assume that
    \begin{equation*}
        \Sigma \cap (S\cup W) = \varnothing.
    \end{equation*}
    Assume that $z_1, z_2\in S$ and $x^* \in W$ satisfy
    \begin{equation*}
        [z_2,x^*] \cap \Sigma = \varnothing.
    \end{equation*}
    Then the data $\{\Lambda_k^{\mathrm{src}}\}_{k>k_\rho}$ determines the integral
    \begin{equation*}
        \int_0^1 \rho\bigl(z_1 + s(x^* - z_1)\bigr)\,ds.
    \end{equation*}
\end{theorem}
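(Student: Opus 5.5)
The approach is to recover the complex value $\psi^{f}(x^*)$ for a suitable geometric-optics source $f$ through polarization against a reference source. Lemma~\ref{lem:Born_GO} then identifies the $k^{-1}$-term of this value with a line integral of $\rho$. Intensity from a single source cannot work: the $O(k^{-1})$ correction it carries is purely imaginary relative to the leading term, so it drops out of $|\psi^f|^2$ at order $k^{-1}$.

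\emph{Setup.} Assume $x^*\neq z_1,z_2$; if $x^*=z_1$ the integral is just $\rho(z_1)=0$, since $z_1\in S$. Set $\alpha_j=(x^*-z_j)/|x^*-z_j|$ and $f_j:=f^{\alpha_j}_{k,z_j}$ as in \eqref{def_source}. Take $\varepsilon_0,\delta_0$ so small that the tubes $\mathcal T_{\mathrm{src}}$ around $z_j$ lie in $S$, avoid $\Sigma$, and do not contain $x^*$.

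\emph{Free fields.} Write $\psi_0^{(j)}=R_k^0 f_j$. These are independent of $\rho$ and hence known. Since $x^*$ lies on the axis of the $j$-th tube beyond it, \eqref{eq:b0_def} gives
\[
b_0^{(j)}(x^*)=i c, \qquad c:=\int_{\mathbb R}\chi_\parallel\,d\tau>0.
\]
Proposition~\ref{lem:GO_ptwise} then gives $\psi_0^{(j)}(x^*)=e^{ik\alpha_j\cdot x^*}(ic+O(k^{-1}))$, which is non-zero for large $k$.

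\emph{Asymptotics at $x^*$.} Apply Lemma~\ref{lem:Born_GO} to each source. For $j=2$, the function $T_{\alpha_2}(\rho b_0^{(2)})(x^*)$ integrates $\rho b_0^{(2)}$ along the line through $x^*$ in direction $\alpha_2$, up to $x^*$. Since $b_0^{(2)}$ vanishes before the source tube and $[z_2,x^*]\cap\Sigma=\varnothing$, this integral is $0$, so
\[
\psi^{f_2}(x^*)=\psi_0^{(2)}(x^*)+O(k^{-2}).
\]
For $j=1$, the only contribution comes from the segment beyond the tube, where $b_0^{(1)}=ic$. Because $\rho=0$ near $z_1$, this gives
\[
T_{\alpha_1}(\rho b_0^{(1)})(x^*)=i\cdot ic\,|x^*-z_1|\int_0^1\rho\bigl(z_1+s(x^*-z_1)\bigr)ds .
\]

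\emph{Polarization.} The sources $f_1+af_2$ with $a\in\{1,-1,i,-i\}$ belong to $C_c^\infty(S)$, and $\psi^{f_1+af_2}=\psi^{f_1}+a\psi^{f_2}$. The data $\Lambda^{\mathrm{src}}_k(f_1+af_2)(x^*)$ therefore determine
\[
\psi^{f_1}(x^*)\,\overline{\psi^{f_2}(x^*)}
=\tfrac14\sum_{a}\bar a\,\bigl|\psi^{f_1+af_2}(x^*)\bigr|^2 .
\]
Divide this product by the known quantity $\overline{\psi_0^{(2)}(x^*)}$, which has modulus bounded below. The known product already determines $\psi^{f_1}(x^*)$ exactly; only the substitution of $\psi_0^{(2)}(x^*)$ for $\psi^{f_2}(x^*)$ costs a relative error $O(k^{-2})$. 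This yields $\psi^{f_1}(x^*)$ up to $O(k^{-2})$ from the data. Hence the limit
\[
\lim_{k\to\infty} k\,e^{-ik\alpha_1\cdot x^*}\bigl(\psi^{f_1}(x^*)-\psi_0^{(1)}(x^*)\bigr)
=g^2c\,|x^*-z_1|\int_0^1\rho\bigl(z_1+s(x^*-z_1)\bigr)ds
\]
is determined by the data, and since $g,c,|x^*-z_1|\neq0$ so is the desired integral.

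\emph{Main obstacle.} The delicate point is the bookkeeping of orders. The reference field must be known to accuracy $o(k^{-1})$, which is exactly why the visibility $[z_2,x^*]\cap\Sigma=\varnothing$ is needed, and the $O(k^{-1})$ error in $\psi_0^{(j)}$ is harmless only because $\psi_0^{(j)}$ is subtracted exactly rather than approximated. One must also check that the $O_{\rho,\chi,x}(k^{-2})$ constants in Lemma~\ref{lem:Born_GO} are fixed once $z_j$, $\alpha_j$ and $x^*$ are fixed.
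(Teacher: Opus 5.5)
Your proposal is correct and is essentially the paper's proof: polarization at $x^*$, Lemma~\ref{lem:Born_GO} for both sources, vanishing of the reference correction $T_{\alpha_2}(\rho\, b_0^{(2)})(x^*)$ by visibility, and $T_{\alpha_2}\chi_2(x^*)=ic\neq0$ via Proposition~\ref{lem:GO_ptwise}. The only cosmetic difference is the order of operations: you divide by the known $\overline{\psi_0^{(2)}(x^*)}$ and then subtract $\psi_0^{(1)}(x^*)$, whereas the paper subtracts the known product $\widetilde R_k\chi_1(x^*)\overline{\widetilde R_k\chi_2(x^*)}$ and divides by the limit $-iC_{\chi_1}C_{\chi_2}$. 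There are two small slips. First, with weights $\bar a$ your polarization sum equals $\overline{\psi^{f_1}(x^*)}\,\psi^{f_2}(x^*)$ rather than the product you wrote; use weights $a$ as in \eqref{eq:polarization}, although the conjugate would serve equally well. Second, the case $x^*=z_2$ is not covered by your remark about $z_1$ and needs a word, for instance replacing $z_2$ by a nearby $z_2'\in S\setminus\{x^*\}$ with $[z_2',x^*]\cap\Sigma=\varnothing$, as the paper does.
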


\begin{proof}
If $x^*=z_1$, the integral in question equals $\rho(z_1)=0$, since $z_1\in S$ and $S\cap\Sigma=\varnothing$, and there is nothing to prove. If $x^*=z_2$, then, since $S$ is open, $x^*\notin\Sigma$, and $\Sigma$ is closed, we may
replace $z_2$ by a nearby point $z_2'\in S\setminus\{x^*\}$ satisfying $[z_2',x^*]\cap\Sigma=\varnothing$. Hence we may assume $x^*\neq z_1,z_2$.

Let $\alpha_j = (x^*-z_j)/|x^*-z_j|$ and $t_j^* = |x^*-z_j|$ for $j=1,2$,
so that $z_j + t_j^*\alpha_j = x^*$. Let $\chi_1$, $\chi_2\in C_c^\infty(S)$ be cutoff functions as in \eqref{def_chi}, corresponding to the directions $\alpha_1,\alpha_2$ and points $z_1,z_2$, with parameters $\varepsilon_{0,1},\varepsilon_{0,2}>0$ and corresponding tubes $\mathcal T_{\mathrm{src},1},\mathcal T_{\mathrm{src},2}\subset S$ as in \eqref{def_tube}, respectively. We choose $\varepsilon_{0,j}<t_j^*$, $j=1,2$. Define
\begin{equation*}
    f_j = f_{k,z_j}^{\alpha_j} = e^{ik\alpha_j\cdot x}\chi_j(x),
    \qquad j=1,2,
\end{equation*}
and let $\psi^{f_j}$ denote the corresponding outgoing solutions. In the argument below we take $k>k_*$, where $k_*\geq k_\rho$ is chosen
large enough so that Lemma~\ref{lem:Born_GO} applies to both sources
$f_1$ and $f_2$.

The polarization identity gives
\begin{equation}\label{eq:polarization}
    \psi^{f_1}(w)\,\overline{\psi^{f_2}(w)}
    = \frac{1}{4}\sum_{j=0}^{3}i^{j}\,
    \Lambda_k^{\mathrm{src}}(f_1+i^{j}f_2)(w),
    \qquad w\in W.
\end{equation}
Therefore the data determines
$\psi^{f_1}(w)\,\overline{\psi^{f_2}(w)}$ for $w\in W$.

We now evaluate at $w=x^*$.
By Lemma~\ref{lem:Born_GO}, since $\chi_j$ is supported in $S$
and $x^*$ lies beyond the support of each $\chi_j$ in the $\alpha_j$-direction:
\begin{equation*}
    b_{0,j}(x^*) = T_{\alpha_j}\chi_j(x^*) = i\int_{-\infty}^{\infty}\chi_j(\tau\alpha_j+(x^*)_{\perp,j})\,d\tau
    = iC_{\chi_j},
    \qquad C_{\chi_j} = \|\chi_{\parallel,j}\|_{L^1},
\end{equation*}
where $(x^*)_{\perp,j} = x^* - (x^*\cdot\alpha_j)\alpha_j$. Since $x^*-z_1=t_1^*\alpha_1$, along the ray
$z_1+\tau\alpha_1$ we have
\[
b_{0,1}(z_1+\tau\alpha_1)
=
i\int_{-\infty}^{\tau}\chi_{\parallel,1}(s)\,ds.
\]
Hence
\[
b_{0,1}(z_1+\tau\alpha_1)=0,
\qquad \tau\leq -\varepsilon_{0,1},
\]
whereas, for $-\varepsilon_{0,1}<\tau\leq 0$, the point
$z_1+\tau\alpha_1$ belongs to
$\mathcal T_{\mathrm{src},1}\subset S$. Since
$S\cap\Sigma=\emptyset$, we have
\[
\rho(z_1+\tau\alpha_1)=0,
\qquad -\varepsilon_{0,1}<\tau\leq 0.
\]
Therefore,
\[
\rho(z_1+\tau\alpha_1)
b_{0,1}(z_1+\tau\alpha_1)=0,
\qquad \tau\leq 0.
\]

Moreover, since $\varepsilon_{0,1}<t_1^*$,
\[
b_{0,1}(z_1+\tau\alpha_1)
=
iC_{\chi_1},
\qquad
\varepsilon_{0,1}\leq \tau\leq t_1^*,
\]
and for $0\leq\tau<\varepsilon_{0,1}$ we have
$\rho(z_1+\tau\alpha_1)=0$. Consequently,
\[
\rho(z_1+\tau\alpha_1)
b_{0,1}(z_1+\tau\alpha_1)
=
iC_{\chi_1}\rho(z_1+\tau\alpha_1),
\qquad 0\leq\tau\leq t_1^*.
\]
Hence, by the definition of $T_{\alpha_1}$
and $V_k=\frac{g^2}{k-\Omega}\rho$,
\begin{multline*}
    T_{\alpha_1}(V_k b_{0,1})(x^*)
    = i\int_{-\infty}^{t_1^*}
       V_k(z_1+\tau\alpha_1)\,b_{0,1}(z_1+\tau\alpha_1)\,d\tau\\
       = i\,(iC_{\chi_1})\,\frac{g^2}{k-\Omega}
       \int_0^{t_1^*}\rho(z_1+\tau\alpha_1)\,d\tau
    = -C_{\chi_1}\frac{g^2}{k-\Omega}
       \int_0^{t_1^*}\rho(z_1+\tau\alpha_1)\,d\tau.
\end{multline*}

Writing $x^*=z_2+t_2^*\alpha_2$, we have
\begin{equation*}
    T_{\alpha_2}(V_k b_{0,2})(x^*)
    =
    i\int_{-\infty}^{t_2^*}
    V_k(z_2+s\alpha_2)b_{0,2}(z_2+s\alpha_2)\,ds.
\end{equation*}
The integrand vanishes on the whole interval $(-\infty,t_2^*]$: for
$s\leq-\varepsilon_0$ this follows from the support of
$\chi_{\parallel,2}$; for $|s|<\varepsilon_0$ the point
$z_2+s\alpha_2$ lies in $\mathcal{T}_{\mathrm{src},2}\subset S$, where $\rho=0$;
and for $s\in[\varepsilon_0,t_2^*]$ the point lies on $[z_2,x^*]$,
which is disjoint from $\Sigma$ by hypothesis. Hence,
\begin{equation*}
    T_{\alpha_2}(V_k b_{0,2})(x^*)=0.
\end{equation*}
Applying \eqref{eq:Born_GO}:
\begin{align*}
    \psi^{f_1}(x^*)
    &= e^{ik\alpha_1\cdot x^*}\left(
    \widetilde{R}_k \chi_1(x^*)
    + C_{\chi_1}\frac{g^2}{k-\Omega}\int_0^{t_1^*}\rho(z_1+\tau\alpha_1)\,d\tau
    + O(k^{-2})\right),\\
    \psi^{f_2}(x^*)
    &= e^{ik\alpha_2\cdot x^*}\bigl(
    \widetilde{R}_k \chi_2(x^*) + O(k^{-2})\bigr).
\end{align*}
Therefore,
\begin{multline}\label{eq:cross_term}
    \psi^{f_1}(x^*) \overline{\psi^{f_2}(x^*)}
    = e^{ik(\alpha_1-\alpha_2)\cdot x^*}
    \Biggl(
    \widetilde{R}_k \chi_1(x^*) \overline{\widetilde{R}_k \chi_2(x^*)}\\
    + C_{\chi_1} \overline{\widetilde{R}_k \chi_2(x^*)}\frac{g^2}{k-\Omega}
    \int_0^{t_1^*}\rho(z_1+\tau\alpha_1)\,d\tau
    + O(k^{-2})
    \Biggr).
\end{multline}
The left-hand side is known from the data via \eqref{eq:polarization}.
The factor $e^{ik(\alpha_1-\alpha_2)\cdot x^*}$ has modulus one and depends only
on $k$, $\alpha_1$, $\alpha_2$, and $x^*$, hence is known. The first term
$\widetilde{R}_k\chi_1(x^*)\overline{\widetilde{R}_k\chi_2(x^*)}$ is independent of
$\rho$ and known. Therefore, by \eqref{eq:cross_term}, the data determine
\begin{multline*}
    \frac{k-\Omega}{g^2}
    \left(
    e^{-ik(\alpha_1-\alpha_2)\cdot x^*}\,
    \psi^{f_1}(x^*)\overline{\psi^{f_2}(x^*)}
    - \widetilde{R}_k\chi_1(x^*)\overline{\widetilde{R}_k\chi_2(x^*)}
    \right)\\
    = C_{\chi_1}\overline{\widetilde{R}_k\chi_2(x^*)}
    \int_0^{t_1^*}\rho(z_1+\tau\alpha_1)\,d\tau + O(k^{-1}).
\end{multline*}
By Proposition~\ref{lem:GO_ptwise},
$\widetilde{R}_k\chi_2(x^*)\to T_{\alpha_2}\chi_2(x^*)=b_{0,2}(x^*)=iC_{\chi_2}$ as
$k\to\infty$, so
\begin{equation*}
    C_{\chi_1}\overline{\widetilde{R}_k\chi_2(x^*)}
    \to -iC_{\chi_1}C_{\chi_2}\neq 0.
\end{equation*}
Letting $k\to\infty$ therefore gives
\begin{multline*}
    \int_0^{t_1^*}\rho(z_1+\tau\alpha_1)\,d\tau
    \\
    = \frac{1}{-iC_{\chi_1}C_{\chi_2}}
      \lim_{k\to\infty}
      \frac{k-\Omega}{g^2}
    \left(
    e^{-ik(\alpha_1-\alpha_2)\cdot x^*}\,
    \psi^{f_1}(x^*)\overline{\psi^{f_2}(x^*)}
    - \widetilde{R}_k\chi_1(x^*)\overline{\widetilde{R}_k\chi_2(x^*)}
    \right).
\end{multline*}
Since the right-hand side is computed entirely from the data, the integral is
determined.

Finally, since
\begin{equation*}
    x^*-z_1=t_1^*\alpha_1,
\end{equation*}
the change of variables $\tau=s t_1^*$ gives
\begin{equation*}
    \int_0^{t_1^*}\rho(z_1+\tau\alpha_1)d\tau
    =
    t_1^*
    \int_0^1
    \rho(z_1+s(x^*-z_1))ds.
\end{equation*}
The number $t_1^*=|x^*-z_1|$ is known. Hence the data determine
\begin{equation*}
    \int_0^1
    \rho(z_1+s(x^*-z_1))ds,
\end{equation*}
as claimed.
\end{proof}

\subsection{Determination from the source-to-solution map}
We now prove Theorem~\ref{thm:source_recovery}. 
Theorem~\ref{thm:determination} shows that the source-to-solution data $\{\Lambda_k^{\mathrm{src}}\}_{k>k_\rho}$ determine the integral of $\rho$ along a segment $[z_1,x^*]$ which
intersects $\Sigma$, provided that there is a reference source $z_2\in S$
such that the segment $[z_2,x^*]$ avoids $\Sigma$. 
Condition~\ref{cond:geometric} ensures that such line integrals are available for a sufficiently rich family of rays. We use this family of line integrals to determine $\rho$ by injectivity of the X-ray transform.

We will use the following elementary auxiliary lemma.
\begin{lemma}\label{stab}
Let $A\subset\mathbb{R}^n$ be compact, let $B\subset\mathbb{R}^n$ be open, and let $x\in\mathbb{R}^n$. Assume that $A\subset (x;B]$. Then there exists $\varepsilon>0$ such that $A\subset (q;B]$ for every $q\in B_\varepsilon(x)$.
\end{lemma}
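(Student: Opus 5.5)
Write $(x;B]$ as the set of points $x+t(p-x)$ with $t\in(0,1]$ and $p\in B$; it is the cone from $x$ through $B$, truncated at $B$ and with the apex removed. The plan is to use compactness of $A$ to reduce to finitely many open ``cone pieces'', each of which is stable under small perturbations of the apex. Fix $a\in A$. Since $A\subset(x;B]$, there are $p_a\in B$ and $t_a\in(0,1]$ with $a=x+t_a(p_a-x)$. Because $B$ is open, pick $r_a>0$ with $B_{2r_a}(p_a)\subset B$.

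\textbf{Local stability.} For $q$ near $x$ and $y$ near $a$, I would write $y=q+t(p-q)$ with $t=t_a$ and solve for $p$:
\[
p=q+\frac{y-q}{t_a}.
\]
Then
\[
p-p_a=(q-x)\Bigl(1-\frac1{t_a}\Bigr)+\frac{y-a}{t_a},
\]
so $|p-p_a|\le |q-x|\,(1/t_a-1)+|y-a|/t_a$. Choosing $\eta_a>0$ small, depending on $t_a$ and $r_a$, ensures $|p-p_a|<2r_a$, hence $p\in B$, whenever $|q-x|<\eta_a$ and $|y-a|<\eta_a$. It remains to check that $y\neq q$. Since $a\neq x$, we have $|a-x|>0$, and shrinking $\eta_a$ below $|a-x|/3$ gives $|y-q|\ge |a-x|-2\eta_a>0$. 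With these choices, $y\in(q;B]$ for all $y\in B_{\eta_a}(a)$ and $q\in B_{\eta_a}(x)$.

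\textbf{Compactness.} The balls $B_{\eta_a}(a)$, $a\in A$, cover $A$. Extract a finite subcover indexed by $a_1,\dots,a_N$ and set $\varepsilon=\min_i\eta_{a_i}>0$. Every $y\in A$ lies in some $B_{\eta_{a_i}}(a_i)$. Hence for every $q\in B_\varepsilon(x)$ we get $y\in(q;B]$, that is, $A\subset(q;B]$. If $A$ is empty the claim is trivial.

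The only delicate point is the choice of parameter in the local step. One must keep $t=t_a$ fixed, or at least bounded away from $0$, and not fix $p$, because solving for $p$ is what stays controlled. Note that $t_a>0$ automatically, since $a\neq x$.
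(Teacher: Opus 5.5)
Your proof is correct and follows essentially the paper's argument. Fixing the parameter $t_a$ (the paper's $s_a=1/t_a$) turns membership in $(q;B]$ into the condition that the continuous map $(q,y)\mapsto(1-s_a)q+s_a y$ lands in the open set $B$, and compactness of $A$ then gives a uniform neighborhood of $x$. Your explicit check that $y\neq q$ (using $a\neq x$) covers a point the paper's stated ``if and only if'' passes over silently: that equivalence fails when $a=q\in B$. Your version is, if anything, slightly more complete.
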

\begin{proof}
For $q,a\in\mathbb{R}^n$, $a\in(q;B]$ if and only if $(1-s)q+sa\in B$ for some $s\geq 1$. Indeed, this follows by setting $s=1/t$ in
$a=(1-t)q+tb$.

For each $a\in A$, choose $s_a\geq 1$ such that 
\[
(1-s_a)x+s_a a\in B,
\]
and define
\[
F_a:\mathbb{R}^n\times\mathbb{R}^n\to\mathbb{R}^n,
\qquad
F_a(q,a')=(1-s_a)q+s_a a'.
\]
The map $F_a$ is continuous, and $F_a(x,a)\in B$. Since $B$ is open,
there exist neighborhoods $V_a$ of $x$ and $U_a$ of $a$ such that $F_a(V_a\times U_a)\subset B$. Thus, for every $q\in V_a$ and $a'\in U_a$, we have $a'\in(q;B]$.

Since $A$ is compact, finitely many of the sets $U_a$ cover $A$.
Let $U_{a_1},\ldots,U_{a_N}$ be such a finite subcover and set $V=\bigcap_{j=1}^N V_{a_j}$. Then $A\subset(q;B]$ for every $q\in V$. Choosing $\varepsilon>0$
such that $B_\varepsilon(x)\subset V$ proves the claim.
\end{proof}

Now we are ready to prove Theorem~\ref{thm:source_recovery}.

\begin{proof}[Proof of Theorem~\ref{thm:source_recovery}]
Since $\Sigma$ is compact and $\Sigma\subset(x;W']$, Lemma~\ref{stab} implies that there exists $\varepsilon>0$ such that
\begin{equation}\label{eq_l:cond}
\Sigma\subset(q;W']
\qquad\text{for all }q\in B_\varepsilon(x).
\end{equation}

Let us fix $\varepsilon>0$ such that $B_\varepsilon(x) \subset S$ and \eqref{eq_l:cond} holds. Fix $z\in B_\varepsilon(x)$. By Condition~\ref{cond:geometric}, $[y,p]\cap\Sigma=\varnothing$ for any $p\in W'$. Then Theorem~\ref{thm:determination}, applied with $z_1=z$, $z_2=y$, and $x^*=p\in W'$, shows that the data determine
\begin{equation*}
    \int_0^1 \rho(z+s(p-z)) ds.
\end{equation*}

Finally, since this holds for every $z \in B_\varepsilon(x)$, and $B_\varepsilon(x)$ is an open, non-empty set, and since $\Sigma \subset (z; W']$ as we established above, we conclude from Theorem 6.1 from \cite{JatharKarKrishnanPattar} that the given data determine the density $\rho$.
\end{proof}

\section{The far-field inverse problems}\label{sec:farfield}

\subsection{Determination from renormalized far-field intensity data}
We now prove that the renormalized far-field intensity data determine the
density. The proof proceeds in two steps. First, we show that the oscillatory
asymptotics of the renormalized intensity determine the scattering amplitude
away from the forward direction. Second, we use the high-frequency behaviour of
the scattering amplitude to recover the Fourier transform of $\rho$.

\begin{proposition}\label{prop:justification_Ak}
Let $k_\rho$ be the threshold from Lemma \ref{lem_LAP} with $l=[n/2]+1$ and $\delta=3/4$. Let $k>k_\rho$ and let $\alpha,\theta\in\mathbb S^{n-1}$ satisfy
$\theta\neq\alpha$. Then $\mathcal I_\rho(\alpha,k,\theta,\cdot)$ determines the scattering amplitude $A_k(\theta,\alpha)$.
More precisely,
\begin{equation}\label{eq:Ak_recovery_from_Irho}
    A_k(\theta,\alpha)
    =
    \lim_{R\to\infty}
    \frac{1}{R}
    \int_R^{2R}
    \mathcal I_\rho(\alpha,k,\theta,r)
    e^{-ikr(1-\alpha\cdot\theta)}\,dr .
\end{equation}
\end{proposition}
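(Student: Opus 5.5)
The plan is to start from the asymptotics \eqref{eq:Irho_asymptotics}, which follow from the far-field expansion of Lemma~\ref{lem:farfield_expansion}. Set $\omega:=k(1-\alpha\cdot\theta)$. Since $\theta\neq\alpha$ and both lie on the unit sphere, we have $\alpha\cdot\theta<1$, hence $\omega>0$. Writing $2\Re z=z+\bar z$, the asymptotics read
\begin{equation*}
    \mathcal I_\rho(\alpha,k,\theta,r)
    =
    e^{i\omega r}A_k(\theta,\alpha)
    +
    e^{-i\omega r}\overline{A_k(\theta,\alpha)}
    +
    \epsilon(r),
    \qquad \epsilon(r)\to0 \text{ as } r\to\infty .
\end{equation*}
Multiplying by $e^{-i\omega r}$ and averaging over $[R,2R]$ should isolate $A_k(\theta,\alpha)$.

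First, I would check that $r\mapsto\mathcal I_\rho(\alpha,k,\theta,r)$ is locally integrable, so that the integrals make sense. It is in fact continuous for large $r$. Indeed, $\psi^{\mathrm{sc}}_{k,\alpha}\in H^l_{-\delta}$ with $l=[n/2]+1>n/2$, so it is continuous by Sobolev embedding. Alternatively, $\psi^{\mathrm{sc}}_{k,\alpha}$ is smooth, which follows from Corollary~\ref{cor:r_smooth} applied to the Neumann series with smooth compactly supported terms.

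Next I would compute the three contributions to $\frac1R\int_R^{2R}\mathcal I_\rho\,e^{-i\omega r}\,dr$.
\begin{itemize}
\item The first term gives $\frac1R\int_R^{2R}A_k\,dr=A_k(\theta,\alpha)$ exactly.
\item The second term gives
\begin{equation*}
    \overline{A_k}\,\frac1R\int_R^{2R}e^{-2i\omega r}\,dr ,
\end{equation*}
and since $\omega\neq0$ this integral is bounded in absolute value by $\frac{1}{R\,\omega}$, which tends to $0$.
\item The remainder satisfies $\bigl|\frac1R\int_R^{2R}\epsilon(r)e^{-i\omega r}\,dr\bigr|\le\sup_{r\ge R}|\epsilon(r)|$, which tends to $0$.
\end{itemize}
Together these give \eqref{eq:Ak_recovery_from_Irho}.

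The determination statement then follows immediately. The right-hand side of \eqref{eq:Ak_recovery_from_Irho} depends only on $\mathcal I_\rho(\alpha,k,\theta,\cdot)$, and the quantities $k$, $\alpha$, $\theta$ are known. It also depends only on the $\sim_\infty$ class: altering $\mathcal I_\rho$ by a term that vanishes as $r\to\infty$ does not change the limit, by the same remainder estimate.

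The only real point of care is justifying \eqref{eq:Irho_asymptotics} itself from \eqref{eq:farfield_lemma}. Expanding $|e^{ik\alpha\cdot r\theta}+r^{-(n-1)/2}e^{ikr}A_k+o(r^{-(n-1)/2})|^2$ produces cross terms of order $o(r^{-(n-1)/2})$ and a square term of order $O(r^{-(n-1)})=o(r^{-(n-1)/2})$ because $n\ge2$. After multiplying by $r^{(n-1)/2}$, all of these become $o(1)$. This step is routine. The essential ingredient is simply $\theta\neq\alpha$, which is what makes the conjugate term oscillate and average out.
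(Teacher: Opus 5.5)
Your proposal is correct and is essentially the paper's proof: write $2\Re\bigl(e^{i\omega r}A_k\bigr)$ as $e^{i\omega r}A_k+e^{-i\omega r}\overline{A_k}$, multiply by $e^{-i\omega r}$ and average over $[R,2R]$. The conjugate term $\overline{A_k}\,e^{-2i\omega r}$ then averages out because $\omega=k(1-\alpha\cdot\theta)\neq0$, and the $o(1)$ remainder is bounded by its supremum. Your continuity check via Sobolev embedding ($l>n/2$) is enough on its own. The alternative smoothness argument via the Neumann series is not needed, and as stated it is incomplete: it would need convergence in $H^{l}_{-\delta}$ for every $l$, whereas Lemma~\ref{lem_LAP} gives that only above a threshold that depends on $l$.
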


\begin{proof}
Fix $k>k_\rho$ and $\alpha,\theta\in\mathbb S^{n-1}$ with
$\theta\neq\alpha$. Set
\begin{equation}
    \phi := k(1-\alpha\cdot\theta).
\end{equation}
Since $\theta\neq\alpha$, we have $\alpha\cdot\theta<1$, and therefore
$\phi\neq 0$.

By \eqref{eq:Irho_asymptotics}, there exists a function
$\varepsilon(r)=o(1)$ as $r\to\infty$ such that
\begin{equation}\label{eq:Irho_epsilon_expansion}
    \mathcal I_\rho(\alpha,k,\theta,r)
    =
    2\Re\bigl(e^{i\phi r}A_k(\theta,\alpha)\bigr)
    +
    \varepsilon(r),
    \qquad r\to\infty.
\end{equation}
Moreover,
\begin{equation}
    \Re\bigl(e^{i\phi r}A_k(\theta,\alpha)\bigr)
    =
    \frac{1}{2}  e^{i\phi r}A_k(\theta,\alpha)
    +
    \frac{1}{2} e^{-i\phi r}\overline{A_k(\theta,\alpha)} .
\end{equation}
Multiplying by $2e^{-i\phi r}$, we get
\begin{equation}
    2\Re\bigl(e^{i\phi r}A_k(\theta,\alpha)\bigr)e^{-i\phi r}
    =
    A_k(\theta,\alpha)
    +
    \overline{A_k(\theta,\alpha)} e^{-2i\phi r}.
\end{equation}

Using \eqref{eq:Irho_epsilon_expansion}, we obtain
\begin{multline*}
    \frac{1}{R}\int_R^{2R}
    \mathcal I_\rho(\alpha,k,\theta,r)e^{-i\phi r}\,dr
    = A_k(\theta,\alpha)\\
    + 
    \overline{A_k(\theta,\alpha)}
    \frac{1}{R}\int_R^{2R}e^{-2i\phi r}\,dr
    +
    \frac{1}{R}\int_R^{2R} \varepsilon(r)e^{-i\phi r}\,dr .
\end{multline*}
We now verify that the last two terms tend to zero.

First, since $\phi\neq 0$,
\begin{equation}
    \frac{1}{R}\int_R^{2R} e^{-2i\phi r}\,dr
    =
    \frac{e^{-4i\phi R}-e^{-2i\phi R}}{-2i\phi R}
    \to 0,
    \qquad
    \text{as } R\to\infty .
\end{equation}
Second, since $\varepsilon(r)\to0$ as $r\to\infty$, we have
\begin{equation}
    \sup_{r\in[R,2R]}|\varepsilon(r)|\to0
    \qquad \text{as } R\to\infty.
\end{equation}
Consequently,
\begin{equation}
\left|
    \frac{1}{R}\int_R^{2R} \varepsilon(r)e^{-i\phi r}\,dr
    \right|
    \le
    \frac{1}{R}\int_R^{2R} |\varepsilon(r)|\,dr
    \le
    \sup_{r\in[R,2R]}|\varepsilon(r)|
    \to0 .
\end{equation}
Hence the averaged contribution of the $\varepsilon(r)$-term tends to zero as $R\to\infty$. Therefore
\begin{equation}
    \lim_{R\to\infty}
    \frac{1}{R}\int_R^{2R}
    \mathcal I_\rho(\alpha,k,\theta,r)e^{-i\phi r}\,dr
    =
    A_k(\theta,\alpha).
\end{equation}
Since $\phi=k(1-\alpha\cdot\theta)$, this proves
\eqref{eq:Ak_recovery_from_Irho}.
\end{proof}

The previous formula depends only on the asymptotic class of
$\mathcal I_\rho$ modulo terms tending to zero at infinity. We record this
consequence explicitly, since this is the form in which the result will be used for the inverse problem.

\begin{corollary}\label{cor:equivalence_class_determines_Ak}
Let $k_\rho$ be the threshold from Lemma \ref{lem_LAP} with $l=[n/2]+1$ and $\delta=3/4$. Let $k>k_\rho$ and let $\alpha,\theta\in\mathbb{S}^{n-1}$ satisfy $\theta\neq\alpha$.
If $\mathcal{I}_{\rho_1}$ and $\mathcal{I}_{\rho_2}$ are equivalent in the sense of \eqref{eq:equivalence_relation}, then
\begin{equation}\label{eq:Ak_equal_for_equivalent}
    A_k^{\rho_1}(\theta,\alpha) = A_k^{\rho_2}(\theta,\alpha),
\end{equation}
where $A_k^{\rho_j}(\theta,\alpha)$ denotes the scattering amplitude associated with $\rho_j$, $j=1,2$.

In particular, the equivalence class $[\mathcal I_\rho]_\infty$ determines
$A_k(\theta,\alpha)$ for every $k>k_\rho$ and every non-forward pair
$\theta\ne\alpha$.
\end{corollary}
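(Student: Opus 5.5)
The plan is to apply Proposition~\ref{prop:justification_Ak} to each density separately and subtract. Fix $k>\max\{k_{\rho_1},k_{\rho_2},K\}$, where $K$ is the constant in the definition of $\sim_\infty$, and fix $\alpha,\theta\in\mathbb S^{n-1}$ with $\theta\neq\alpha$. For $j=1,2$, formula \eqref{eq:Ak_recovery_from_Irho} gives
\begin{equation*}
A_k^{\rho_j}(\theta,\alpha)=\lim_{R\to\infty}\frac1R\int_R^{2R}\mathcal I_{\rho_j}(\alpha,k,\theta,r)\,e^{-ikr(1-\alpha\cdot\theta)}\,dr .
\end{equation*}
Both limits exist, so their difference is the limit of the average of $\bigl(\mathcal I_{\rho_1}-\mathcal I_{\rho_2}\bigr)e^{-i\phi r}$, with $\phi=k(1-\alpha\cdot\theta)$.

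Set $\eta(r):=\mathcal I_{\rho_1}(\alpha,k,\theta,r)-\mathcal I_{\rho_2}(\alpha,k,\theta,r)$. By \eqref{eq:equivalence_relation}, $\eta(r)\to0$ as $r\to\infty$. Exactly as in the last step of the proof of Proposition~\ref{prop:justification_Ak}, we get
\begin{equation*}
\Bigl|\frac1R\int_R^{2R}\eta(r)e^{-i\phi r}\,dr\Bigr|\le\sup_{r\in[R,2R]}|\eta(r)|\to0 .
\end{equation*}
This yields \eqref{eq:Ak_equal_for_equivalent}.

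For the final assertion, apply the same reasoning to an arbitrary representative of $[\mathcal I_\rho]_\infty$. Any $\mathcal I'\sim_\infty\mathcal I_\rho$ gives the same averaged limit, and that limit is $A_k(\theta,\alpha)$. So the class determines $A_k(\theta,\alpha)$ for all large $k$ and all non-forward pairs.

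The only subtle point is the threshold. The statement takes $k>k_\rho$, while the equivalence relation holds only for $k>K$. I will read the corollary as holding for $k$ above both thresholds, which suffices since the inverse problem uses only sufficiently large $k$. There is no real obstacle otherwise: the argument is a one-line reuse of the averaging estimate.
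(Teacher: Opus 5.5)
Your proposal is correct and is essentially the paper's proof: apply Proposition~\ref{prop:justification_Ak} to each $\rho_j$, subtract, and bound the average of $\bigl(\mathcal I_{\rho_1}-\mathcal I_{\rho_2}\bigr)e^{-i\phi r}$ over $[R,2R]$ by $\sup_{r\in[R,2R]}|\mathcal I_{\rho_1}-\mathcal I_{\rho_2}|\to0$. Your restriction to $k>\max\{k_{\rho_1},k_{\rho_2},K\}$ is a fair clarification of a threshold that the paper's proof leaves implicit when it invokes \eqref{eq:equivalence_relation}.
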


\begin{proof}
Set $\phi := k(1-\alpha\cdot\theta)$. Since $\theta\neq\alpha$, we have $\phi\neq 0$.
Define
\begin{equation*}
    \delta(r) := \mathcal{I}_{\rho_1}(\alpha,k,\theta,r) - \mathcal{I}_{\rho_2}(\alpha,k,\theta,r).
\end{equation*}
By the equivalence relation \eqref{eq:equivalence_relation}, $\delta(r) \to 0$ as $r\to\infty$.

Applying Proposition~\ref{prop:justification_Ak} to $\rho_1$ and to $\rho_2$, we obtain
\begin{equation*}
    A_k^{\rho_j}(\theta,\alpha)
    =
    \lim_{R\to\infty}
    \frac{1}{R}\int_R^{2R}
    \mathcal{I}_{\rho_j}(\alpha,k,\theta,r)\,e^{-i\phi r}\,dr,
    \qquad j=1,2.
\end{equation*}
Subtracting these two limits,
\begin{equation*}
    A_k^{\rho_1}(\theta,\alpha) - A_k^{\rho_2}(\theta,\alpha)
    =
    \lim_{R\to\infty}
    \frac{1}{R}\int_R^{2R}
    \delta(r)\,e^{-i\phi r}\,dr.
\end{equation*}
We estimate
\begin{equation*}
    \left|
        \frac{1}{R}\int_R^{2R}\delta(r)\,e^{-i\phi r}\,dr
    \right|
    \le
    \frac{1}{R}\int_R^{2R}|\delta(r)|\,dr
    \le
    \sup_{r\in[R,2R]}|\delta(r)|.
\end{equation*}
Since $\delta(r)\to 0$ as $r\to\infty$, we have $\sup_{r\in[R,2R]}|\delta(r)|\to 0$ as $R\to\infty$. Hence
\begin{equation*}
    A_k^{\rho_1}(\theta,\alpha) - A_k^{\rho_2}(\theta,\alpha) = 0,
\end{equation*}
which proves \eqref{eq:Ak_equal_for_equivalent}.
\end{proof}

It remains to show that knowledge of these scattering amplitudes is enough to determine the density. 

\begin{lemma}\label{lem:amplitude_determines_rho}
There exists $k_\rho>0$ such that the scattering amplitudes
\begin{equation*}
    \{A_k(\theta,\alpha) : \theta,\alpha\in\mathbb{S}^{n-1},\, \theta\neq \alpha, \, k > k_\rho\}
\end{equation*}
uniquely determine the density $\rho$.
\end{lemma}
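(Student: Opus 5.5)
We argue via a high-frequency Born approximation of the amplitude formula \eqref{eq:amplitude}. Write $\psi_{k,\alpha}=e^{ik\alpha\cdot x}+\psi^{\mathrm{sc}}_{k,\alpha}$ and insert it into \eqref{eq:amplitude}:
\begin{equation*}
A_k(\theta,\alpha)
=
-2d_n k^{(n-1)/2}\frac{g^2}{k-\Omega}
\Bigl(\widehat{\rho}\bigl(k(\theta-\alpha)\bigr)
+\int e^{-ik\theta\cdot y}\rho(y)\,\psi^{\mathrm{sc}}_{k,\alpha}(y)\,dy\Bigr).
\end{equation*}
Fix $\xi\in\mathbb{R}^n\setminus\{0\}$. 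For every $k>|\xi|/2$ choose $\alpha_k,\theta_k\in\mathbb{S}^{n-1}$ with $k(\theta_k-\alpha_k)=\xi$. Since $\xi\neq0$, this is possible with $\theta_k\neq\alpha_k$: take $\theta_k-\alpha_k=\xi/k$, which has length at most $2$, and place the pair symmetrically about $\xi$. Then the data determine the normalized quantity
\begin{equation*}
B_k(\xi):=-\frac{k-\Omega}{2d_n g^2 k^{(n-1)/2}}A_k(\theta_k,\alpha_k)
=\widehat\rho(\xi)+E_k,
\end{equation*}
where $E_k:=\int e^{-ik\theta_k\cdot y}\rho(y)\psi^{\mathrm{sc}}_{k,\alpha_k}(y)\,dy$. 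The constants $d_n,g,\Omega$ are known.

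The key step is to show that $E_k\to0$ as $k\to\infty$. We have $\psi^{\mathrm{sc}}_{k,\alpha}=-(P_k-i0)^{-1}(V_k e^{ik\alpha\cdot(\cdot)})$. Take $\delta=3/4$ and $l=0$, and let $\chi\in C_c^\infty$ with $\chi=1$ near $\Sigma$. Lemma~\ref{lem_LAP} gives
\begin{equation*}
\|\chi\psi^{\mathrm{sc}}_{k,\alpha}\|_{L^2}
\le C\|\psi^{\mathrm{sc}}_{k,\alpha}\|_{L^2_{-\delta}}
\le C_\rho\|V_k e^{ik\alpha\cdot x}\|_{L^2_\delta}
\le \frac{C_\rho}{k-\Omega}.
\end{equation*}
The constant $C_\rho$ here is uniform in $k$ and $\alpha$. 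By Cauchy--Schwarz, $|E_k|\le\|\rho\|_{L^2}\,\|\chi\psi^{\mathrm{sc}}_{k,\alpha_k}\|_{L^2}=O(k^{-1})$. Therefore
\begin{equation*}
\widehat\rho(\xi)=\lim_{k\to\infty}B_k(\xi),
\end{equation*}
and every $B_k(\xi)$ is computed from the amplitudes at non-forward pairs with $k>k_\rho$.

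This determines $\widehat\rho$ on $\mathbb{R}^n\setminus\{0\}$. By continuity of $\widehat\rho$ it is then known everywhere, and Fourier inversion recovers $\rho$. Here $k_\rho$ is taken as in Lemma~\ref{lem:farfield_expansion}, enlarged if necessary so that $k_\rho>2\Omega$.

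The main point needing care is the uniformity of the Born remainder in $\alpha$. The pair $(\theta_k,\alpha_k)$ moves with $k$, so a pointwise-in-$\alpha$ bound is not enough. This is handled because the constant in Lemma~\ref{lem_LAP} depends only on $\rho$, $l$ and $\delta$, and the source $V_ke^{ik\alpha\cdot x}$ has $L^2_\delta$ norm bounded independently of $\alpha$.

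A secondary check is that the amplitude formula \eqref{eq:amplitude} holds, with $\psi_{k,\alpha}$ locally in $L^2$ on $\Sigma$, for $l=[n/2]+1$. This is already given by Lemma~\ref{lem:farfield_expansion}, and the weaker $l=0$ estimate is used only to bound $E_k$. So no new high-frequency estimate, such as Proposition~\ref{lem:GO_ptwise}, is needed.
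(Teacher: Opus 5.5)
Your proof is correct, and it follows the paper's overall structure: the Born splitting of \eqref{eq:amplitude}, the $k$-dependent non-forward pair $(\theta_k,\alpha_k)$ with $k(\theta_k-\alpha_k)=\xi$, continuity at $\xi=0$, and Fourier inversion. The difference lies in how you bound the remainder $E_k$ uniformly in $\alpha$.

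The paper works at regularity $l=[n/2]+1$. There the phase $e^{ik\alpha\cdot x}$ would cost a factor $k^{l}$ in $H^l_\delta$, so the paper conjugates it away. It writes $e^{-ik\alpha\cdot(\cdot)}\psi^{\mathrm{sc}}_{k,\alpha}=-\widetilde G_kV_k$ via Lemma~\ref{lem:conj_resolvent}, and Sobolev embedding then gives $\|\psi^{\mathrm{sc}}_{k,\alpha}\|_{L^\infty(\Sigma)}\le C_\rho/k$, hence $|E_k|\le \|\rho\|_{L^1}C_\rho/k$.

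You instead observe that at $l=0$ the phase is an isometry of $L^2_\delta$. So Lemma~\ref{lem_LAP} alone gives an $\alpha$-uniform $O(k^{-1})$ bound in $L^2_{-\delta}$. Cauchy--Schwarz against $\rho$ then suffices, because only an integral of $\psi^{\mathrm{sc}}_{k,\alpha}$ over $\Sigma$ is needed. This route is more elementary: it needs neither the conjugated limiting absorption principle nor Sobolev embedding. The resulting bound is just as uniform in $(\theta,\alpha)$, so it would also serve in \eqref{eq:cross_asymptotics}. The paper's route gives the stronger pointwise control of the scattered field and reuses machinery it needs anyway for Lemma~\ref{lem:Born_GO}, but your $L^2$ bound is all this lemma requires.

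One bookkeeping point: $k_\rho$ must also exceed the $l=0$ threshold $k_{\rho,0,3/4}$ of Lemma~\ref{lem_LAP}, not only the threshold from Lemma~\ref{lem:farfield_expansion} and $2\Omega$. So take the maximum of the three. The two notions of outgoing solution agree, since convergence in $H^l_{-\delta}$ implies convergence in $L^2_{-\delta}$.
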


\begin{proof}
By \eqref{eq:amplitude} and $V_k = \frac{g^2}{k-\Omega}\rho$,
\begin{equation}\label{eq:Ak_expanded}
A_k(\theta,\alpha)
=
\frac{-2d_n g^2\, k^{(n-1)/2}}{k-\Omega}
\int_{\mathbb{R}^n}
e^{-ik\theta\cdot y}\,\rho(y)\,\psi_{k,\alpha}(y)\,dy.
\end{equation}
We substitute $\psi_{k,\alpha} = e^{ik\alpha\cdot y} + \psi^{\mathrm{sc}}_{k,\alpha}$
to obtain
\begin{equation}\label{eq:Ak_split}
A_k(\theta,\alpha)
=
\frac{-2d_n g^2\, k^{(n-1)/2}}{k-\Omega}
\bigl(\hat{\rho}(k(\theta - \alpha)) + E_k(\theta,\alpha)\bigr),
\end{equation}
where
\begin{equation*}
E_k(\theta,\alpha)
= \int_{\mathbb{R}^n} e^{-ik\theta\cdot y}\,\rho(y)\,
\psi^{\mathrm{sc}}_{k,\alpha}(y)\,dy.
\end{equation*}

We estimate the error term. Set $l=[n/2]+1$ and $\delta=3/4$, and let $k_\rho\geq\max\{1,2\Omega\}$ be sufficiently large the conclusions of Lemmas~\ref{lem_LAP} and \ref{lem:conj_resolvent} hold for $k>k_\rho$. By \eqref{eq:sc_equation} and \eqref{eq:conj_LAP_identity}, 
\begin{equation*}
e^{-ik\alpha\cdot (\cdot)}\,\psi^{\mathrm{sc}}_{k,\alpha} = -\widetilde{G}_k V_k.
\end{equation*} 
Hence, by Sobolev embedding ($l>n/2$) and
\eqref{eq:conj_LAP_est}, we estimate
\begin{multline*}
    \|\psi^{\mathrm{sc}}_{k,\alpha}\|_{L^\infty(\Sigma)}
    =
    \|e^{-ik\alpha\cdot(\cdot)}
    \psi^{\mathrm{sc}}_{k,\alpha}\|_{L^\infty(\Sigma)}
    \leq
    C\|e^{-ik\alpha\cdot(\cdot)}
    \psi^{\mathrm{sc}}_{k,\alpha}\|_{H^l_{-\delta}}\\
    =
    C\|\widetilde{G}_k V_k\|_{H^l_{-\delta}}
    \leq
    C_\rho \|V_k\|_{H^l_\delta}
    \leq
    \frac{C_\rho}{k}.
\end{multline*}

Since $\rho$ is supported in $\Sigma$, it follows that
\begin{equation}\label{eq:Ak_split_bound_reminder}
|E_k(\theta,\alpha)|
\leq
\|\rho\|_{L^1(\Sigma)}
\|\psi^{\mathrm{sc}}_{k,\alpha}\|_{L^\infty(\Sigma)}
\leq
\frac{C_{\rho}}{k}.
\end{equation}
We emphasize that the constant in the bound $|E_k(\theta,\alpha)|\le C_\rho/k$ is independent of $\theta$ and $\alpha$:
the factor $e^{-ik\theta\cdot y}$ has modulus one, and the constants in
Lemma~\ref{lem:conj_resolvent} do not depend on $\alpha\in\mathbb{S}^{n-1}$. In
particular, the bound remains valid at the $k$-dependent pair
$(\theta_k,\alpha_k)$ constructed below.

Let $\xi\in\mathbb R^n\setminus\{0\}$ be arbitrary. Since $n\geq 2$, we may choose a unit vector
\begin{equation*}
    \omega \in \mathbb{S}^{n-1},
    \qquad
    \omega \cdot \xi = 0.
\end{equation*}
For every sufficiently large $k$ with $k > |\xi|/2$, define
\begin{equation*}
    \theta_k
    :=
    \frac{\xi}{2k}
    +
    \sqrt{1-\frac{|\xi|^2}{4k^2}}\,\omega,
    \qquad
    \alpha_k
    :=
    -\frac{\xi}{2k}
    +
    \sqrt{1-\frac{|\xi|^2}{4k^2}}\,\omega.
\end{equation*}
Then $\theta_k,\alpha_k \in \mathbb{S}^{n-1}$ and
\begin{equation*}
    k(\theta_k-\alpha_k)=\xi.
\end{equation*}
Since $\xi\neq0$, we have $\theta_k-\alpha_k=\xi/k\neq0$, so the pair
$(\theta_k,\alpha_k)$ is non-forward and $A_k(\theta_k,\alpha_k)$ is determined
by the data for $k>\max\{k_\rho,|\xi|/2\}$. Substituting $(\theta_k,\alpha_k)$
into \eqref{eq:Ak_split} and using $E_k(\theta_k,\alpha_k)=O(k^{-1})$, which
holds by the $\theta,\alpha$-uniformity just established, we obtain
\begin{equation}\label{eq:Ak_high_freq_here}
    \frac{k-\Omega}{-2d_n g^2 k^{(n-1)/2}}\,A_k(\theta_k,\alpha_k)
    =
    \hat{\rho}(\xi)
    +
    O(k^{-1}).
\end{equation}
Therefore,
\begin{equation*}
    \hat{\rho}(\xi)
    =
    \lim_{k\to\infty}
    \frac{k-\Omega}{-2d_n g^2 k^{(n-1)/2}}\,A_k(\theta_k,\alpha_k).
\end{equation*}
Since $\xi \in \mathbb{R}^n\setminus\{0\}$ was arbitrary, the scattering amplitudes determine $\widehat{\rho}(\xi)$ for every
$\xi\ne0$. Since $\rho\in C_c^\infty(\mathbb R^n)$, the function
$\widehat{\rho}$ is continuous, and therefore $\widehat{\rho}(0)$ is also
determined by taking the limit $\xi\to 0$. Hence, the data determine $\rho$
uniquely by the Fourier inversion formula. This completes the proof.
\end{proof}

Combining the two steps gives the desired uniqueness result for the original renormalized intensity data.

\begin{proof}[Proof of Theorem~\ref{thm:intensity_determines_rho}]
By Corollary~\ref{cor:equivalence_class_determines_Ak}, the equivalence class
$[\mathcal I_\rho]_\infty$ determines the scattering amplitudes
$A_k(\theta,\alpha)$ for all $k>k_\rho$ and all
$\theta,\alpha\in\mathbb S^{n-1}$ with $\theta\ne\alpha$.
By Lemma~\ref{lem:amplitude_determines_rho}, these scattering amplitudes
determine $\rho$ uniquely. Hence the renormalized far-field intensity data
determine $\rho$ uniquely.
\end{proof}

\subsection{Determination from phaseless superposition data}
We now prove the uniqueness result for the phaseless far-field data introduced
in Section~\ref{sec:formulation}. The key point is that measurements with
the four relative phases $a\in\{1,-1,i,-i\}$ determine the mixed products
of the scattering amplitudes
\begin{equation*}
    A_k(\theta,\alpha_2)\overline{A_k(\theta,\alpha_1)}.
\end{equation*}
The high-frequency asymptotics of $A_k$ then allow us to recover
$\widehat\rho(\xi)$ for every $\xi\in\mathbb R^n$.

\begin{proof}[Proof of Theorem~\ref{thm:phaseless_superposed}]
By the linearity relation \eqref{eq:superposition_amplitude}, we have
\begin{equation*}
    \left|A_k^{(a)}(\theta,\alpha_1,\alpha_2)\right|^2
    =
    |A_k(\theta,\alpha_1)|^2
    +
    |a|^2 |A_k(\theta,\alpha_2)|^2
    +
    2\Re\Bigl(
        a\,A_k(\theta,\alpha_2)\overline{A_k(\theta,\alpha_1)}
    \Bigr).
\end{equation*}
Since \(a\in\{1,-1,i,-i\}\), we obtain
\begin{equation}\label{eq:recover_real_part}
    \mathcal M_k(\alpha_1,\alpha_2,1,\theta)
-
\mathcal M_k(\alpha_1,\alpha_2,-1,\theta)
=
4\Re\Bigl(
A_k(\theta,\alpha_2)\overline{A_k(\theta,\alpha_1)}
\Bigr),
\end{equation}
\begin{equation}\label{eq:recover_imag_part}
    \mathcal M_k(\alpha_1,\alpha_2,i,\theta)
-
\mathcal M_k(\alpha_1,\alpha_2,-i,\theta)
=
-4\Im\Bigl(
A_k(\theta,\alpha_2)\overline{A_k(\theta,\alpha_1)}
\Bigr).
\end{equation}

Hence the data determine the product
\begin{equation*}
    A_k(\theta,\alpha_2)\overline{A_k(\theta,\alpha_1)}.
\end{equation*}

Next we use the high-frequency asymptotics of the scattering amplitude \eqref{eq:Ak_split} and \eqref{eq:Ak_split_bound_reminder} to obtain
\begin{multline}\label{eq:cross_asymptotics}
    A_k(\theta,\alpha_2)\overline{A_k(\theta,\alpha_1)}
    \\
    =
    \left|\frac{-2d_n g^2 k^{(n-1)/2}}{k-\Omega}\right|^2
    \widehat{\rho}\bigl(k(\theta-\alpha_2)\bigr)
    \overline{\widehat{\rho}\bigl(k(\theta-\alpha_1)\bigr)}
    +
    O\left(k^{n-4}\right),
\end{multline}
uniformly in \(\theta,\alpha_1,\alpha_2\in\mathbb{S}^{n-1}\). Taking $\alpha_1=\alpha_2=\theta$ for any $\theta\in\mathbb{S}^{n-1}$ gives
\begin{equation*}
    |A_k(\theta,\theta)|^2
    =
    \left|\frac{-2d_n g^2 k^{(n-1)/2}}{k-\Omega}\right|^2
    |\widehat{\rho}(0)|^2
    + O(k^{n-4}),
\end{equation*}
which is determined by the data through
\eqref{eq:recover_real_part}--\eqref{eq:recover_imag_part}. Hence the data
determine $|\widehat{\rho}(0)|^2$. Since \(\rho\ge 0\) and \(\rho\not\equiv 0\), we have $\widehat{\rho}(0)>0$. Therefore, the data determine $\widehat{\rho}(0)$.

We now fix \(\xi\in\mathbb{R}^n\). Choose \(k\) sufficiently large so that
\(|\xi|\le 2k\). We choose \(\theta_k,\alpha_{1,k}\in\mathbb{S}^{n-1}\) such that
\begin{equation}\label{eq:geometry_choice}
    k(\theta_k-\alpha_{1,k})=\xi.
\end{equation}
For example, choose \(\theta_k\in\mathbb{S}^{n-1}\) satisfying
\begin{equation*}
    \theta_k\cdot\xi = \frac{|\xi|^2}{2k},
\end{equation*}
and then set
\begin{equation*}
    \alpha_{1,k} = \theta_k-\frac{\xi}{k}.
\end{equation*}
A direct computation shows that \(|\alpha_{1,k}|=1\), hence
\(\alpha_{1,k}\in\mathbb{S}^{n-1}\), and \eqref{eq:geometry_choice} holds.

Next we choose $\alpha_{2,k}=\theta_k$. Then
\begin{equation*}
    k(\theta_k-\alpha_{2,k})=0.
\end{equation*}
Substituting these choices into \eqref{eq:cross_asymptotics}, we obtain
\begin{equation*}
    A_k(\theta_k,\alpha_{2,k})\overline{A_k(\theta_k,\alpha_{1,k})}
    =
    \left|\frac{-2d_n g^2 k^{(n-1)/2}}{k-\Omega}\right|^2
    \widehat{\rho}(0)\,\overline{\widehat{\rho}(\xi)}
    +
    O\left(k^{n-4}\right).
\end{equation*}

Equivalently,
\begin{equation*}
    \overline{\widehat{\rho}(\xi)}
    =
    \frac{1}{\widehat{\rho}(0)}
    \left|\frac{k-\Omega}{-2d_n g^2 k^{(n-1)/2}}\right|^2
    A_k(\theta_k,\alpha_{2,k})\overline{A_k(\theta_k,\alpha_{1,k})}
    +
    O(k^{-1}).
\end{equation*}
Since the right-hand side is determined by the data,
it follows that the data determine \(\widehat{\rho}(\xi)\) for every
\(\xi\in\mathbb{R}^n\) in the limit \(k\to\infty\).

Since $\xi\in\mathbb{R}^n$ was arbitrary, the data determine
$\widehat{\rho}$ on all of $\mathbb{R}^n$, and therefore determine
$\rho$ uniquely by the Fourier inversion formula.
\end{proof}

\appendix
\section{Auxiliary results}\label{sec:appendix}
In this appendix we prove Proposition~\ref{lem:GO_ptwise}.
We first establish two auxiliary results.

\begin{lemma}\label{lem:transport}
Let $\alpha\in\mathbb{S}^{n-1}$ and $h\in \mathscr{S}(\mathbb{R}^n)$.
For $\varepsilon>0$, define
\begin{equation*}
    T_\alpha^\varepsilon h(x)
  :=
  \mathcal{F}^{-1}\left[\frac{\widehat{h}(\eta)}{\alpha\cdot\eta-i\varepsilon}\right](x)
  = \frac{1}{(2\pi)^n}\int_{\mathbb{R}^n}
    \frac{e^{ix\cdot\eta}\widehat{h}(\eta)}{\alpha\cdot\eta-i\varepsilon}d\eta,
\end{equation*}
\begin{equation*}
    T_\alpha h(x)
    :=
    i\int_{-\infty}^{x\cdot\alpha} h(\tau\alpha+x_\perp)d\tau,
  \qquad x_\perp = x-(x\cdot\alpha)\alpha.
\end{equation*}
Then, for any compact set K,
\begin{equation*}
    \lim_{\varepsilon\to 0} \left\|T_{\alpha} h
    -
    T_{\alpha}^\varepsilon h 
    \right\|_{C^0(K)} =0.
\end{equation*}
\end{lemma}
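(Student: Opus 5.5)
Our plan is to reduce to a one-dimensional computation along the direction $\alpha$. Choose orthonormal coordinates with $x = s\alpha + x_\perp$ and $\eta = \sigma\alpha + \eta_\perp$. Taking the inverse Fourier transform in $\eta_\perp$ first, $T_\alpha^\varepsilon h$ becomes a one-dimensional multiplier acting in $s$ on the partial Fourier transform. Concretely, let $\widetilde h(\sigma, x_\perp)$ be the Fourier transform of $h$ in the $\alpha$-variable only. Then
\begin{equation*}
T^\varepsilon_\alpha h(s\alpha+x_\perp) = \frac{1}{2\pi}\int_{\mathbb R}\frac{e^{is\sigma}\,\widetilde h(\sigma,x_\perp)}{\sigma - i\varepsilon}\,d\sigma .
\end{equation*}

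Next we identify this as a convolution in $s$. For $\varepsilon>0$ the function $g_\varepsilon(s) = i\,e^{\varepsilon s}\mathbf 1_{\{s<0\}}$ has Fourier transform
\begin{equation*}
\int_{-\infty}^0 i\,e^{(\varepsilon - i\sigma)s}\,ds = \frac{i}{\varepsilon - i\sigma} = \frac{1}{\sigma - i\varepsilon}.
\end{equation*}
Hence, with $u = s - \tau$,
\begin{equation*}
T_\alpha^\varepsilon h(s\alpha + x_\perp) = i\int_{-\infty}^{s} e^{-\varepsilon(s-\tau)}\, h(\tau\alpha + x_\perp)\,d\tau .
\end{equation*}
This step is justified because $h(\cdot\,\alpha+x_\perp)\in\mathscr S(\mathbb R)$ and $g_\varepsilon\in L^1$. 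The product $\widehat{g_\varepsilon}\,\widetilde h$ is then integrable, so the Fourier inversion formula holds pointwise. Fubini in the $\eta_\perp$ integration is harmless, since $|\alpha\cdot\eta - i\varepsilon|\ge\varepsilon$ and $\widehat h$ is Schwartz.

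It then remains to compare with $T_\alpha h$. Subtracting the two expressions gives
\begin{equation*}
|T_\alpha h(x) - T_\alpha^\varepsilon h(x)| \le \int_{-\infty}^{x\cdot\alpha}\bigl(1-e^{-\varepsilon(x\cdot\alpha-\tau)}\bigr)\,|h(\tau\alpha+x_\perp)|\,d\tau .
\end{equation*}
For $x\in K$ we have $|x\cdot\alpha|\le R_K$ and $|x_\perp|\le R_K$. Using the Schwartz decay $|h(y)|\le C\langle y\rangle^{-2}$, the integrand is dominated by $C\langle\tau\rangle^{-2}$ uniformly in $x\in K$ and $\varepsilon$. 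The integrand also tends to $0$ pointwise, and in fact $1-e^{-\varepsilon(x\cdot\alpha-\tau)}\le \varepsilon(R_K+|\tau|)$.

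To get uniformity over $K$, we split the integral at $\tau=-L$. The tail over $\tau<-L$ is bounded by $C\int_{-\infty}^{-L}\langle\tau\rangle^{-2}\,d\tau$, which is small uniformly in $x$ and $\varepsilon$. The part over $-L\le\tau\le x\cdot\alpha$ is at most $C\,\varepsilon(R_K+L)^2$. Letting first $\varepsilon\to0$ and then $L\to\infty$ gives uniform convergence on $K$.

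The only delicate point is the identification of the multiplier $(\sigma-i\varepsilon)^{-1}$ with the causal kernel, and in particular its sign conventions. With $\widehat h(\eta)=\int e^{-iy\cdot\eta}h$, the computation above confirms that the pole in the lower half-plane yields the integral from $-\infty$, matching the definition of $T_\alpha$.
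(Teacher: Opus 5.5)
Your route is the same as the paper's. You take the partial Fourier transform in the $\alpha$-direction, identify the one-dimensional multiplier $(\sigma-i\varepsilon)^{-1}$ as convolution with a one-sided exponential, and compare pointwise with $T_\alpha h$.

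However, the kernel computation, which you yourself single out as the delicate point, is wrong as written. The identity $\frac{i}{\varepsilon-i\sigma}=\frac{1}{\sigma-i\varepsilon}$ is false: in fact $\frac{i}{\varepsilon-i\sigma}=-\frac{1}{\sigma+i\varepsilon}$. Moreover, convolving with your $g_\varepsilon(s)=i e^{\varepsilon s}\mathbf{1}_{\{s<0\}}$ gives $i\int_s^\infty e^{-\varepsilon(\tau-s)}h(\tau\alpha+x_\perp)\,d\tau$, which is the integral over the wrong half-line. The formula you then display is correct only because these two sign errors cancel. The correct kernel is the reflection $g_\varepsilon(s)=i e^{-\varepsilon s}\mathbf{1}_{\{s>0\}}$, which is the paper's $i e^{-\varepsilon x_1}\mathbb{H}(x_1)$. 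For it,
\[
\widehat{g_\varepsilon}(\sigma)=i\int_0^\infty e^{-(\varepsilon+i\sigma)s}\,ds=\frac{i}{\varepsilon+i\sigma}=\frac{1}{\sigma-i\varepsilon},
\qquad
(g_\varepsilon*h)(s)=i\int_{-\infty}^{s}e^{-\varepsilon(s-\tau)}h(\tau)\,d\tau .
\]
Relatedly, the pole of $(\sigma-i\varepsilon)^{-1}$ is at $\sigma=i\varepsilon$, in the upper half-plane, not the lower one. Closing the contour upward for $s>0$ is what makes the kernel supported in $s>0$. With this correction the rest of your argument goes through.

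For the final comparison you split at $\tau=-L$, use only $\langle\tau\rangle^{-2}$ decay, and obtain qualitative uniform convergence on $K$. The paper instead uses the stronger Schwartz decay $|h(\tau,x')|\le\|h\|_{\mathscr{S}_N}\langle\tau\rangle^{-N}$ with $N\ge 3$. Then $\int(C_K+|\tau|)\langle\tau\rangle^{-N}\,d\tau<\infty$, and one gets the explicit rate $C_{K,N}\,\varepsilon\,\|h\|_{\mathscr{S}_N}$ with no splitting parameter. Both suffice for the statement, since the lemma is only used to pass to $\varepsilon\to0^+$. The paper's version is shorter and quantitative.
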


\begin{proof}
We may assume $\alpha=e_1$ and write $x=(x_1,x')$, $\eta=(\eta_1,\eta')$.
Then
\begin{equation*}
    T_{e_1}^\varepsilon h(x)
  = \frac{1}{2\pi}\int_{\mathbb{R}}
    \frac{e^{ix_1\eta_1}}{\eta_1-i\varepsilon}
    \widetilde{h}(\eta_1,x') d\eta_1,
\end{equation*}
where
\begin{equation*}
    \widetilde{h}(\eta_1,x'):=\mathcal{F}_{\eta'}^{-1}[\widehat{h}(\eta_1,\cdot)](x')
=\mathcal{F}_{x_1}[h(\cdot,x')](\eta_1)
\end{equation*}
is the partial Fourier transform of $h$ in $x_1$ only. Since 
\begin{equation*}
    \mathcal{F}^{-1}_{\eta_1}[(\eta_1-i\varepsilon)^{-1}](x_1)
= i e^{-\varepsilon x_1} \mathbb{H}(x_1)
\end{equation*}
where $\mathbb{H}$ is the Heaviside function. Therefore, the convolution theorem implies
\begin{equation}\label{eq:expr_T_epsilon}
    T_{e_1}^\varepsilon h(x)
  = i\int_0^\infty e^{-\varepsilon s} h(x_1-s,x')ds
  = i\int_{-\infty}^{x_1} e^{-\varepsilon(x_1-\tau)} h(\tau,x')d\tau.
\end{equation}
Note that for $\tau \leq x_1$ and $x \in K$,
\begin{equation*}
    \left|e^{-\varepsilon(x_1 - \tau)} - 1\right|
    = 1 - e^{-\varepsilon(x_1 - \tau)}
    \leq
    \varepsilon(x_1 - \tau)
    \leq
    \varepsilon(C_K + |\tau|),
\end{equation*}
where $C_K = \sup_{x \in K} |x_1|$. Since $h \in \mathscr{S}(\mathbb{R}^n)$, for any $N \geq 3$ we have
\begin{equation*}
    |h(\tau, x')| \leq \|h\|_{\mathscr{S}_N}\,
\langle(\tau, x')\rangle^{-N}.
\end{equation*}
Therefore,
\begin{multline*}
    \left\|T_{e_1}^\varepsilon h
    - T_{e_1} h\right\|_{C^0(K)} \leq \sup_{x \in K}  \int_{-\infty}^{x_1} \left|e^{-\varepsilon(x_1-\tau)} - 1\right||h(\tau,x')|d\tau\\
    \leq
    \varepsilon \|h\|_{\mathscr{S}_N}
    \int_{\mathbb{R}}
    (C_K + |\tau|) \langle\tau\rangle^{-N} d\tau
    \leq
    C_{K,N} \varepsilon \|h\|_{\mathscr{S}_N},
\end{multline*}
where the integral converges since $N \geq 3$. Hence
\begin{equation*}
    \left\|T_{e_1}^\varepsilon h
    - T_{e_1} h\right\|_{C^0(K)}
    \to 0
    \qquad \text{as } \varepsilon \to 0^+.
\end{equation*}
This completes the proof.
\end{proof}

\begin{lemma}\label{lem:aux_sing}
Let $k> 1$.  Define
\begin{equation*}
    \sigma_k(\eta):=\eta_1+\frac{|\eta|^2}{2k},
  \qquad
  a_k(\eta):=\frac{|ke_1+\eta|+k}{2k},
\end{equation*}
and for $\theta,\vartheta\in[0,1]$ set
\begin{equation}\label{eq:aux_interp}
  \sigma_k^\vartheta(\eta):=\eta_1+\frac{\vartheta|\eta|^2}{2k},
  \qquad
  c_\theta(\eta):=\theta a_k(\eta)+(1-\theta).
\end{equation}
For $\varepsilon>0$, set
\begin{equation}\label{eq:aux_X}
  X_{\theta,\vartheta}(\eta)
  :=\sigma_k^\vartheta(\eta)-i\varepsilon c_\theta(\eta).
\end{equation}
Then there exist constant $C>0$, depending only on $n$, such that for all $k>1$,
$\theta,\vartheta\in[0,1]$, $\varepsilon\in(0,1]$,
and all $\psi\in\mathscr{S}(\mathbb{R}^n)$ supported in $\{|\eta|\leq k/2\}$:
\begin{align}
  \left|\int_{\mathbb{R}^n}\frac{\psi(\eta)}{X_{\theta,\vartheta}(\eta)}\,d\eta\right|
  &\leq C\|\psi\|_{\mathscr{S}_{n+4}},
  \label{eq:aux_inv1}\\
  \left|\int_{\mathbb{R}^n}\frac{\psi(\eta)}{X_{\theta,\vartheta}(\eta)^2}\,d\eta\right|
  &\leq C\|\psi\|_{\mathscr{S}_{n+5}}.
  \label{eq:aux_inv2}
\end{align}
\end{lemma}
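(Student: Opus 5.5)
The plan is to reduce both estimates to a one-dimensional principal-value type bound in the variable $s=\sigma_k^\vartheta(\eta)$, uniformly in all parameters. The support condition $|\eta|\le k/2$ is the source of uniformity. On that set $|ke_1+\eta|\in[k/2,3k/2]$, so $a_k(\eta)\in[3/4,5/4]$ and hence $c_\theta(\eta)\in[3/4,5/4]$ for every $\theta\in[0,1]$. Moreover $\partial_{\eta_1}\sigma_k^\vartheta=1+\vartheta\eta_1/k\in[1/2,3/2]$. Every derivative of $a_k$ of order $m\ge1$ is $O(k^{1-m}\cdot k^{-1})=O(k^{-1})$ on the support, and all derivatives of $\sigma_k^\vartheta$ of order $\geq 2$ are $O(1/k)$. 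All of this is uniform in $\theta,\vartheta$ and $k>1$.

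\emph{Step 1 (change of variables).} For fixed $\eta'$, the map $\eta_1\mapsto s=\sigma_k^\vartheta(\eta_1,\eta')$ is a diffeomorphism on $\{|\eta|\le k/2\}$ with Jacobian in $[1/2,3/2]$. After extending $\psi$ by zero, the integral in \eqref{eq:aux_inv1} becomes
\begin{equation*}
\int_{\mathbb R^{n-1}}\int_{\mathbb R}\frac{\Phi(s,\eta')}{s-i\varepsilon\tilde c(s,\eta')}\,ds\,d\eta',
\end{equation*}
where $\Phi=(\psi/\partial_1\sigma)\circ(\text{inverse})$ and $\tilde c=c_\theta\circ(\text{inverse})$. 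By the derivative bounds above, $\tilde c\in[3/4,5/4]$ and $|\partial_s\tilde c|\le C$. We also get $|\Phi|+|\partial_s\Phi|\le C\|\psi\|_{\mathscr S_{n+4}}\langle\eta'\rangle^{-(n+2)}\langle s\rangle^{-2}$. Here we use that $|\eta|$ is comparable to $|s|+|\eta'|$ up to constants on the support, which I will check using $|\eta|^2/(2k)\le|\eta|/4$.

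\emph{Step 2 (one-dimensional bound).} Write $1/(s-i\varepsilon\tilde c)=(s+i\varepsilon\tilde c)/(s^2+\varepsilon^2\tilde c^2)$.
\begin{itemize}
\item The imaginary part satisfies $\varepsilon\tilde c/(s^2+\varepsilon^2\tilde c^2)\le C\varepsilon/(s^2+\varepsilon^2/2)$. Its $s$-integral is bounded by an absolute constant, so this piece is controlled by $\sup|\Phi|$.
\item For the real part on $|s|>1$, the kernel is bounded by $1/|s|$, and the decay $\langle s\rangle^{-2}$ of $\Phi$ handles it.
\item For the real part on $|s|\le1$, I replace $\tilde c(s)$ by $c_0=\tilde c(0,\eta')$. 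The error is
\begin{equation*}
\frac{s\,\varepsilon^2(c_0^2-\tilde c^2)}{(s^2+\varepsilon^2\tilde c^2)(s^2+\varepsilon^2c_0^2)}=O\!\left(\frac{\varepsilon^2 s^2}{(s^2+\varepsilon^2)^2}\right)=O(1),
\end{equation*}
which is integrable on $[-1,1]$.
\item What remains is $\int_{-1}^1\Phi(s)\,s/(s^2+\varepsilon^2c_0^2)\,ds$. The kernel is odd, so this equals $\int_{-1}^1(\Phi(s)-\Phi(0))\,s/(s^2+\varepsilon^2c_0^2)\,ds$, which is bounded by $2\sup|\partial_s\Phi|$.
\end{itemize}
Integrating the resulting bound in $\eta'$ against $\langle\eta'\rangle^{-(n+2)}$ gives \eqref{eq:aux_inv1}.

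\emph{Step 3 (squared denominator).} Here I integrate by parts in $\eta_1$. We have $\partial_1X=\partial_1\sigma_k^\vartheta-i\varepsilon\theta\,\partial_1a_k$, and its modulus is $\ge1/2-C/k$. If needed I shrink to $\ge1/4$ by using $|\partial_1a_k|\le 1/(2k)\cdot$const and treating small $k$ via the same estimate. Then
\begin{equation*}
\frac{\psi}{X^2}=-\frac{\psi}{\partial_1X}\,\partial_1\!\left(\frac1X\right),
\end{equation*}
and integration by parts, with no boundary terms, gives $\int\partial_1(\psi/\partial_1X)\,X^{-1}\,d\eta$. The new numerator is Schwartz and supported in the same ball. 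Since $\partial_1^2X=O(1/k)$, its $\mathscr S_{n+4}$ norm is bounded by $C\|\psi\|_{\mathscr S_{n+5}}$. Applying \eqref{eq:aux_inv1} yields \eqref{eq:aux_inv2}.

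The main obstacle is Step 2. The imaginary part $\varepsilon\tilde c(s,\eta')$ of the denominator depends on $s$, which destroys exact oddness and exact Poisson-kernel structure. The freezing argument with the $O(\varepsilon^2s^2/(s^2+\varepsilon^2)^2)$ error is what I would try first. I also need to track carefully that all constants in the change of variables, in particular the lower bound on $\partial_1\sigma$ and the bounds on derivatives of the inverse map, are independent of $k,\theta,\vartheta,\varepsilon$.
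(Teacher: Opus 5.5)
Your proof is correct. For \eqref{eq:aux_inv2} it coincides with the paper's proof: one integration by parts via $X^{-2}=-(\partial_{\eta_1}X)^{-1}\partial_{\eta_1}(X^{-1})$, then \eqref{eq:aux_inv1} applied to $\partial_{\eta_1}[\psi/\partial_{\eta_1}X]$. For \eqref{eq:aux_inv1}, however, you take a genuinely different route.

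\textbf{How the two routes differ.} The paper neither changes variables nor uses any cancellation. It writes $1/X=\partial_{\eta_1}[\log X]/\partial_{\eta_1}X$ with the principal branch, which is legitimate since $\operatorname{Im}X<0$. It then integrates by parts once in $\eta_1$ and estimates in absolute value. The only input is $|\log X|\le|\log|\eta_1-\eta_1^*(\eta')||+C$, i.e.\ local integrability of a logarithmic singularity. The $\eta$-dependence of the imaginary part $\varepsilon c_\theta$ never has to be confronted, because that bound uses only $|\sigma_k^\vartheta|\le|X|\le|\sigma_k^\vartheta|+5/4$ and $|\arg X|<\pi$.

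You instead straighten the zero set via $s=\sigma_k^\vartheta$ and analyze $(s-i\varepsilon\tilde c)^{-1}$ directly. The imaginary part is handled by a Poisson-kernel bound, and the real part by odd-kernel cancellation after freezing $\tilde c$ at $s=0$. This makes the mechanism (Poisson kernel plus a Hilbert-transform-type cancellation) explicit, and it gives \eqref{eq:aux_inv1} without the restriction $\varepsilon\le 1$.

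The price is bookkeeping the paper avoids: Jacobian and derivative bounds for the inverse map, and a check that the freezing point is admissible. Note that $c_0=c_\theta(\eta_1^*(\eta'),\eta')$ is evaluated at a point that need not lie in the ball $\{|\eta|\le k/2\}$. You should record that $|\eta_1^*|\le|\eta'|^2/k\le k/4$. Hence this point, and the segment joining it to the support, lie in $\{|\eta_1|\le k/2,\ |\eta'|\le k/2\}$. There $|ke_1+\eta|\ge k/2$ still gives $c_\theta$ bounded above and below, $|\partial_{\eta_1}c_\theta|\le 1/(2k)$ and $\partial_{\eta_1}\sigma_k^\vartheta\ge 1/2$. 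Consequently $|\tilde c(s)-c_0|\le |s|/k$, which is exactly what your freezing-error estimate needs.

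\textbf{One repair in Step 3.} The triangle-inequality bound $|\partial_{\eta_1}X|\ge 1/2-C/k$ degenerates as $k\to1^+$, and ``treating small $k$ via the same estimate'' does not fix this. Instead, note that $\varepsilon$, $\theta$ and $\partial_{\eta_1}a_k$ are real. Hence $\operatorname{Re}\partial_{\eta_1}X=\partial_{\eta_1}\sigma_k^\vartheta\ge 1/2$, so $|\partial_{\eta_1}X|\ge 1/2$ uniformly in $k>1$, as in the paper. With this, your argument gives constants uniform in $k$, $\theta$, $\vartheta$ and $\varepsilon$, as claimed.
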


\begin{proof}
We write $X=X_{\theta,\vartheta}$. Next, we list certain properties of functions $\sigma_k^\vartheta$, $c_\theta$, and $X$ on $\{|\eta|\leq k/2\}$.

\textbf{Properties of $\sigma_k^\vartheta$.}
We compute
\begin{equation*}
    \partial_{\eta_1}\sigma_k^\vartheta = 1+ \frac{\vartheta\eta_1}{k},
\end{equation*}
so
\begin{equation}\label{eq:aux_sigma_bounds}
  \frac{1}{2}\leq \partial_{\eta_1}\sigma_k^\vartheta\le\frac{3}{2}
  \qquad
  \text{on }\{|\eta_1|\leq k/2\}.
\end{equation}
Moreover,
\begin{equation*}
    \partial_{\eta_i}\partial_{\eta_j}\sigma_k^\vartheta = \frac{\vartheta\delta_{ij}}{k} \leq \frac{1}{k}
\end{equation*}
and all derivatives of order greater than $2$ are zero. Hence,
\begin{equation}\label{eq:aux_sigma_allderivs}
  |\partial_\eta^\nu\sigma_k^\vartheta|\leq C_\nu
  \qquad\text{on }\{|\eta|\leq k/2\},\quad\text{for }|\nu|\geq 1,
\end{equation}
uniformly in $k> 1$ and $\vartheta\in[0,1]$.

For $\vartheta=0$, $\sigma_k^0=\eta_1$ has the unique zero $\eta_1^*=0$.
For $\vartheta>0$, the equation $\sigma_k^\vartheta(\eta_1,\eta')=0$
is quadratic in $\eta_1$ with two roots:
\begin{equation*}
    \eta_1^*=\frac{k}{\vartheta}\Bigl(-1+\sqrt{1-\tfrac{\vartheta^2|\eta'|^2}{k^2}}\Bigr),
  \qquad
  \eta_1^{**}=\frac{k}{\vartheta}\Bigl(-1-\sqrt{1-\tfrac{\vartheta^2|\eta'|^2}{k^2}}\Bigr).
\end{equation*}

Since $|\eta_1^{**}|\geq k/\vartheta\geq k$, we obtain $\eta_1^{**} \notin \{|\eta_1|\leq k/2\}$. For $\eta' \in \{|\eta'|\leq k/2\}$, we estimate
\begin{equation}\label{eq:aux_eta1star}
    |\eta_1^*|=\frac{k}{\vartheta}\Bigl(1-\sqrt{1-\tfrac{\vartheta^2|\eta'|^2}{k^2}}\Bigr) \leq \frac{\frac{\vartheta|\eta'|^2}{k}}{1+\sqrt{1-\tfrac{\vartheta^2|\eta'|^2}{k^2}}} \leq \frac{|\eta'|^2}{k} \leq \frac{k}{4} < \frac{k}{2}.
\end{equation}
Hence, $\eta_1^* \in \{|\eta_1|\leq k/2\}$. Using the mean value theorem and \eqref{eq:aux_sigma_bounds}, we obtain the two-sided bound:
\begin{equation}\label{eq:aux_sigma_two_sided}
  \tfrac{1}{2}|\eta_1-\eta_1^*|
  \le|\sigma_k^\vartheta(\eta_1,\eta')|
  \le\tfrac{3}{2}|\eta_1-\eta_1^*|,
  \qquad
  \text{for } \eta \in \left\{|\eta|<k/2\right\}.
\end{equation}
Note that in general $(\eta_1^*,\eta')$ is not in $\left\{|\eta|<k/2\right\}$.

\textbf{Properties of $c_\theta$.}
Note that 
\begin{equation*}
    \frac{k}{2}\leq |ke_1+\eta|\leq \frac{3k}{2},
    \qquad
    \text{on }
    \{|\eta|\leq k/2\}
\end{equation*}
Therefore, 
\begin{equation}\label{eq:aux_c_bounds}
  \frac{3}{4}\leq c_\theta\le\frac{5}{4}
  \qquad\text{on }\{|\eta|\leq k/2\}.
\end{equation}
Writing $\xi=ke_1+\eta$ with $|\xi|\geq k/2$, we have
$\partial_{\eta_j}a_k=\xi_j/(2k|\xi|)$, and each further $\eta$-derivative
brings a factor of $|\xi|^{-1}\leq 2/k$.
Hence
\begin{equation}\label{eq:aux_a_allderivs}
    |\partial_\eta^\nu a_k|\leq C_\nu k^{-|\nu|},
    \qquad
    \text{for }
    |\nu|\geq 1
\end{equation}
and therefore
\begin{equation}\label{eq:aux_c_allderivs}
  |\partial_\eta^\nu c_\theta|\leq C_\nu k^{-|\nu|}\leq C_\nu
  \qquad\text{on }\{|\eta|\leq k/2\},\quad|\nu|\geq 1,
\end{equation}
uniformly in $k> 1$ and $\theta\in[0,1]$.

\textbf{Properties of $X$.}
Due to \eqref{eq:aux_c_bounds},
\begin{equation}\label{eq:aux_Xnonzero}
  |X|= |\sigma_k^\vartheta-i\varepsilon c_\theta| \geq \varepsilon c_\theta\geq \frac{3}{4}\varepsilon>0,
  \qquad
  \text{on } \{|\eta|\leq k/2\},
\end{equation}
and hence, $X$ does not vanish on $\{|\eta|\leq k/2\}$.
Also $\operatorname{Im} X=-\varepsilon c_\theta<0$, so $X$ lies in the open
lower half-plane. By \eqref{eq:aux_sigma_bounds}
\begin{equation}\label{eq:aux_dX_lower}
    |\partial_{\eta_1}X|
    =\sqrt{(\partial_{\eta_1}\sigma_k^\vartheta)^2
    +\varepsilon^2(\partial_{\eta_1}c_\theta)^2}
    \geq |\partial_{\eta_1}\sigma_k^\vartheta|
    \geq\frac{1}{2}.
\end{equation}
Estimates \eqref{eq:aux_sigma_allderivs} and \eqref{eq:aux_c_allderivs} imply that
\begin{equation}\label{eq:aux_X_allderivs}
  |\partial_\eta^\nu X|\leq C_\nu
  \qquad\text{on }\{|\eta|\leq k/2\},\quad\text{for }|\nu|\geq 1,
\end{equation}
uniformly in $k> 1$, $\theta,\vartheta\in[0,1]$, and $\varepsilon\in(0,1]$.

Combining the last two estimates gives 
\begin{equation}\label{eq:aux_inv_dX_allderivs}
  \left|\partial_\eta^\nu\left[\frac{1}{\partial_{\eta_1}X}\right]\right|\leq C_\nu
  \qquad\text{on }\{|\eta|\leq k/2\},\quad\text{for all }\nu.
\end{equation}

\textbf{Proof of \eqref{eq:aux_inv1}.}
As we established, $X$ and $\partial_{\eta_1} X$ do not vanish on $\operatorname{supp} (\psi) \subset \{|\eta|\leq k/2\}$. Therefore, by using
\begin{equation*}
    \frac{1}{X} = \frac{\partial_{\eta_1}[\log X]}{\partial_{\eta_1} X}
\end{equation*}
and integrating by parts, we obtain
\begin{equation}\label{eq:aux_IBP}
  \int_{\mathbb{R}^n}\frac{\psi}{X}\,d\eta
  = -\int_{\mathbb{R}^n}\log X(\eta)\;\partial_{\eta_1}\!\left[
      \frac{\psi(\eta)}{\partial_{\eta_1}X(\eta)}
    \right]d\eta.
\end{equation}
Here $\log X$ is the principal-branch logarithm, well-defined since
$\operatorname{Im} X<0$ (so $\arg X\in(-\pi,0)$). Therefore,
\begin{equation}\label{eq:logX}
    |\log X| \leq |\log |X|| + |\arg X| \leq |\log |X|| + \pi.
\end{equation}

By the definition of $X$ and \eqref{eq:aux_c_bounds},
\begin{equation*}
    |\sigma_k^\vartheta| \leq |X| \leq |\sigma_k^\vartheta|  + \frac{5}{4}.
\end{equation*}
For $|X|\geq 1$, we estimate
\begin{equation*}
    \log |X| \leq \log \left(|\sigma_k^\vartheta|  + \frac{5}{4}\right) \leq 
    \begin{cases}
        \log\left(\frac{9}{4}|\sigma_k^\vartheta| \right) & |\sigma_k^\vartheta| \geq 1,\\
        \log\left(\frac{9}{4} \right) & |\sigma_k^\vartheta| \leq 1.\
    \end{cases}
\end{equation*}
For $|X|\leq 1$,
\begin{equation*}
    |\log |X|| = - \log |X|\leq - \log |\sigma_k^\vartheta| = |\log |\sigma_k^\vartheta||.
\end{equation*}
In both cases,
\begin{equation*}
    |\log |X|| \leq |\log |\sigma_k^\vartheta|| + \log\left(\frac{9}{4} \right).
\end{equation*}
Combining this with \eqref{eq:aux_sigma_two_sided} and \eqref{eq:logX}, we obtain
\begin{equation}\label{eq:aux_logX}
  |\log X(\eta)|\le|\log|\eta_1-\eta_1^*(\eta')||+C
\end{equation}
for some absolute constant $C$.

Next, we estimate the second term in the integral \eqref{eq:aux_IBP}. Set $\varphi(\eta):=\partial_{\eta_1}[\psi/\partial_{\eta_1}X]$.
Since $\psi$ is smooth and supported in $\{|\eta|\leq k/2\}$, \eqref{eq:aux_inv_dX_allderivs} implies that
\begin{equation}\label{eq:aux_phi_bound}
  \|\varphi\|_{\mathscr{S}_m}\leq C\|\psi\|_{\mathscr{S}_{m+1}}
\end{equation}
for every $m$, with $C=C(m,n)$.

Combining \eqref{eq:aux_IBP} and \eqref{eq:aux_logX} gives
\begin{equation*}
    \left|\int_{\mathbb{R}^n}\frac{\psi}{X} d\eta\right|
    \leq C\int_{\mathbb{R}^n}\left(1+|\log|\eta_1-\eta_1^*(\eta')||\right) |\varphi(\eta)| d\eta.
\end{equation*}
We split the inner $\eta_1$-integral at $|\eta_1-\eta_1^*|=1$:

\begin{multline}\label{eq:spl_int}
    \left|\int_{\mathbb{R}^n}\frac{\psi}{X} d\eta\right|
  \leq 
  C\int_{\mathbb{R}^n} |\varphi(\eta)| d\eta
  + C \int_{\mathbb{R}^{n-1}}\int_{|\eta_1 - \eta_1^*(\eta')| \leq 1} |\log|\eta_1-\eta_1^*(\eta')| |\varphi(\eta)| d\eta\\
  + C \int_{\mathbb{R}^{n-1}}\int_{|\eta_1 - \eta_1^*(\eta')| \geq 1} |\log|\eta_1-\eta_1^*(\eta')| |\varphi(\eta)| d\eta.
\end{multline}
Since $\varphi \in C_c^\infty(\mathbb{R}^n)$,
\begin{equation*}
    |\varphi(\eta)|\le\|\varphi\|_{\mathscr{S}_{p+1}}(1+|\eta|)^{-p}
    \qquad
    \text{for any } p\in \mathbb{N}.
\end{equation*}
Choosing $p = n + 2$, we estimate the first term of the right-hand side of \eqref{eq:spl_int}
\begin{equation*}
    \int_{\mathbb{R}^n} |\varphi(\eta)| d\eta \leq C \|\varphi\|_{\mathscr{S}_{n+3}}.
\end{equation*}
To estimate the second term, we note that
\begin{equation*}
    \int_{|\eta_1-\eta_1^*|\leq 1}\frac{|\log|\eta_1-\eta_1^*||}{(1+|\eta|)^p} d\eta_1
    \leq \frac{1}{(1+|\eta'|)^p}\int_{|s|\leq 1}|\log|s|| ds
    = \frac{2}{(1+|\eta'|)^p}.
\end{equation*}
Hence,
\begin{equation*}
    \int_{\mathbb{R}^{n-1}}\int_{|\eta_1 - \eta_1^*(\eta')| \leq 1} |\log|\eta_1-\eta_1^*(\eta')| |\varphi(\eta)| d\eta \leq  C \|\varphi\|_{\mathscr{S}_{n+3}}.
\end{equation*}
To estimate the last term of the of the right-hand side of \eqref{eq:spl_int}, by using \eqref{eq:aux_eta1star} and $\log|t|\le|t|$ for $|t|>1$, we obtain
\begin{equation*}
    \frac{|\log|\eta_1-\eta_1^*||}{(1+|\eta|)^p}
    \leq 
    \frac{|\eta_1-\eta_1^*|}{(1+|\eta|)^p}
    \leq
    \frac{|\eta_1|+|\eta_1^*|}{(1+|\eta|)^p}
    \leq
    \frac{C(1+|\eta|)}{(1+|\eta|)^p}=\frac{C}{(1+|\eta|)^{p-1}}.
\end{equation*}
Therefore,
\begin{equation*}
    \int_{\mathbb{R}^{n-1}}\int_{|\eta_1 - \eta_1^*(\eta')| \geq 1} |\log|\eta_1-\eta_1^*(\eta')| |\varphi(\eta)| d\eta \leq  C \|\varphi\|_{\mathscr{S}_{n+3}}.
\end{equation*}
By combining estimates for the three terms of the right-hand side of \eqref{eq:spl_int}, and then using \eqref{eq:aux_phi_bound}, 
\begin{equation*}
    \left|\int_{\mathbb{R}^n}\frac{\psi}{X}d\eta\right|
    \leq C\|\varphi\|_{\mathscr{S}_{n+3}}
    \leq C\|\psi\|_{\mathscr{S}_{n+4}}
\end{equation*}
This proves \eqref{eq:aux_inv1}.

\textbf{Proof of \eqref{eq:aux_inv2}.}
Since $X$ and $\partial_{\eta_1} X$ do not vanish on $\operatorname{supp} (\psi) \subset \{|\eta|\leq k/2\}$, we can write 
\begin{equation*}
    \frac{1}{X^2} = - \frac{1}{\partial_{\eta_1}X} \partial_{\eta_1} \left[\frac{1}{X}\right]
\end{equation*}
on $\{|\eta|\leq k/2\}$. Then, the integration by parts gives
\begin{equation*}
    \int_{\mathbb{R}^n}\frac{\psi}{X^2}d\eta
    = \int_{\mathbb{R}^n}\frac{1}{X}
    \partial_{\eta_1}\left[\frac{\psi}{\partial_{\eta_1}X}\right]d\eta = \int_{\mathbb{R}^n}\frac{\varphi}{X}
    d\eta.
\end{equation*}
 Recall that $\varphi$ is a smooth function supported in $\{|\eta|\leq k/2\}$. Therefore, applying \eqref{eq:aux_inv1} and \eqref{eq:aux_phi_bound}, we obtain 
\begin{equation*}
    \left|\int_{\mathbb{R}^n}\frac{\psi}{X^2} d\eta\right| 
    = \left|\int_{\mathbb{R}^n}\frac{\varphi}{X} d\eta\right| 
    \leq C\|\varphi\|_{\mathscr{S}_{n+4}} 
    \leq C\|\psi\|_{\mathscr{S}_{n+5}}.
\end{equation*}
This gives \eqref{eq:aux_inv2}.
\end{proof}

\subsection{Proof of Proposition~\ref{lem:GO_ptwise}}

\begin{proof}
Since $\partial_x^\beta$ corresponds to multiplication
by $(i\eta)^\beta$ in frequency, it suffices to prove the estimate
for $\beta=0$, replacing $h$ by $\partial^\beta h$.
We may assume $\alpha=e_1$ and write
$\eta=(\eta_1,\eta')\in\mathbb{R}\times\mathbb{R}^{n-1}$.
We use the notation $\sigma_k$, $a_k$, $\sigma_k^\vartheta$, $c_\theta$,
$X_{\theta,\vartheta}$ from Lemma~\ref{lem:aux_sing}.

Set
\begin{equation*}
    \phi_x(\eta):=e^{ix\cdot\eta}\widehat{h}(\eta).
\end{equation*}
Next, fix $\chi_0\in C_c^\infty(\mathbb{R}^n)$ such that
\begin{equation*}
    \chi_0=1 \quad \text{on } \{|\eta|\leq 1\},
\qquad
\chi_0=0 \quad \text{on } \{|\eta|\geq 2\},
\end{equation*}
and define
\begin{equation*}
    \chi_k(\eta):=\chi_0(4\eta/k).
\end{equation*}
Then
\begin{equation*}
    \chi_k=1 \quad \text{on } \{|\eta|\leq k/4\},
\qquad
\chi_k=0 \quad \text{on } \{|\eta|\geq k/2\}.
\end{equation*}

We decompose
\begin{equation*}
    \phi_x=\phi_x^{\mathrm{in}}+\phi_x^{\mathrm{out}},
\qquad
\phi_x^{\mathrm{in}}:=\chi_k\phi_x,
\qquad
\phi_x^{\mathrm{out}}:=(1-\chi_k)\phi_x.
\end{equation*}
We also define
\begin{equation*}
    h^{\mathrm{in}}:=\mathcal{F}^{-1}[\chi_k\widehat h],
\qquad
h^{\mathrm{out}}:=\mathcal{F}^{-1}[(1-\chi_k)\widehat h].
\end{equation*}
Then
\begin{equation*}
    h=h^{\mathrm{in}}+h^{\mathrm{out}},
\end{equation*}
and
\begin{equation*}
    \phi_x^{\mathrm{in}}(\eta)=e^{ix\cdot\eta}\widehat{h^{\mathrm{in}}}(\eta),
\qquad
\phi_x^{\mathrm{out}}(\eta)=e^{ix\cdot\eta}\widehat{h^{\mathrm{out}}}(\eta).
\end{equation*}
Next we decompose
\begin{equation*}
    \widetilde{R}_k h(x)-T_{e_1} h(x)
=
(\widetilde{R}_k h^{\mathrm{in}}(x)-T_{e_1} h^{\mathrm{in}}(x))
+
(\widetilde{R}_k h^{\mathrm{out}}(x)-T_{e_1} h^{\mathrm{out}}(x)),
\end{equation*}
and estimate the two terms on the right-hand side separately.

\textbf{Exterior contribution.}
Next we estimate 
\begin{equation*}
    \widetilde{R}_k h^{\mathrm{out}}(x) - T_{e_1} h^{\mathrm{out}}(x)
\end{equation*}
We now estimate $\widetilde{R}_k h^{\mathrm{out}}$.
By \eqref{eq:conj_identity} with $\varepsilon=0$,
\begin{equation*}
    \widetilde{R}_k h^{\mathrm{out}}(x)
=
e^{-ik e_1 \cdot x}\,
R_k^0\!\bigl(e^{ik e_1 \cdot (\cdot)} h^{\mathrm{out}}\bigr)(x).
\end{equation*}
We set $f := e^{ik e_1 \cdot (\cdot)} h^{\mathrm{out}}$, $l=[n/2]+1$, and $\delta=3/4$. By Corollary~\ref{cor:free_frac_Hl}, for $k\geq1$, we estimate
\begin{multline}\label{eq:Rk_out_Hl}
    \|\widetilde{R}_k h^{\mathrm{out}}\|_{H^l_{-\delta}}
    =
    \|e^{-ik e_1 \cdot x} R_k^0 f\|_{H^l_{-\delta}}
    \leq
    C_l k^l \|R_k^0 f\|_{H^l_{-\delta}}
    \\
    \leq
    C_l k^l \|f\|_{H^l_\delta}
    =
    C_l k^l
    \|e^{ik e_1 \cdot (\cdot)} h^{\mathrm{out}}\|_{H^l_\delta}
    \leq
    C_l k^{2l} \|h^{\mathrm{out}}\|_{H^l_\delta}.
\end{multline}
Let us bound $\|h^{\mathrm{out}}\|_{H^l_\delta}$. Since $(1-\Delta)^{l/2} h^{\mathrm{out}} \in \mathscr{S}(\mathbb{R}^n)$, for an integer $N>n/2 + \delta$, we estimate 
\begin{multline*}\label{eq:hout_Hldelta_start}
    \|h^{\mathrm{out}}\|_{H^l_\delta}^2
    =
    \int_{\mathbb{R}^n}
    \langle x \rangle^{2\delta}
    |(1-\Delta)^{l/2} h^{\mathrm{out}}(x)|^2 dx 
    \leq
    \int_{\mathbb{R}^n}
    \langle x \rangle^{2\delta} \langle x \rangle^{-2N}
    \|(1-\Delta)^{l/2} h^{\mathrm{out}}\|_{\mathscr{S}_N}^2 dx\\
    \leq C_{\delta,N} \|(1-\Delta)^{l/2} h^{\mathrm{out}}\|_{\mathscr{S}_N}^2.
\end{multline*}
Since $\mathcal{F}:\mathscr{S}\to\mathscr{S}$ is a topological isomorphism, there exist $C_N>0$ and $M\in \mathbb{N}$ such that 
\begin{multline*}
    \|h^{\mathrm{out}}\|_{H^l_\delta} \leq C_{\delta,N}\|(1-\Delta)^{l/2} h^{\mathrm{out}}\|_{\mathscr{S}_N} \leq C_{\delta,N}C_N \|\mathcal{F}\left[(1-\Delta)^{l/2} h^{\mathrm{out}}\right]\|_{\mathscr{S}_M} \\
    \leq C_{\delta,N} \|\langle\cdot\rangle^l (1-\chi_k(\cdot))\widehat{h}(\cdot)\|_{\mathscr{S}_M}.
\end{multline*}
Since $1 - \chi_k$ is supported in $\{|\eta|\geq k/4\}$ and $\langle\eta\rangle^{-1} \leq C/k$ on $\{|\eta|\geq k/4\}$, we conclude that 
\begin{equation*}
    \|h^{\mathrm{out}}\|_{H^l_\delta} \leq \frac{C_{\delta,N,L}}{k^L} \|\langle\cdot\rangle^{l+L} (1-\chi_k(\cdot))\widehat{h}(\cdot)\|_{\mathscr{S}_M}.
\end{equation*}
By the definition of $\chi_k$,
\begin{equation*}
    \|h^{\mathrm{out}}\|_{H^l_\delta} \leq \frac{C_{\delta,N,L}}{k^L} \|\widehat{h}\|_{\mathscr{S}_{M + l + L}}.
\end{equation*}
We put the last estimate into \eqref{eq:Rk_out_Hl} to obtain 
\begin{equation}\label{eq:Rh_leq_hath}
    \|\widetilde{R}_k h^{\mathrm{out}}\|_{H^l_{-\delta}} \leq \frac{C_{\delta,l}}{k^{L-2l}} \|\widehat{h}\|_{\mathscr{S}_{M + l + L}}.
\end{equation}
Since $\langle x\rangle^{-\delta}$ is bounded below on $K$,
the norms $H^l_{-\delta}(K)$ and $H^l(K)$ are equivalent. Therefore, by choosing $L=2l+1$, and using the Sobolev embedding theorem, we obtain
\begin{equation}\label{eq:Rk_out_ptwise}
    \sup_{x\in K} |\widetilde{R}_k h^{\mathrm{out}}(x)| \leq \frac{C_{K}}{k} \|\widehat{h}\|_{\mathscr{S}_{M}}.
\end{equation}
for some $M\in \mathbb{N}$.

Next we estimate $T_{e_1} h^{\mathrm{out}}$.
By Lemma~\ref{lem:transport},
\begin{equation*}
    T_{e_1} h^{\mathrm{out}}(x)
    =
    i \int_{-\infty}^{x_1} h^{\mathrm{out}}(\tau, x') d\tau.
\end{equation*}
Then, we estimate
\begin{equation}\label{T_e1_h_out}
    |T_{e_1}h^{\mathrm{out}}(x)|
    \leq
    \int_{\mathbb R}|h^{\mathrm{out}}(\tau,x')|d\tau 
    \leq
    \sup_{y\in\mathbb R^n}
    \langle y\rangle^2 |h^{\mathrm{out}}(y)|
    \int_{\mathbb R}
    \langle(\tau,x')\rangle^{-2}d\tau 
    \leq
    C\|h^{\mathrm{out}}\|_{\mathscr S_2}.
\end{equation}
Let
\begin{equation*}
    F_k(\eta) := (1-\chi_k(\eta))\widehat h(\eta),
    \qquad
    h^{\mathrm{out}} = \mathcal F^{-1}F_k .
\end{equation*}
For all multi-indices $\alpha,\beta$ with
$|\alpha|,|\beta|\leq 2$, we have
\begin{equation*}
    x^\alpha\partial_x^\beta h^{\mathrm{out}}(x)
    =
    (2\pi)^{-n}
    \int_{\mathbb R^n}
    e^{ix\cdot\eta}
    i^{-|\alpha|}
    \partial_\eta^\alpha\!\left[(i\eta)^\beta F_k(\eta)\right]
    \,d\eta .
\end{equation*}
Therefore,
\begin{equation*}
    \|h^{\mathrm{out}}\|_{\mathscr S_2}
    \leq
    C
    \sum_{|\alpha|,|\beta|\leq 2}
    \left\|
    \partial_\eta^\alpha\!\left[\eta^\beta(1-\chi_k)\widehat h\right]
    \right\|_{L^1(\mathbb R^n)} .
\end{equation*}
Since $1-\chi_k$ is supported in $\{|\eta|\geq k/4\}$, we have
\begin{equation}\label{supp_est}
    1\leq \frac{4|\eta|}{k}
    \leq \frac{4\langle\eta\rangle}{k}
    \qquad
    \text{on } \operatorname{supp}(1-\chi_k).
\end{equation}
Also, for every multi-index $\gamma$,
\begin{equation*}
    |\partial_\eta^\gamma \chi_k(\eta)|
    \leq
    C_\gamma k^{-|\gamma|}
    \leq C_\gamma,
    \qquad k\geq 1.
\end{equation*}
Using Leibniz' rule, we obtain, for $|\alpha|,|\beta|\leq 2$,
\begin{equation*}
    \left|
    \partial_\eta^\alpha\!\left[\eta^\beta(1-\chi_k)\widehat h\right]
    \right|
    \leq
    C
    \sum_{|\mu|\leq N_0}
    \langle\eta\rangle^{N_0}
    \left|\partial_\eta^\mu \widehat h(\eta)\right|
\end{equation*}
for some fixed integer $N_0$. Since $1-\chi_k$ and all its
derivatives are supported where $|\eta|\geq k/4$, \eqref{supp_est} implies
\begin{equation*}
    \langle\eta\rangle^{N_0}
    \leq
    \frac{C}{k}\langle\eta\rangle^{N_0+1}
    \qquad
    \text{on } \operatorname{supp}(1-\chi_k).
\end{equation*}
Thus, after increasing the Schwartz seminorm index, we get
\begin{equation*}
    \left\|
    \partial_\eta^\alpha\!\left[\eta^\beta(1-\chi_k)\widehat h\right]
    \right\|_{L^1(\mathbb R^n)}
    \leq
    \frac{C}{k}
    \|\widehat h\|_{\mathscr S_N},
    \qquad
    |\alpha|,|\beta|\leq 2.
\end{equation*}
Consequently,
\begin{equation*}
    \|h^{\mathrm{out}}\|_{\mathscr S_2}
    \leq
    \frac{C}{k}\|\widehat h\|_{\mathscr S_N},
\end{equation*}
and hence, \eqref{T_e1_h_out} gives
\begin{equation*}
\sup_{x \in K}
|T_{e_1} h^{\mathrm{out}}(x)|
\leq
\frac{C}{k} \|\widehat{h}\|_{\mathscr{S}_{M'}}.
\end{equation*}
Combining this with \eqref{eq:Rk_out_ptwise}, we conclude that
\begin{equation}\label{eq:exterior_final}
\sup_{x \in K}
\left|
\widetilde{R}_k h^{\mathrm{out}}(x)
-
T_{e_1} h^{\mathrm{out}}(x)
\right|
\le
\frac{C}{k}\,
\|\widehat{h}\|_{\mathscr{S}_{M'}}.
\end{equation}

\textbf{Interior contribution.} From the identity
\begin{equation*}
    (|ke_1+\eta|-k)(|ke_1+\eta|+k) = 2k\eta_1+|\eta|^2,
\end{equation*}
we obtain 
\begin{equation*}
    |ke_1+\eta|-k = \frac{2k\eta_1+|\eta|^2}{|ke_1+\eta|+k} = \frac{2k}{|ke_1+\eta|+k} \cdot \left( \eta_1 + \frac{|\eta|^2}{2k} \right) = \frac{\sigma_k(\eta)}{a_k(\eta)}
\end{equation*}
Therefore, for every $\varepsilon>0$,
\begin{equation}\label{eq:exact_factor}
  \frac{1}{|ke_1+\eta|-k-i\varepsilon}
  = \frac{c_1(\eta)}{X_{1,1}(\eta)}.
\end{equation}
Then, we write
\begin{align*}
   \widetilde{R}_{k,\varepsilon} h^{\mathrm{in}}(x)-T_{e_1}^\varepsilon h^{\mathrm{in}}(x)
  & =  \frac{1}{(2\pi)^n}\int_{\mathbb{R}^n} \left[   \frac{1}{|ke_1+\eta|-k-i\varepsilon} - \frac{1}{\eta_1 - i \varepsilon} \right]e^{ix\cdot\eta}\widehat{h^{\mathrm{in}}}(\eta) d\eta\\
    & = \frac{1}{(2\pi)^n}\int_{\mathbb{R}^n}
    \left[\frac{c_1(\eta)}{X_{1,1}(\eta)}
    -\frac{1}{X_{0,0}(\eta)}\right]\phi_x^{\mathrm{in}}(\eta) d\eta
\end{align*}
We decompose the bracket as $E_1+E_2+E_3$:
\begin{align*}
  E_1(\eta) &:= \frac{c_1(\eta)-1}{X_{1,1}(\eta)},\\
  E_2(\eta) &:= \frac{1}{X_{1,1}(\eta)} - \frac{1}{X_{0,1}(\eta)},\\
  E_3(\eta) &:= \frac{1}{X_{0,1}(\eta)} - \frac{1}{X_{0,0}(\eta)}.
\end{align*}
We estimate the pairing $\langle E_i, \phi_x^{\mathrm{in}} \rangle$
for $i = 1, 2, 3$, where
$\operatorname{supp}(\phi_x^{\mathrm{in}}) \subset \{|\eta| \leq k/2\}$.

Note that $c_1 = a_k$ by definition. Therefore, 
\begin{equation*}
    \int_{\mathbb{R}^n} E_1(\eta) \phi_x^{\mathrm{in}}(\eta) d\eta
    =
    \int_{\mathbb{R}^n} \frac{(a_k(\eta) - 1) \phi_x^{\mathrm{in}}(\eta)}{X_{1,1}(\eta)} d\eta.
\end{equation*}
Since $\phi_x^{\mathrm{in}}$ is smooth and supported in $\{|\eta| \leq k/2\}$, Lemma \ref{lem:aux_sing} implies that there exists $M \in \mathbb{N}$ such that
\begin{equation}\label{eq:E1_vs_aphi}
    \left|\int_{\mathbb{R}^n} E_1(\eta) \phi_x^{\mathrm{in}}(\eta) d\eta \right| 
    \leq 
    C \|(a_k - 1)\phi_x^{\mathrm{in}}\|_{\mathscr{S}_M}
\end{equation}
uniformly in $\varepsilon\in (0,1]$ and $k\geq 1$. Combining \eqref{eq:aux_a_allderivs} with
\begin{equation*}
    |a_k(\eta) - 1|
    =
    \frac{||ke_1 + \eta| - k|}{2k}
    \leq
    \frac{|\eta|}{2k}
    \qquad
    \text{on }
    \{|\eta| \leq k/2\},
\end{equation*}
we conclude that 
\begin{equation}\label{eq:a_k_phi_leq_phi}
    \|(a_k - 1) \phi_x^{\mathrm{in}}\|_{\mathscr{S}_M} \leq \frac{C}{k} \| \phi_x^{\mathrm{in}}\|_{\mathscr{S}_{M+1}}.
\end{equation}
Recall that
\begin{equation*}
    \phi_x^{\mathrm{in}}(\eta) = e^{ix\cdot\eta}\chi_k(\eta)\widehat h(\eta)
\end{equation*}
Since $|\partial_\eta^\nu \chi_k| < C/k^{|\nu|}$ and the polynomials, in $x$ variables, are bounded in $K$, we conclude that
\begin{equation}\label{eq:phi_in_vs_hat_h}
    \sup_{x \in K} \| \phi_x^{\mathrm{in}}\|_{\mathscr{S}_{M+1}} \leq C \|\widehat h\|_{\mathscr{S}_{M+1}} 
\end{equation}
Combining \eqref{eq:E1_vs_aphi}, \eqref{eq:a_k_phi_leq_phi}, and \eqref{eq:phi_in_vs_hat_h} gives
\begin{equation}\label{eq:E1}
     \sup_{x \in K}\left|\int_{\mathbb{R}^n} E_1(\eta) \phi_x^{\mathrm{in}}(\eta) d\eta \right| \leq \frac{C}{k} \|\widehat h\|_{\mathscr{S}_{M+1}} 
\end{equation}
uniformly in $\varepsilon \in (0,1]$ and $k\geq 1$.

Next, we estimate 
\begin{equation*}
    \langle E_2, \phi_x^{\mathrm{in}} \rangle = \int_{\mathbb{R}^n}
    E_2(\eta) \phi_x^{\mathrm{in}}(\eta) d\eta 
    = 
    \int_{\mathbb{R}^n}
    \left[\frac{1}{X_{1,1}(\eta)} - \frac{1}{X_{0,1}(\eta)}\right]\phi_x^{\mathrm{in}}(\eta) d\eta.
\end{equation*}
We use the integral identity
\begin{equation}\label{eq:resolvent_identity_param}
\frac{1}{A} - \frac{1}{B}
=
(B - A) \int_0^1 \frac{d\theta}{[\theta A + (1-\theta)B]^2}
\end{equation}
Take $A = X_{1,1}$ and $B = X_{0,1}$. Note that
\begin{equation*}
    B - A =  i\varepsilon (a_k - 1),
    \qquad
    \theta X_{1,1} + (1-\theta) X_{0,1} = X_{\theta,1},
\end{equation*}
for $\theta \in [0,1]$. Therefore, 
\begin{equation}\label{eq:E2_param}
    E_2(\eta) = i\varepsilon(a_k(\eta) - 1)
    \int_0^1 \frac{d\theta}{X_{\theta,1}(\eta)^2},
\end{equation}
and hence,
\begin{equation*}
    \int_{\mathbb{R}^n} E_2(\eta) \phi_x^{\mathrm{in}}(\eta) d\eta
    =
    \int_0^1 \left( \int_{\mathbb{R}^n} \frac{i\varepsilon(a_k(\eta)-1) \phi_x^{\mathrm{in}}(\eta)}
     {X_{\theta,1}(\eta)^2} d\eta
    \right) d\theta.
\end{equation*}
Since $\phi_x^{\mathrm{in}}$ is supported in $\{|\eta| \leq k/2\}$, Lemma \ref{lem:aux_sing} implies that 
\begin{equation*}
    \left|\int_{\mathbb{R}^n} \frac{i\varepsilon(a_k(\eta)-1) \phi_x^{\mathrm{in}}(\eta)}{X_{\theta,1}(\eta)^2} d\eta\right|
    \leq
    C \|i\varepsilon(a_k-1) \phi_x^{\mathrm{in}}\|_{\mathscr{S}_M}
    \leq
    C \|(a_k-1) \phi_x^{\mathrm{in}}\|_{\mathscr{S}_M}.
\end{equation*}
uniformly in $\varepsilon\in (0,1]$, $\theta\in [0,1]$, and $k\geq 1$. Integrating over $\theta \in [0,1]$ gives
\begin{equation*}
    \left| \int_{\mathbb{R}^n} E_2(\eta) \phi_x^{\mathrm{in}}(\eta) d\eta \right| \leq C \|(a_k-1) \phi_x^{\mathrm{in}}\|_{\mathscr{S}_M}
\end{equation*}
Then, \eqref{eq:a_k_phi_leq_phi} and \eqref{eq:phi_in_vs_hat_h} imply that
\begin{equation}\label{eq:E2}
     \sup_{x \in K}\left|\int_{\mathbb{R}^n} E_2(\eta) \phi_x^{\mathrm{in}}(\eta) d\eta \right| \leq \frac{C}{k} \|\widehat h\|_{\mathscr{S}_{M+1}} 
\end{equation}
uniformly in $\varepsilon \in (0,1]$ and $k\geq 1$.

Next, we estimate 
\begin{equation*}
    \langle E_3, \phi_x^{\mathrm{in}} \rangle = \int_{\mathbb{R}^n}
    E_3(\eta) \phi_x^{\mathrm{in}}(\eta) d\eta 
    = 
    \int_{\mathbb{R}^n}
    \left[\frac{1}{X_{0,1}(\eta)} - \frac{1}{X_{0,0}(\eta)}\right]\phi_x^{\mathrm{in}}(\eta) d\eta.
\end{equation*}
Take $A = X_{0,1}$, $B = X_{0,0}$ and note that
\begin{equation*}
    B - A = -\frac{|\eta|^2}{2k},
    \qquad
    \vartheta X_{0,1} + (1-\vartheta) X_{0,0} = X_{0,\vartheta}
\end{equation*}
for $\vartheta\in[0,1]$. Therefore, by \eqref{eq:resolvent_identity_param},
\begin{equation*}
    E_3(\eta) = -\frac{|\eta|^2}{2k}
    \int_0^1 \frac{d\vartheta}{X_{0,\vartheta}(\eta)^2}.
\end{equation*}
and hence,
\begin{equation*}
    \int_{\mathbb{R}^n} E_3(\eta) \phi_x^{\mathrm{in}}(\eta) d\eta
    =
    -\int_0^1 \left( \int_{\mathbb{R}^n} \frac{\frac{|\eta|^2}{2k} \phi_x^{\mathrm{in}}(\eta)} {X_{0,\vartheta}(\eta)^2} d\eta
\right) d\vartheta.
\end{equation*}
Since $\phi_x^{\mathrm{in}}$ is supported in $\{|\eta| \leq k/2\}$, Lemma \ref{lem:aux_sing} implies that 
\begin{equation*}
    \left|\int_{\mathbb{R}^n} \frac{\frac{|\eta|^2}{2k} \phi_x^{\mathrm{in}}(\eta)}{X_{0,\vartheta}(\eta)^2} d\eta\right|
    \leq
    \frac{C}{k} \||\cdot|^2\phi_x^{\mathrm{in}}(\cdot)\|_{\mathscr{S}_M} 
    \leq
    \frac{C}{k} \|\phi_x^{\mathrm{in}}(\cdot)\|_{\mathscr{S}_{M+2}}
\end{equation*}
uniformly in $\varepsilon\in (0,1]$, $\vartheta\in [0,1]$, and $k\geq 1$. Therefore,
\begin{equation*}
    \left|\int_{\mathbb{R}^n} E_3(\eta) \phi_x^{\mathrm{in}}(\eta) d\eta\right| \leq \frac{C}{k} \|\phi_x^{\mathrm{in}}(\cdot)\|_{\mathscr{S}_{M+2}}.
\end{equation*}
Then, \eqref{eq:phi_in_vs_hat_h} implies that
\begin{equation}\label{eq:E3}
     \sup_{x \in K}\left|\int_{\mathbb{R}^n} E_3(\eta) \phi_x^{\mathrm{in}}(\eta) d\eta \right| \leq \frac{C}{k} \|\widehat h\|_{\mathscr{S}_{M+3}}
\end{equation}
uniformly in $\varepsilon\in (0,1]$, $\vartheta\in [0,1]$, and $k\geq 1$.
Combining \eqref{eq:E1}, \eqref{eq:E2}, and \eqref{eq:E3} gives
\begin{equation}\label{eq:final_eps}
    \sup_{\varepsilon\in(0,1]} \sup_{x\in K}
  \left|
    \widetilde{R}_{k,\varepsilon} h^{\mathrm{in}}(x)
    - T_{e_1}^\varepsilon h^{\mathrm{in}}(x)
  \right|
  \leq \frac{C}{k}\|\widehat h\|_{\mathscr{S}_{M}}
\end{equation}
for some $M\in \mathbb{N}$, uniformly on $k> 1$.

It remains to pass from the $\varepsilon$-level estimate \eqref{eq:final_eps} to the estimate at $\varepsilon = 0$.
We write
\begin{multline*}
    \sup_{x \in K}\left|\widetilde{R}_k h^{\mathrm{in}}(x)
    -
    T_{e_1} h^{\mathrm{in}}(x) \right|
    \leq
    \left\|\widetilde{R}_k h^{\mathrm{in}}
    -
    \widetilde{R}_{k,\varepsilon} h^{\mathrm{in}} 
    \right\|_{C^0(K)} 
    + 
    \left\|T_{e_1} h^{\mathrm{in}}
    -
    T_{e_1}^\varepsilon h^{\mathrm{in}} 
    \right\|_{C^0(K)}\\
    + 
    \left\|\widetilde{R}_{k,\varepsilon} h^{\mathrm{in}}
    - T_{e_1}^\varepsilon h^{\mathrm{in}} \right\|_{C^0(K)}.
\end{multline*}
Then, by \eqref{eq:final_eps}, for $\varepsilon\in (0,1]$, we have
\begin{multline*}
    \sup_{x \in K}\left|\widetilde{R}_k h^{\mathrm{in}}(x)
    -
    T_{e_1} h^{\mathrm{in}}(x) \right|
    \leq
    \left\|\widetilde{R}_k h^{\mathrm{in}}
    -
    \widetilde{R}_{k,\varepsilon} h^{\mathrm{in}} 
    \right\|_{C^0(K)} 
    + 
    \left\|T_{e_1} h^{\mathrm{in}}
    -
    T_{e_1}^\varepsilon h^{\mathrm{in}} 
    \right\|_{C^0(K)}\\
    + 
    \frac{C}{k}\|\widehat h\|_{\mathscr{S}_{M}}.
\end{multline*}
For the first term, using \eqref{eq:amp_res_convergence} and Sobolev embedding, we obtain
\begin{equation*}
    \lim_{\varepsilon\to 0} \left\|\widetilde{R}_k h^{\mathrm{in}}
    -
    \widetilde{R}_{k,\varepsilon} h^{\mathrm{in}} 
    \right\|_{C^0(K)} =0.
\end{equation*}
By Lemma \ref{lem:transport}, the second term tends to zero as $\varepsilon\to 0^+$. Hence, taking $\varepsilon\to 0^+$ gives the following. 
\begin{equation}\label{eq:interior_final}
\sup_{x \in K}
\left|
\widetilde{R}_k h^{\mathrm{in}}(x)
-
T_{e_1} h^{\mathrm{in}}(x)
\right|
\le
\frac{C}{k}\,
\|\widehat{h}\|_{\mathscr{S}_{M}}.
\end{equation}
Combining this with \eqref{eq:exterior_final} we complete the proof.
\end{proof}

\section*{Data availability}
No data were used or generated in the research described in this article.

\section*{Declaration of competing interests}
The authors declare that they have no competing interests.

\section*{Declaration of generative AI use}
During the preparation of this work, the authors used Claude and ChatGPT to help simplify and verify certain minor calculations and to improve the clarity and readability of the writing. The authors reviewed and verified the AI-assisted content and take full responsibility for the manuscript.

\bibliographystyle{plain}
\bibliography{references_one_photon}

\setlength{\parskip}{0pt}

\end{document}